\begin{document}
\newcommand{\mono}[1]{%
\gdef\puA{#1}}
\newcommand{\puA}{}
\newcommand{\faculty}[1]{%
\gdef\puC{#1}}
\newcommand{\puC}{}
\newcommand{\facultad}[1]{%
\gdef\puD{#1}}
\newcommand{\puD}{}
\newcommand{\N}{\mathbb{N}}
\newcommand{\Z}{\mathbb{Z}}
\newcommand{\gl}{\mbox{\rm{gldim}}}
\newtheorem{teo}{Theorem}[section]
\newtheorem{prop}[teo]{Proposition}
\newtheorem{lema}[teo] {Lemma}
\newtheorem{ej}[teo]{Example}
\newtheorem{obs}[teo]{Remark}
\newtheorem{defi}[teo]{Definition}
\newtheorem{coro}[teo]{Corollary}
\newtheorem{nota}[teo]{Notation}
\def\fin{\mbox{\rm{findim}}}
\newcommand{\fidim}{\mbox{\rm{$\phi$dim}}}
\newcommand{\psidim}{\mbox{\rm{$\psi$dim}}}
\def\mod{\mbox{\rm{mod}}}
\def\add{\mbox{\rm{add}}}
\def\maxi{\mbox{\rm{max}}}
\def\pd{\mbox{\rm{pd}}}
\def\gd{\mbox{\rm{gldim}}}
\def\mini{\mbox{\rm{min}}}
\newenvironment{note}{\noindent Notation: \rm}
%\hyphenation(e-pi-mor-phism}
%\hyphenation(mo-no-mor-phism}

%\newenvironment{proof}{\noindent {\it\bf Proof: \newline}\rm}

%\predate{\begin{flushright}\large\scshape}
%\postdate{\par\end{flushright}}

\title[Igusa-Todorov for radical square]{Igusa-Todorov functions for radical square zero algebras\\
\it \small{Dedicated to Claude Cibils for his $60^{th}$ birthday}}

\author[Lanzilotta]{Marcelo Lanzilotta}
\address{Universidad de La Rep\'ublica,
Uruguay} \email{marclan@fing.edu.uy}

\author[Marcos]{Eduardo Marcos}
\address{Universidade de S\~ao Paulo\\
Brasil} \email{enmarcos@ime.usp.br}

\author[Mata]{Gustavo Mata}
\address{Universidad de La Rep\'ublica,
Uruguay} \email{gmata@fing.edu.uy}
\thanks{The third author was partially supported by a research grant of CNPq-Brazil, (bolsa de pesquisa),  also from a  grant from Fapesp-S\~ao Paulo, Brazil, and by MathAmSud, The first author had some visits to Brazil subsided by Proex IME-USP (CAPES), and by Fapesp}

\subjclass[2010]{Primary 16W50, 16E30. Secondary 16G10}
%\author{Marcelo Lanzilotta, Eduardo Marcos and Gustavo Mata}
%\date{}

%\begin{titlingpage}

%\maketitle

%\end{titlingpage}

\begin{abstract}{
\noindent In this paper we study the behaviour of the Igusa-Todorov functions for radical square zero algebras. We show that the left and the right $\phi$-dimensions coincide, in this case. Some general results are given, but we concentrate more in the radical square zero algebras. Our study is
based on two notions of  hearth and member of a quiver $Q$. We give some bounds for the $\phi$ and the $\psi$-dimensions and we describe the algebras for which the bound of $\psi$ is obtained. We also exhibit modules for which the $\phi$-dimension is realised.}
\end{abstract}
\keywords{Igusa-Todorov Functions, Radical Square Zero Algebras, Finitistic Dimension}
\maketitle

\section{Introduction}
One of the most important conjecture in the Representation Theory of Artin Algebras is the finitistic dimension, which states that the $\sup\{\pd(M): M$ is a finitely generated module of finite projective dimension$\}$  is finite. As an attempt to prove the conjecture Igusa and Todorov defined in  \cite{kn:igusatodorov}  two functions from the objects of $\mod A$ to the natural numbers, which generalises the notion of projective dimension, they are known, nowadays, as the Igusa-Todorov functions, $\phi$ and $\psi$.  One of its nicest features is that they are finite for each module, and they allow us to define the $\phi$-dimension and the $\psi$-dimension of an algebra. These are  new homological measures of the module category and in fact it holds that $\fin(A)\leq \phi\dim(A) \leq \psi \dim(A)\leq\gl(A)$ and they all coincide in the case of algebras with finite global dimension.\\
In \cite{kn:FLM} there are various relations of the $\phi$-dimension with the bifunctors $\mbox{\rm{ Ext}}( \cdot ,\cdot)$ and $\mbox{\rm{Tor}}(\cdot, \cdot)$, they also prove that the finiteness of the this dimension is invariant for derived equivalence.
 Recently various works were dedicated to study and generalise the properties of  these functions, see for instance
 \cite{kn:marchuard, kn:hlm1, kn:xu}, in particular in \cite{kn:xu} the Igusa-Todorov functions were defined for the derived category of an Artin Algebra.
 The calculation of the values  of these functions has not been done, up to know, for a large class of algebras. In this
 work we concentrate on the case of radical square algebras. Various notions related to this particular case are defined and worked out. We construct modules for which the $\phi$-dimension is realised. We also give bounds for
 $\phi$ and $\psi$ dimensions and discuss the cases where these bounds are obtained.
 We also give a complete characterisation of algebras with  maximal $\psi$-dimension.

\section{Preliminaries}

We start fixing some notation, $A$ will always denote an Artin algebra of  type $A = \frac{\mathbb{K} Q}{I}$ where $Q$ is a finite connected quiver, $I$ is an admissible ideal, and $\mathbb{K}$ a field, these algebras are called elementary, \cite{kn:ARS}. All modules will be finitely generated right modules. The category of finitely generated $A$ modules will be denoted by $\mod$A. We will denote by  $J$  the ideal generated by the arrows in $ \mathbb{K} Q$ and by $A_0 = \frac{\mathbb{K} Q}{J}$, which is  the sum of all simple modules, up isomorphism. Given an $A$-module $M$ we will denote its projective dimension by $\pd(M)$, and by $\Omega^n(M)$ and $\Omega^{-n }(M)$ the $n^{th}$ syzygy and $n^{th}$ cosyzygy of $M$ with respect to a minimal projective resolution and a minimal injective coresolution, respectively. We recall that the global dimension of $A$ which we will  denote by $\gd(A)$ is the supremum  of the set of projective dimensions of the $A$-modules,
 which is a natural number or infinite.  The finitistic dimension of $A$, denoted by $\fin(A)$  is the supremum of the set of projective dimensions of the $A$-modules with finite projective dimension.

\vspace{0.3cm}

\noindent We will divide the set of isomorphism classes of simple modules in three distinct sets which are the following:

\begin{itemize}
  \item $\mathcal{S} = \{S_1, \ldots, S_n\}$ denotes a complete set of simple $A$-modules, up to isomorphism.
  \item $\mathcal{S}_I$ the set of injective modules in  $\mathcal{S}$ .
  \item $\mathcal{S}_P$ the set of projective modules in $\mathcal{S}$.
  \item $\mathcal{S}_D = \mathcal{S}\setminus (\mathcal{S}_I \cup \mathcal{S}_P)$.
\end{itemize}

\begin{obs} \label{fgd}
Since we are considering only elementary algebras, in this work radical square zero algebra means algebras which are isomorphic to algebras of the type $A=\frac{\mathbb{K} Q}{J^2}$. For radical square zero algebras it holds that  $\displaystyle \Omega (S_i) = \bigoplus_{\alpha:i\rightarrow j} S_j$. That is $\Omega (S_i)$ is a direct sum  of simple modules and the number of summands isomorphic to $S_j$ coincides with the number of arrows starting in the vertex $i$ and ending at the vertex $j$. Using the last fact it is easy to compute that for a radical square zero algebra $A$ with
$n$ vertices and finite global dimension, its global dimension must be less or equal to $n-1$.
\end{obs}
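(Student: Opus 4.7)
The plan is to handle the two assertions separately. For the formula $\Omega(S_i) = \bigoplus_{\alpha:i\to j} S_j$, I would compute the first syzygy directly from the minimal projective presentation $0 \to \mathrm{rad}(P_i) \to P_i \to S_i \to 0$. Since $A = \mathbb{K} Q/J^2$, the radical $\mathrm{rad}(P_i) = e_i J$ is itself annihilated by $J$, so it is semisimple; taking as a basis the arrows with source $i$ and recalling that an arrow $\alpha:i\to j$ corresponds to a basis vector of type $S_j$, one obtains the claimed decomposition.

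For the bound on $\gd(A)$, I would encode the syzygy recursion as the right-action of the adjacency matrix $A_Q$ (with entry $(A_Q)_{ij}$ equal to the number of arrows from $i$ to $j$) on multiplicity vectors. By the first assertion, every $\Omega^k(S_i)$ with $k\geq 1$ is semisimple, hence determined by its multiplicity vector $m^k \in \mathbb{Z}^n$; iterating the minimal projective cover computation yields $m^{k+1} = m^k A_Q$, and therefore $m^k = e_i A_Q^k$ for all $k\geq 0$. Projective simples are exactly the sinks of $Q$ and therefore correspond to zero rows of $A_Q$, so they drop out of the recursion automatically --- this is the only small point that must be checked, but it is immediate. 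Consequently, $\pd(S_i)\leq d$ iff $\Omega^{d+1}(S_i)=0$ iff $e_i A_Q^{d+1}=0$, and taking the maximum over $i$ gives $\gd(A)\leq d$ iff $A_Q^{d+1}=0$.

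It follows that finiteness of $\gd(A)$ is equivalent to nilpotency of $A_Q$. Since $A_Q$ is an $n\times n$ matrix, every nilpotent $A_Q$ satisfies $A_Q^n = 0$ (by Cayley--Hamilton, the characteristic polynomial of a nilpotent matrix being $x^n$), and hence $\gd(A)\leq n-1$, as asserted. No step looks to present a serious obstacle; once the first claim is in place, everything reduces to standard linear algebra on nilpotent integer matrices.
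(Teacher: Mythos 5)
Your argument is correct and is precisely the computation the paper leaves implicit (the remark is stated without proof as "easy to compute"): the syzygy formula follows from $\mathrm{rad}(P_i)=e_iJ$ being semisimple when $J^2=0$, and the bound $\gd(A)\leq n-1$ follows because finiteness of the global dimension forces the adjacency matrix to be nilpotent, hence $A_Q^{\,n}=0$. Your adjacency-matrix formulation (equivalently, the observation that an acyclic quiver on $n$ vertices has no path of length $n$) is exactly in the spirit of how the paper uses $\overline{\Omega}$ and $M_Q$ later on, and the one delicate point --- that projective simples correspond to zero rows so the recursion $m^{k+1}=m^kA_Q$ remains valid --- is handled correctly.
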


\noindent The following subsection states some facts about rings given in triangular form, see \cite{kn:ARS} (Chapter III.2).

\subsection{Upper triangular matrices rings}

Given two rings $S$ and $T$ and a $S-T$-bimodule $M$, one constructs the  \textbf{upper triangular matrix ring} $A$, which is the set of matrices:

$$ \left\{ \left(
      \begin{array}{cc}
        s & m \\
        0 & t \\
      \end{array}
    \right): s\in S, \ m \in M \hbox{ and } t\in T
 \right\} $$

\noindent with the  usual addition and multiplication.

\begin{nota}
Given an algebra $A$ in the upper triangular form, as above,  we denote by  $\mathcal{P}(T)$ the set of isomorphism classes of indecomposable projective $T$ modules and by   $\{P_1 \ldots P_k\}$ the set of classes of indecomposable projective  $A$  modules which are not  $T$-modules.
\end{nota}

\begin{obs}\label{extensionconocida}(\cite{kn:ARS} Chapter III.2)

If $A$ is an upper triangular algebra, as above

\begin{itemize}
  \item  $\mod T \subset \mod A$ and $\mathcal{P}(A) = \mathcal{P}(T) \cup \{P_1 \ldots P_k\}$.
  \item Every projective $T$ resolution of a $T$-module $M$ is also a projective resolution of $M$ as $A$-module.

\end{itemize}

\end{obs}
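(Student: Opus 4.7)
The plan is to exploit the two orthogonal idempotents of $A$ corresponding to $1_{S}$ and $1_{T}$; call them $\varepsilon$ and $e$. They sum to $1_{A}$ and satisfy $\varepsilon A\varepsilon\cong S$, $eAe\cong T$, $\varepsilon A e\cong M$ and, crucially, $eA\varepsilon=0$. This last vanishing forces $I:=\varepsilon A$ to be a two-sided ideal with $A/I\cong T$ as rings, so the projection $\pi\colon A\twoheadrightarrow T$ yields an exact embedding $\mod T\hookrightarrow\mod A$ by restriction of scalars, giving the first half of the first bullet.

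For the decomposition of $\mathcal{P}(A)$, I would classify primitive idempotents: a complete set of primitive orthogonal idempotents of $A$ is the disjoint union of a set of primitive lifts from $S$, say $\varepsilon_{1},\ldots,\varepsilon_{k}$, and a set from $T$, say $e_{1},\ldots,e_{m}$. For $e_{j}\in T$ one has $e_{j}A\varepsilon\subseteq eA\varepsilon=0$, hence $e_{j}A=e_{j}Ae=e_{j}T$, which identifies the indecomposable projective $A$-module $e_{j}A$ with the corresponding indecomposable projective $T$-module viewed inside $\mod A$ through the embedding above. Setting $P_{i}:=\varepsilon_{i}A$ supplies the remaining $k$ indecomposable projectives (which fail to be $T$-modules because $P_{i}\varepsilon=\varepsilon_{i}S\neq 0$), whence $\mathcal{P}(A)=\mathcal{P}(T)\cup\{P_{1},\ldots,P_{k}\}$.

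For the second bullet, if $\cdots\to Q^{1}\to Q^{0}\to N\to 0$ is a projective $T$-resolution of $N\in\mod T$, then each $Q^{n}$ is a direct sum of summands of the form $e_{j}T=e_{j}A$, hence already projective in $\mod A$ by the previous step. The differentials are $T$-linear maps between modules annihilated by $\varepsilon$, so they are automatically $A$-linear, and exactness is detected on the underlying abelian groups and therefore survives the change of ring.

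I do not expect a genuine obstacle here, since the remark only records a well-known result from Chapter III.2 of \cite{kn:ARS}. The single delicate point is the asymmetry $eA\varepsilon=0$ versus $\varepsilon A e=M\neq 0$, which reflects the upper-triangular shape of $A$ and is precisely what makes $\varepsilon A$ (but not $eA$) a two-sided ideal; everything else is bookkeeping with idempotents.
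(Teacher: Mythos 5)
Your argument is correct. The paper offers no proof of this remark at all --- it simply cites \cite{kn:ARS}, Chapter III.2 --- so there is nothing internal to compare against; but your Peirce-decomposition route via the idempotents $\varepsilon=\left(\begin{smallmatrix}1&0\\0&0\end{smallmatrix}\right)$ and $e=\left(\begin{smallmatrix}0&0\\0&1\end{smallmatrix}\right)$ is the standard one and all the key verifications are in place: $eA\varepsilon=0$ makes $\varepsilon A=A\varepsilon A$ a two-sided ideal with $A/\varepsilon A\cong T$, so $\mod T$ sits inside $\mod A$ by restriction of scalars; $eA=eAe=T$ exhibits the projective $T$-modules as summands of $A_A$, hence projective over $A$ with unchanged endomorphism rings $e_jAe_j=e_jTe_j$ (so indecomposability is preserved); and $T$-linearity of the differentials is the same as $A$-linearity for modules killed by $\varepsilon A$, so exactness and projectivity of a $T$-resolution survive the change of rings. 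Where \cite{kn:ARS} phrases the same facts through the equivalence of $\mod A$ with a category of triples $(X,Y,f)$, your version is more elementary and self-contained; the one point worth flagging explicitly, since the paper works with right modules, is that it is precisely $\varepsilon A$ (and not $Ae$) that is the kernel of $A\twoheadrightarrow T$, which your observation about the asymmetry $eA\varepsilon=0$ versus $\varepsilon Ae=M$ already captures.
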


\section{ Igusa-Todorov functions}

In this section,  we show some general facts about the Igusa-Todorov functions, for an Artin algebra $A$. Our objective here is to introduce some material which we will use in the following sections, where we will treat the case of algebras with radical square zero.

\begin{defi}
Let $K_0$ denote the quotient of the free abelian group generated by one symbol  $[M]$,  for each isomorphism class of right finitely generated $A$-module, and  relations given by:
\begin{enumerate}
  \item $[M]-[M']-[M'']$ if  $M \cong M' \oplus M''$.
  \item $[P]$ for each projective.
\end{enumerate}
\end{defi}

\noindent The following observation will be used frequently.

\begin{obs}\label{positivecone}
It is clear that $K_0$ has basis the set of symbols $[M]$, one for each class of isomorphism of indecomposable not projective module. Moreover every element in $K_0$ can be written in the form $[M]-[N]$, for some pair of modules $M$ and $N$.
\end{obs}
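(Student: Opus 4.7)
The plan is to prove both assertions using Krull--Schmidt and the universal property of quotients of free abelian groups.

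For the spanning statement, I would start from the definition: $K_0$ is generated by all symbols $[M]$. By Krull--Schmidt every finitely generated $A$-module decomposes essentially uniquely as $M \cong \bigoplus_i M_i^{n_i}$ with each $M_i$ indecomposable, so iterating relation $(1)$ gives $[M] = \sum_i n_i [M_i]$ in $K_0$. Splitting the indecomposable summands into projective and non-projective ones and applying relation $(2)$ to kill the projective terms shows that the symbols $[M]$ with $M$ indecomposable non-projective already generate $K_0$.

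For linear independence, the clean way is to build a candidate group: let $F$ be the free abelian group on the set $\mathcal{I}$ of isomorphism classes of indecomposable non-projective modules, and define a map $\varphi$ from the free abelian group $\mathbb{Z}^{\mathrm{mod}\,A}$ on all isomorphism classes to $F$ by sending $[N]$ to $\sum_i n_i [N_i]$, where $N \cong \bigoplus_i N_i^{n_i}$ is its Krull--Schmidt decomposition with only the non-projective indecomposables kept. The relations $(1)$ and $(2)$ are both killed by $\varphi$ (the first because decomposition is compatible with direct sums up to reordering, the second because $\varphi([P])=0$), so $\varphi$ factors through $K_0 \to F$. Since this composition sends each basis element $[M]$ (with $M$ indecomposable non-projective) to itself, and the generators $[M]$ span $K_0$ by the previous step, the map $K_0 \to F$ is an isomorphism, yielding the required basis. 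The main obstacle is being careful about the well-definedness of $\varphi$, which requires the uniqueness part of Krull--Schmidt.

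For the second assertion, take an arbitrary $x \in K_0$. By the basis result just established, $x = \sum_i a_i [M_i] - \sum_j b_j [N_j]$ with $a_i, b_j \in \mathbb{N}$ and $M_i, N_j$ indecomposable non-projective. Setting
\[
M = \bigoplus_i M_i^{a_i}, \qquad N = \bigoplus_j N_j^{b_j},
\]
and applying relation $(1)$ in reverse gives $[M] = \sum_i a_i [M_i]$ and $[N] = \sum_j b_j [N_j]$, so $x = [M]-[N]$, as desired.
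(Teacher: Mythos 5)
Your proof is correct and is exactly the standard argument that the paper leaves implicit by calling the remark ``clear'': spanning via Krull--Schmidt and the two defining relations, independence via the induced map to the free abelian group on indecomposable non-projectives, and the $[M]-[N]$ form by separating positive and negative coefficients. No gaps; the only delicate point, well-definedness of $\varphi$ via the uniqueness in Krull--Schmidt, is the one you correctly flag.
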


\noindent Let $\overline{\Omega}: K_0 \rightarrow K_0$ be the group endomorphism induced by $\Omega$, and let   $K_i = \overline{\Omega}(K_{i-1})= \ldots = \overline{\Omega}^{i}(K_{0})$. Now if $M$ is a finitely generated $A$ module then $\langle add M\rangle$ denotes the subgroup of $K_0$ generated by the classes of indecomposable summands of $M$.

\begin{defi}\label{monomorfismo}
The \textbf{(right)  Igusa-Todorov function } of $M\in \mod A$  is defined as  $\phi_{r}(M) = min\{l:
\overline{\Omega}{|}_{{\overline{\Omega}}^{l+s}\langle add M\rangle}$ is a monomorphism for all $s \in \mathbb{N}\}$.\\
\end{defi}

\noindent In an analogous way, using the cosygyzy, we may define the left Igusa-Todorov function $\phi_{l}(M)$. Using  duality we can see that
$\phi_{l}(M)=\phi_{r}(D(M))$, where $D$ is the usual duality between mod-$A$ and mod-$A^{op}$. In case there is no possible misinterpretation we will use the notation $\phi$ for the right Igusa-Todorov function.

\noindent The next lemma is a  generalisation of Lemma 0.2 of \cite{kn:igusatodorov}.

\begin{lema}\label{cuentita}
 If $M = P\oplus M_1^{l_1}\oplus \cdots\  \oplus M_t^{l_t}$ where $P$ is projective, $M_i$ is not projective for each $i$ and also  $M_i \ncong M_j$ for $i\neq j$ then $\phi(M) = \phi(M_1\oplus\cdots\oplus M_t)$. \end{lema}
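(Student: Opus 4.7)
The plan is to show that the two modules $M$ and $M_1\oplus\cdots\oplus M_t$ give rise to \emph{exactly the same subgroup} $\langle\add(-)\rangle$ of $K_0$; since $\phi$ is defined purely in terms of the restriction of $\overline{\Omega}$ to iterates of this subgroup, the equality $\phi(M)=\phi(M_1\oplus\cdots\oplus M_t)$ will then be immediate from Definition \ref{monomorfismo}.

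First I would unpack $\langle\add M\rangle$. By definition, this is the subgroup of $K_0$ generated by the classes $[X]$ as $X$ ranges over the indecomposable direct summands of $M$. Two key observations from Remark \ref{positivecone} and the defining relations of $K_0$ are: (a) the class $[X]$ depends only on the isomorphism class of $X$, so multiplicities are irrelevant to the subgroup generated; and (b) for any projective $Q$ we have $[Q]=0$ in $K_0$, so every projective summand contributes the zero class.

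Applying these to $M=P\oplus M_1^{l_1}\oplus\cdots\oplus M_t^{l_t}$, the indecomposable summands are the (projective) indecomposable summands of $P$ together with the non-isomorphic non-projective modules $M_1,\dots,M_t$. Hence
\[
\langle\add M\rangle \;=\; \langle [M_1],\dots,[M_t]\rangle \;=\; \langle\add(M_1\oplus\cdots\oplus M_t)\rangle
\]
as subgroups of $K_0$. Consequently, for every $s\in\mathbb{N}$,
\[
\overline{\Omega}^{\,l+s}\langle\add M\rangle \;=\; \overline{\Omega}^{\,l+s}\langle\add(M_1\oplus\cdots\oplus M_t)\rangle,
\]
so the set of $l$ for which $\overline{\Omega}$ restricts to a monomorphism on all shifts is the same for both modules, and their minima coincide.

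There is no real obstacle here: the statement is a direct formal consequence of the fact that $K_0$ kills projectives and does not see multiplicities. The only point worth being careful about is distinguishing the module $M$ (where multiplicities $l_i$ matter) from its class in $K_0$ (where they do not), and noting that $\langle\add(-)\rangle$ is built from the latter.
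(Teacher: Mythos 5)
Your proof is correct and is essentially the argument the paper has in mind: the paper simply defers to Lemma 0.2 of Igusa--Todorov, which rests on the same observation that $\langle\add M\rangle$ is insensitive to projective summands (since $[Q]=0$ in $K_0$) and to multiplicities, so $\langle\add M\rangle=\langle\add(M_1\oplus\cdots\oplus M_t)\rangle$ and the two values of $\phi$ coincide by definition. Nothing further is needed.
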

\begin{proof}
The proof is analogous to the proof of the Lemma 0.2 \cite{kn:igusatodorov}.
\end{proof}

\noindent From now on when computing $\phi(M)$ we will assume that $M$ is basic and has no projective summand.
%In particular when we write $M = \oplus_{i=1}^k M_i$ is a decomposition of $M$ we assume that all the %summands are not isomorphic.

\begin{obs}\label{Omega}
Observe that if  $\Omega(M) \in add M$ then  $\overline{\Omega}(\langle addM\rangle) \subset \langle add M\rangle$.\\ We will consider next, various situations where we will
have a module $M$ such that $\overline{\Omega}(\langle addM\rangle) \subset \langle add M\rangle$. Observe that for radical square zero algebras this is the case for the semisimple module $A_0$.
\end{obs}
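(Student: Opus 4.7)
The plan is to reduce the inclusion $\overline{\Omega}(\langle \text{add}\, M \rangle) \subset \langle \text{add}\, M \rangle$ to a verification on a set of generators. By Remark \ref{positivecone}, the subgroup $\langle \text{add}\, M \rangle$ is generated, as an abelian group, by the symbols $[M_i]$ where $M_i$ runs over the indecomposable non-projective summands of $M$. So it suffices to check that $\overline{\Omega}([M_i]) \in \langle \text{add}\, M \rangle$ for every such $M_i$.

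Writing $M = P \oplus \bigoplus_{i=1}^{t} M_i^{n_i}$ with $P$ projective and the $M_i$ pairwise non-isomorphic indecomposable non-projective modules, each $M_i$ is a direct summand of $M$. Taking the minimal projective resolution of $M$ and using that it decomposes as the direct sum of the minimal projective resolutions of its summands, $\Omega(M_i)$ is a direct summand of $\Omega(M)$. By hypothesis $\Omega(M) \in \text{add}\, M$, so every indecomposable summand of $\Omega(M_i)$ is either projective (and contributes $0$ in $K_0$) or isomorphic to some $M_j$. Hence
\[
\overline{\Omega}([M_i]) \;=\; [\Omega(M_i)] \;=\; \sum_{j=1}^{t} a_{ij}\,[M_j]
\]
for some non-negative integers $a_{ij}$, which lies in $\langle \text{add}\, M \rangle$. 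This gives the desired inclusion.

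For the second assertion, I would invoke Remark \ref{fgd}: for a radical square zero algebra one has $\Omega(S_i) = \bigoplus_{\alpha: i \to j} S_j$, so $\Omega(S_i)$ is semisimple for every simple $S_i$. Hence $\Omega(A_0) = \bigoplus_i \Omega(S_i)$ is again a direct sum of simple modules, i.e.\ $\Omega(A_0) \in \text{add}\, A_0$, and the first part of the remark applies with $M = A_0$.

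I do not expect a real obstacle here: the statement is essentially a bookkeeping observation about syzygies of summands. The only point requiring a bit of care is to work with \emph{minimal} projective resolutions, so that the syzygy of a direct sum splits compatibly with the decomposition (up to projective summands, which vanish in $K_0$); this is exactly what makes $\overline{\Omega}$ well-defined on classes of indecomposables and what allows the reduction to generators above.
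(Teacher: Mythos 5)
Your argument is correct and is exactly the reasoning the paper leaves implicit (the remark is stated without proof as an observation): reduce to the generators $[M_i]$ of $\langle \add M\rangle$, note that $\Omega(M_i)$ is a summand of $\Omega(M)\in\add M$ so its class is a non-negative combination of the $[M_j]$, and for $A_0$ use that syzygies of simples are semisimple in the radical square zero case. Nothing is missing.
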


\begin{prop}\label{invariante}
Let  $M$ be an $A$-module such that  $\Omega(M) \in add M$, then the following statements are valid:
\begin{enumerate}
\item   $\phi(M) = min\{l\in \N: \overline{\Omega}{|}_{{\overline{\Omega}}^{l}\langle add
M\rangle }$ is injective$\}$.
\item $\phi (M) = min\{ l \in \N : rk (\overline{\Omega}^{l}{|}_{\langle add M\rangle} )= rk(\overline{\Omega}^{l+1}{|}_{\langle add M\rangle})\}$.
\item  If  $M = \oplus_{i=1}^k M_i$ is a decomposition of $M$ into indecomposable modules then  $\phi(M)\leq k$. Moreover if the equality holds then $ \pd(M) = k$.
\end{enumerate}
\end{prop}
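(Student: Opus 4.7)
The plan is to set $G := \langle \add M \rangle \subseteq K_0$ and study the descending chain of subgroups
$$ G \supseteq \overline{\Omega}(G) \supseteq \overline{\Omega}^2(G) \supseteq \cdots, $$
which exists thanks to the hypothesis $\Omega(M) \in \add M$ together with Remark~\ref{Omega}. Since $K_0$ is free abelian by Remark~\ref{positivecone}, each $\overline{\Omega}^l(G)$ is a finitely generated free abelian group; write $r_l := \mathrm{rk}(\overline{\Omega}^l|_{\langle \add M \rangle})$ for its rank.

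For part (1), the key observation is that $\overline{\Omega}^{l+s}(G) \subseteq \overline{\Omega}^l(G)$ for every $s \geq 0$, so if $\overline{\Omega}$ is injective on $\overline{\Omega}^l(G)$ then its restriction to any later term of the chain is injective as well. This collapses the ``for every $s \in \mathbb{N}$'' clause in Definition~\ref{monomorfismo} to the single condition $s = 0$. Part (2) then follows from part (1) via the standard rank criterion for homomorphisms of free abelian groups: $\overline{\Omega}|_{\overline{\Omega}^l(G)}$ is injective if and only if its image $\overline{\Omega}^{l+1}(G)$ has the same rank as its domain $\overline{\Omega}^l(G)$, that is, if and only if $r_{l+1}=r_l$.

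For part (3), I would invoke the standing convention adopted after Lemma~\ref{cuentita} that $M$ is basic without projective summand, so that the classes $[M_1],\ldots,[M_k]$ form part of the basis of $K_0$ and $r_0 = k$. Because the sequence $\{r_l\}$ is weakly decreasing in $\{0,1,\ldots,k\}$, at most $k$ strict drops can occur; combined with part~(2) this yields $\phi(M)\leq k$. Assume now $\phi(M)=k$, so $r_0 > r_1 > \cdots > r_k = r_{k+1}$; since $r_0 = k$ and each strict decrease is at least $1$, one must have $r_l = k - l$. In particular $r_k = 0$ gives $\overline{\Omega}^k[M_i] = 0$ in $K_0$ for every $i$, and because syzygies of minimal resolutions have no projective summand this forces $\Omega^k(M_i) = 0$, i.e., $\pd(M_i) \leq k$. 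On the other hand $r_{k-1} = 1 \neq 0$ guarantees $\overline{\Omega}^{k-1}[M_i] \neq 0$ for some $i$, so $\Omega^{k-1}(M_i)$ has a non-projective summand and $\pd(M_i) \geq k$. Hence $\pd(M) = \max_i \pd(M_i) = k$.

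The main obstacle I expect is precisely this last translation between $K_0$-vanishing and genuine projectivity of syzygies: one needs to keep firmly in mind the minimality conventions on $\Omega$ and on $M$ so that $[\Omega^l(N)] = 0$ in $K_0$ can be identified with $\Omega^l(N) = 0$ and hence with $\pd(N) \leq l$. Everything else reduces to elementary linear algebra over $\mathbb{Z}$.
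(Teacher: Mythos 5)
Your proof is correct and follows essentially the same route as the paper: the descending chain $\overline{\Omega}^{l}\langle \add M\rangle \supseteq \overline{\Omega}^{l+1}\langle \add M\rangle$ collapses the ``for all $s$'' clause in Definition~\ref{monomorfismo}, the rank criterion for maps of free abelian groups gives part (2), and counting strict rank drops starting from $r_0=k$ gives part (3). One small correction: $\overline{\Omega}^{k}[M_i]=0$ in $K_0$ only forces $\Omega^{k}(M_i)$ to be \emph{projective}, not zero (minimal syzygies can have projective summands, e.g.\ when the radical of a projective cover is projective), but this still yields $\pd(M_i)\leq k$, so your argument stands.
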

\begin{proof}
The proof of the first statement is given by contradiction. Assume that there is an element
$x \in {\overline{\Omega}}^{l+m}(\langle add M\rangle)$,  and $m\in \N$ with
$\overline{\Omega}(x) = 0$. Using  Remark \ref{Omega}, we  see that  $\overline{\Omega}^{k}(\langle addM\rangle) \subset \langle add M\rangle$, for all $k \in \mathbb{N}$. From this we see that  $x \in {\overline{\Omega}}^{l}(\langle
add M\rangle)$ which implies that  $\overline{\Omega}{|}_{{\overline{\Omega}}^{l}\langle add M\rangle}$ is not injective and we get a contradiction.
Therefore, $\overline{\Omega}{|}_{{\overline{\Omega}}^{l+m}\langle add M\rangle}$ is injective for all  $m \in
\mathbb{N}$.\\
It is clear that $rk (\overline{\Omega}^{k}{|}_{\langle add M\rangle} )= rk(\overline{\Omega}^{k+1}{|}_{\langle add M\rangle})$  if and only if   $\overline{\Omega}{|}_{{\overline{\Omega}}^{k}\langle add M\rangle }$ is injective, and this shows the second statement.\\
%\begin{obs}\label{lineal}
%If we consider the $\mathbb{C}$ transformation  $\overline{\Omega}{|}_{\langle add M\rangle}\otimes_{\mathbb{Z}} \mathbb{C}: \mathbb{C}^{rk(\langle add M\rangle)}\rightarrow %\mathbb{C}^{rk(\langle add M\rangle)}$ we get  $rk (\overline{\Omega}{|}_{\langle add M\rangle}\otimes_{\mathbb{Z}} \mathbb{C}) = rk(\overline{\Omega}{|}_{\langle add M\rangle})$.
%\end{obs}
\noindent Now let us show the last statement.
The rank of the abelian group $\langle add M\rangle$ is less or equal to $k$. Therefore if  $\overline{\Omega}|_{\langle add M\rangle}$ is not an isomorphism we get that $rk(\overline{\Omega}(\langle add M\rangle)) \leq k-1$.\\
%\begin{coro}
%Let $N=\oplus_{i=1}^k N_i$ be an $A$-module, with each $N_i$ indecomposable, assume that $\Omega(N) \in add N$.
%Let $M$ be an $A$-module such that  $\Omega(M) \in add N $. Then there exists $u\in \mathbb{N}$ such that  $N^u \cong M \oplus $M'$, in particular $\phi(M)\leq \phi (N)\leq k$.
%%
%%\end{coro}
\noindent We need the following claim:\\
\noindent \emph{ Claim: } $rk(\overline{\Omega}^s(\langle add M\rangle))= k-s$ for $0 \leq s \leq \phi(M)$.\\
Observe that  $rk(\langle addM\rangle) = k$, therefore  $rk(\overline{\Omega}^s(\langle add M\rangle))\leq k-s$, otherwise there would exist  $0 \leq s_0 < s$ such that  $\overline{\Omega}|_{\overline{\Omega}^{s_0}(\langle add M\rangle)}$ would be a monomorphism, then accordingly with item 1, $\phi (M) \leq s_0 < s\leq\phi(M)$, a contradiction.\\
\noindent The second part of the third statement is obtained by taking  $s=k$ in the previous claim.
\end{proof}

\begin{prop}Let $M$ be an $A$-module with $\Omega^{n}(M) \cong M$, for some $n\in \mathbb{N^*}$. Assume moreover that all indecomposable summand of $M$ have projective dimension bigger than $n$, then, $\phi(M) = 0$, in particular all direct summands of $M$ have infinite projective dimension.

\begin{proof}
Since $\Omega^{n}(M) \cong M$ we have that:

\begin{enumerate}
  \item If $\Omega^n(N)=0$, for some $N$ indecomposable summand of $M$ then its projective dimension is less or equal $n$, which contradicts our hypothesis.
  \item If  there is an indecomposable summand $N$ of $M$ such that $\Omega^n(N)$ is not indecomposable, then there is another indecomposable summand $N'$ of $M$ such that  $\Omega^n(N')=0$, which also contradicts our hypothesis.
\end{enumerate}

\noindent From the above we get that $\overline{\Omega}^{n}|_{\langle add M\rangle}: \langle add M\rangle \rightarrow \langle add M\rangle$ is an isomorphism, which implies the result.
\end{proof}
\end{prop}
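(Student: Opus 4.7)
The plan is to use the periodicity $\Omega^n(M) \cong M$ to show that $\Omega^n$ induces a permutation on the isomorphism classes of indecomposable summands of $M$; this makes $\overline{\Omega}^n|_{\langle \add M \rangle}$ an automorphism, from which $\phi(M) = 0$ will follow by unwinding the definition. Write $M \cong \bigoplus_{i=1}^{k} M_i^{a_i}$ with the $M_i$ pairwise non-isomorphic indecomposables and $a_i \geq 1$. Since $\pd(M_i) > n$, each $\Omega^n(M_i)$ is non-zero, and the isomorphism $\bigoplus_i \Omega^n(M_i)^{a_i} \cong \Omega^n(M) \cong M$ combined with Krull--Schmidt forces every indecomposable summand of each $\Omega^n(M_i)$ to be isomorphic to some $M_j$ (in particular no projective summand can appear, since none of the $M_j$ are projective).

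Denote by $r_i \geq 1$ the number of indecomposable summands of $\Omega^n(M_i)$. Counting indecomposables on both sides of $M \cong \Omega^n(M)$ yields $\sum_i a_i r_i = \sum_i a_i$, which, together with $r_i \geq 1$, forces $r_i = 1$ for every $i$. Hence $\Omega^n(M_i) \cong M_{\sigma(i)}$ for a well-defined map $\sigma : \{1, \dots, k\} \to \{1, \dots, k\}$, and matching multiplicities of $M_j$ on both sides gives $\sum_{i \in \sigma^{-1}(j)} a_i = a_j$; a non-surjective $\sigma$ would therefore force $a_j = 0$ for some $j$, contradicting $a_j \geq 1$, so $\sigma$ is a permutation.

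Consequently $\overline{\Omega}^n$ permutes the basis $\{[M_1], \dots, [M_k]\}$ of $\langle \add M \rangle$ via $\sigma$, and so $\overline{\Omega}^n|_{\langle \add M \rangle}$ is an automorphism; in particular every iterate $\overline{\Omega}^{tn}|_{\langle \add M \rangle}$ is injective. To conclude $\phi(M) = 0$ directly from Definition \ref{monomorfismo}, suppose $\overline{\Omega}(x) = 0$ with $x = \overline{\Omega}^s(y)$ for some $y \in \langle \add M \rangle$ and $s \geq 0$; choosing $t$ with $tn \geq s+1$ and applying $\overline{\Omega}^{tn-s-1}$ to the identity $\overline{\Omega}^{s+1}(y) = 0$ yields $\overline{\Omega}^{tn}(y) = 0$, whence $y = 0$ and hence $x = 0$. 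The ``in particular'' clause is then immediate, as $\Omega^{tn}(M_i) \cong M_{\sigma^t(i)}$ is non-zero for every $t \geq 0$. The main obstacle is the counting step: \emph{a priori}, each $\Omega^n(M_i)$ could either vanish or split into several indecomposables, and both failure modes must be excluded, using respectively the hypothesis $\pd(M_i) > n$ and the rigidity of $M \cong \Omega^n(M)$. A minor subtlety is that Proposition \ref{invariante} does not apply directly, since here one only knows $\Omega^n(M) \in \add M$ rather than $\Omega(M) \in \add M$, so the injectivity required for $\phi(M) = 0$ has to be extracted from the definition as above rather than from the rank characterisation of item $(2)$.
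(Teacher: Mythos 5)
Your proof is correct and takes essentially the same route as the paper's: the hypothesis $\pd > n$ rules out vanishing $n$-th syzygies, a Krull--Schmidt counting argument shows $\Omega^n$ must send each indecomposable summand to a single indecomposable summand and permute them, and hence $\overline{\Omega}^{n}|_{\langle \add M\rangle}$ is an automorphism, forcing $\phi(M)=0$. You simply make explicit two steps the paper leaves implicit, namely the count $\sum_i a_i r_i=\sum_i a_i$ behind item (2) and the derivation of $\phi(M)=0$ directly from the definition of $\phi$ (correctly observing that Proposition \ref{invariante} does not apply verbatim since only $\Omega^n(M)\in\add M$ is known).
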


\begin{defi} The \textbf{$\phi$-dimension} of an algebra $A$ is defined as follows \\ 

$$\fidim(A) = \sup\{\phi(M): M\in modA\}.$$
\end{defi}

\begin{prop}
If $A$ is an Artin algebra, then $\fidim(A) = \min \{l:\overline{\Omega}{|}_{K_l}$ is injective$\} $.
\end{prop}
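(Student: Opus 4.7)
The plan is to prove the two inequalities $\fidim(A) \leq l_0$ and $l_0 \leq \fidim(A)$, where I write $l_0 = \min\{l : \overline{\Omega}|_{K_l} \text{ is injective}\}$, with the convention $\min \emptyset = \infty$. The crucial structural observation is that the chain $K_l = \overline{\Omega}^l(K_0)$ is descending: since $K_1 = \overline{\Omega}(K_0) \subseteq K_0$, applying $\overline{\Omega}^l$ yields $K_{l+1} \subseteq K_l$ for every $l \in \N$. Consequently, once $\overline{\Omega}|_{K_{l_0}}$ is injective, the restriction $\overline{\Omega}|_{K_l}$ is injective for every $l \geq l_0$ as well, simply because $K_l \subseteq K_{l_0}$.

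For $\fidim(A) \leq l_0$ (assuming $l_0 < \infty$), I fix an arbitrary module $M$ and $s \in \N$. Since $\langle \add M\rangle \subseteq K_0$, I have the inclusions
$$\overline{\Omega}^{l_0+s}\langle \add M\rangle \subseteq K_{l_0+s} \subseteq K_{l_0},$$
and injectivity of $\overline{\Omega}$ on $K_{l_0}$ restricts to injectivity on this smaller subgroup. By Definition \ref{monomorfismo} this means $\phi(M) \leq l_0$, and taking the supremum over $M$ gives $\fidim(A) \leq l_0$.

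For the reverse inequality, suppose $d \geq \fidim(A)$ is finite; I show $\overline{\Omega}|_{K_d}$ is injective, which forces $l_0 \leq d$ and hence $l_0 \leq \fidim(A)$. Take $x \in K_d$ with $\overline{\Omega}(x) = 0$ and write $x = \overline{\Omega}^d(y)$ for some $y \in K_0$. By Remark \ref{positivecone}, $y = [M] - [N]$ for suitable modules $M, N$. Both $[M]$ and $[N]$ lie in $\langle \add(M \oplus N)\rangle$, so $y \in \langle \add(M \oplus N)\rangle$ and therefore $x \in \overline{\Omega}^d\langle \add(M \oplus N)\rangle$. Since $\phi(M \oplus N) \leq \fidim(A) \leq d$, the map $\overline{\Omega}$ is a monomorphism on $\overline{\Omega}^d\langle \add(M \oplus N)\rangle$, so $x = 0$. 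The case $\fidim(A) = \infty$ follows by contrapositive of the upper bound argument: if some $l_0$ made $\overline{\Omega}|_{K_{l_0}}$ injective, the upper bound would force $\fidim(A) \leq l_0 < \infty$, a contradiction.

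I do not foresee real obstacles; the argument is essentially bookkeeping between the ``global'' subgroups $K_l$ and the ``local'' subgroups $\overline{\Omega}^l\langle \add M\rangle$ attached to individual modules. The one point that needs care is recognising that every element of $K_l$ descends from some $y = [M] - [N]$ in $K_0$, which turns the abstract injectivity question on $K_l$ into the concrete $\phi$-definition applied to $M \oplus N$; this is exactly what Remark \ref{positivecone} supplies. Together with the descending-chain observation, it converts the universal quantifier over $s$ in Definition \ref{monomorfismo} into the single condition of injectivity on $K_{l_0}$.
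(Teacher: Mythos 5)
Your proof is correct and follows essentially the same route as the paper's: both inequalities are obtained by passing between the global subgroups $K_l$ and the local subgroups $\overline{\Omega}^l\langle \add M\rangle$, using that every element of $K_0$ has the form $[M]-[N]$ (Remark \ref{positivecone}) and that the chain $(K_l)$ is descending. Your write-up is in fact somewhat more explicit than the paper's on the descending-chain point and on the $\fidim(A)=\infty$ case, but the underlying argument is the same.
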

\begin{proof}

\noindent If $\overline{\Omega}{|}_{K_i}: K_i \rightarrow K_{i+1}$ is not injective then  there are $N_1, N_2 \in \mod A$ such that  $[N_1]-[N_2] \in K_i$ is not zero and $\overline{\Omega}([N_1]-[N_2]) = 0$. Let  $M \in \mod A$ such that  $M_1, M_2 \in \add M$ and $\overline{\Omega}^{i}([M_1]-[M_2])= [N_1]-[N_2]$. It follows that  $\phi(M) \geq i+1$ since $\overline{\Omega} {|}_{\overline{\Omega}^i{(\langle \add M\rangle)}}$ is not an isomorphism, therefore $\phi \dim(A) \geq i+1$.  In particular one sees that $\phi \dim(A) \geq \maxi \{l:\overline{\Omega}{|}_{K_l}$ is not injective $\} + 1$.\\
\noindent If  $\overline{\Omega}{|}_{K_h}: K_h \rightarrow K_{h+1}$ is injective then $\overline{\Omega}{|}_{K_{h+j}}: K_{h+j} \rightarrow K_{h+j+1}$ is injective for all $j>0$.
From this we get that $\phi \dim (A) \leq h$ in particular $\phi \dim(A) \leq \mini \{l:\overline{\Omega}{|}_{K_l}$ is injective$\}$.\\
\noindent Finally since  $\maxi \{l:\overline{\Omega}{|}_{K_l}$ is not injective $\} + 1 = \mini \{l:\overline{\Omega}{|}_{K_l}$ is injective$\}$, the result follows.
\end{proof}

\begin{prop}\label{epimorfismo}
If $\overline{\Omega}^{k}|_{\langle add M\rangle}: \langle add M\rangle \rightarrow \langle add \Omega^{k}(M)\rangle$ is an epimorphism and  $rk(\langle \overline{\Omega}^{k} (add M) \rangle ) < rk(\langle \overline{\Omega}^{k-1}(add M) \rangle )$ then  $\phi(M) = \phi(\Omega^{k}(M))+k$.

\begin{proof}
Since $\overline{\Omega}^{k}|_{\langle add M\rangle}: \langle add M\rangle \rightarrow \langle add\Omega^{k}(M)\rangle$ is an epimorphism,  we get that $\overline{\Omega}^k(add M)  = add (\Omega^{k} (M)) $. Moreover since  $rk(\langle \overline{\Omega}^{k} (add M) \rangle ) < rk(\langle \overline{\Omega}^{k-1}(add M) \rangle )$ it follows that  $\overline{\Omega}|_{\langle \overline{\Omega}^{k-1}(add M) \rangle}$ is not a monomorphism.  Now  $\overline{\Omega}|_{\langle \overline{\Omega}^{k+j}(add M) \rangle}$ is a monomorphism if and only if $\overline{\Omega}|_{\langle \overline{\Omega}^{j}(add \Omega^{k}(M)) \rangle}$ is a monomorphism, from this we deduce that  $\phi(M) = \phi(\Omega^{k}(M))+k$.
\end{proof}
\end{prop}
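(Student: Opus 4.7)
The plan is to work directly from the definition
$\phi(N) = \min\{l : \overline{\Omega}|_{\overline{\Omega}^{l+s}\langle \add N\rangle}$ is a monomorphism for all $s\in\N\}$
and prove the desired equality by establishing both inequalities. The strategy is to use the epimorphism hypothesis to identify the relevant subgroups of $K_0$ attached to $M$ with those attached to $\Omega^k(M)$, and then use the rank hypothesis only to pin down the lower bound $\phi(M)\geq k$.

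The first step is to promote the epimorphism assumption into the identity
$$\overline{\Omega}^{k+j}\langle \add M\rangle \;=\; \overline{\Omega}^{j}\langle \add \Omega^k(M)\rangle \qquad (j\geq 0).$$
This follows by applying the group endomorphism $\overline{\Omega}^{j}$ to the equality $\overline{\Omega}^{k}\langle \add M\rangle = \langle \add \Omega^k(M)\rangle$ which is exactly the content of the hypothesis. From this identity one immediately reads off the key equivalence: for every $l\geq k$ and every $s\geq 0$, the restriction $\overline{\Omega}|_{\overline{\Omega}^{l+s}\langle \add M\rangle}$ is a monomorphism if and only if $\overline{\Omega}|_{\overline{\Omega}^{(l-k)+s}\langle \add \Omega^k(M)\rangle}$ is a monomorphism. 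Consequently the smallest $l\geq k$ satisfying the defining condition for $\phi(M)$ is exactly $k+\phi(\Omega^k(M))$, which yields $\phi(M)\leq \phi(\Omega^k(M))+k$ and also shows that the equality holds provided one rules out $\phi(M)<k$.

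The remaining step is to establish $\phi(M)\geq k$, and this is where the rank hypothesis enters. Since $rk\langle \overline{\Omega}^{k}(\add M)\rangle < rk\langle \overline{\Omega}^{k-1}(\add M)\rangle$, the map $\overline{\Omega}$ fails to be injective on $\overline{\Omega}^{k-1}\langle \add M\rangle$. Hence for any candidate $l<k$, taking $s=k-1-l\geq 0$ violates the monomorphism clause in the definition of $\phi(M)$, so $l$ is inadmissible. Combining this lower bound with the equivalence from the previous paragraph gives $\phi(M)=\phi(\Omega^k(M))+k$.

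I expect no serious obstacle in executing this plan; the argument is essentially bookkeeping. The one point that requires a little care is the identity $\overline{\Omega}^{k+j}\langle \add M\rangle = \overline{\Omega}^{j}\langle \add \Omega^k(M)\rangle$: without the epimorphism hypothesis one only has the inclusion $\overline{\Omega}^{k}\langle \add M\rangle\subseteq \langle \add \Omega^k(M)\rangle$ (coming from the fact that syzygies of summands of $M$ are summands of $\Omega(M)$), and the equivalence of monomorphism conditions for the two modules would break down.
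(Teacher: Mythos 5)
Your proof is correct and follows essentially the same route as the paper's: you use the epimorphism hypothesis to identify $\overline{\Omega}^{k+j}\langle \add M\rangle$ with $\overline{\Omega}^{j}\langle \add \Omega^{k}(M)\rangle$ and transfer the monomorphism condition, and the rank drop to rule out $\phi(M)<k$. The only difference is that you spell out the bookkeeping that the paper leaves implicit.
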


\begin{coro}\label{indescomponibles}
Let $M =\oplus_i ^t M_i$ be a decomposition of a basic module.  Assume that  $\Omega^{k}(M_i)$ is indecomposable for  any $i$ and that  $rk(\langle \overline{\Omega}^{k}(add M) \rangle ) < rk(\langle \overline{\Omega}^{k-1}(add M) \rangle )$, then $\phi(M) = \phi(\Omega^{k}(M))+k$.
\end{coro}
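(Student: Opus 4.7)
The plan is to reduce this corollary to Proposition \ref{epimorfismo}. Since the rank hypothesis on $\overline{\Omega}^{k}|_{\langle \add M\rangle}$ is already assumed, the only thing left to check in order to invoke the proposition is that $\overline{\Omega}^{k}|_{\langle \add M\rangle}\colon \langle \add M\rangle \to \langle \add \Omega^{k}(M)\rangle$ is an epimorphism.

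First, I would unpack the definition of $\langle \add \Omega^{k}(M)\rangle$. Because $M=\bigoplus_{i=1}^t M_i$ is basic and each $\Omega^{k}(M_i)$ is indecomposable by hypothesis, we have the decomposition $\Omega^{k}(M)=\bigoplus_{i=1}^t \Omega^{k}(M_i)$ into indecomposables. By Remark \ref{positivecone}, the subgroup $\langle \add \Omega^{k}(M)\rangle$ of $K_0$ is therefore generated by the classes $[\Omega^{k}(M_i)]$ for those indices $i$ for which $\Omega^{k}(M_i)$ is non-projective (the projective ones contribute zero in $K_0$ by the defining relation).

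Next, I would note that by definition of $\overline{\Omega}$ we have $\overline{\Omega}^{k}([M_i])=[\Omega^{k}(M_i)]$ in $K_0$ for each $i$, so every generator of $\langle \add \Omega^{k}(M)\rangle$ lies in the image of $\overline{\Omega}^{k}|_{\langle \add M\rangle}$. Thus the map is surjective onto $\langle \add \Omega^{k}(M)\rangle$, which is exactly the hypothesis required by Proposition \ref{epimorfismo}.

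Once both hypotheses of Proposition \ref{epimorfismo} are in place, the conclusion $\phi(M)=\phi(\Omega^{k}(M))+k$ follows immediately. The only subtle point, which I would flag explicitly, is that the indecomposability of each $\Omega^{k}(M_i)$ is used precisely to identify $\add \Omega^{k}(M)$ with the additive closure of the set $\{\Omega^{k}(M_i)\}_{i=1}^t$; without this assumption one could only guarantee that the image of $\overline{\Omega}^{k}|_{\langle \add M\rangle}$ is contained in $\langle \add \Omega^{k}(M)\rangle$, not that it is all of it. Hence the step requiring the most care is simply the passage from \emph{indecomposable images} to \emph{surjectivity on the additive-span subgroup}.
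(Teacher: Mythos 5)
Your proof is correct and takes exactly the route the paper intends: the corollary is stated there without proof as an immediate consequence of Proposition \ref{epimorfismo}, and your observation that the indecomposability of each $\Omega^{k}(M_i)$ makes $\overline{\Omega}^{k}|_{\langle \add M\rangle}$ surjective onto $\langle \add \Omega^{k}(M)\rangle$ (since those classes generate that subgroup and each equals $\overline{\Omega}^{k}([M_i])$) is precisely the verification being left to the reader.
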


\section{Radical square zero algebras}

In this section  we will consider only radical square zero algebras.

\subsection{The function $\phi$ for radical square zero algebras}

\begin{defi}
Given a finite quiver $Q$, with $n$ vertices, we will call \textbf{heart} of $Q$  the full subquiver of  $Q$  determined by the vertices whose associated simple modules are in the intersection of the support of  $\Omega^n(A_0)$ with the support of
$\Omega^{-n}(A_0)$.
\end{defi}

\begin{defi}
We will call \textbf{member} of $Q$ the full subquiver of $Q$ determined by the vertices which are not in the heart.
\end{defi}

\begin{nota}
 The heart of $Q$ will be denoted by \textbf{$C(Q)$}  and the member by \textbf{$M(Q)$}.
\end{nota}

\noindent Next we present an useful and easy to prove result, a characterisation of $C(Q)$.

\begin{prop}
The heart of $Q$,
$C(Q)$, is the full subquiver of $Q$ determined by the vertices whose associated simple modules are in the support of $\displaystyle \bigcap_{t\in\Z}\Omega^t({A_0})$.
\end{prop}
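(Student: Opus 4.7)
The plan is to reduce the claim to a stabilization statement for a decreasing chain of subsets of the vertex set. I will write $X_t = \mbox{supp}(\Omega^t(A_0))$ for $t \in \mathbb{Z}$; the proposition asserts exactly that $\bigcap_{t \in \mathbb{Z}} X_t = X_n \cap X_{-n}$.

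First I would use Remark \ref{fgd} together with its dual form (the analogue for $\Omega^{-1}$, which also holds in a radical square zero algebra) to record that each $\Omega^t(A_0)$ is semisimple and to obtain the combinatorial description: for $t \geq 0$, $S_j$ is a summand of $\Omega^t(A_0)$ iff there exists a path of length $t$ in $Q$ ending at $j$; dually, for $t \leq 0$, $S_j$ is a summand iff there is a path of length $|t|$ starting at $j$. This follows by iterating the formula $\Omega(S_i) = \bigoplus_{\alpha: i \to j} S_j$.

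The key step is monotonicity: $X_{t+1} \subseteq X_t$ for $t \geq 0$, and dually $X_{-(t+1)} \subseteq X_{-t}$. This will be immediate from the combinatorial description above: given a path $v_0 \to v_1 \to \cdots \to v_{t+1} = j$ witnessing $j \in X_{t+1}$, the tail $v_1 \to \cdots \to v_{t+1}$ is a path of length $t$ ending at $j$, so $j \in X_t$. The argument uses essentially that $A_0$ contains every simple module, so any vertex of $Q$ can serve as a starting point.

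To finish I would observe that each $X_t$ is a subset of the $n$-element vertex set, so a strictly decreasing chain can have at most $n$ strict inclusions; furthermore $X_{t+1}$ is determined as a function of $X_t$ (the set of targets of arrows starting in $X_t$), so once two consecutive terms coincide the sequence is constant thereafter. Hence $X_t = X_n$ for all $t \geq n$, and analogously $X_t = X_{-n}$ for all $t \leq -n$; intersecting yields $\bigcap_{t \in \mathbb{Z}} X_t = X_n \cap X_{-n}$, which is precisely the vertex set defining $C(Q)$. No step presents a real obstacle here; the proposition is essentially a pigeonhole stabilization for a decreasing chain of subsets of $\{1, \ldots, n\}$, and the only thing to get right is the combinatorial identification of the supports in the first step.
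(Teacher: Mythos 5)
Your argument is correct. The paper states this proposition without proof (it is introduced as ``easy to prove''), so there is no official argument to compare against; your reduction to the stabilization of the decreasing chain $X_0\supseteq X_1\supseteq\cdots$ of supports, using that $X_{t+1}$ is determined by $X_t$ (so the chain is constant from the first repetition onward, hence from step $n$ at the latest, since $|X_0|=n$), together with the dual chain for cosyzygies, is exactly the argument the authors leave to the reader. The only point worth making explicit is that $\Omega(S_i)=\bigoplus_{\alpha:i\to j}S_j$ and its dual hold uniformly even for projective (resp.\ injective) simples, where both sides vanish, so the combinatorial description of the supports requires no case distinction.
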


\begin{ej}
\emph{Let $Q$ the following quiver}

$$ \xymatrix@=7mm{ & & & \dot 4 \ar[r] & \dot 6 \ar[rd]& & & \\
\dot 1 \ar[r] & \dot 2  \ar[r]& \dot 3  \ar[ru] & & & \dot 8 \ar[ld]\ar[r] & \dot 9 \ar[r] & \dot 10 \\
& & & \dot 5  \ar[lu]& \dot 7 \ar[l] & & &}$$

\emph{\noindent then}
$$\xymatrix@=5mm{  M(Q)= & \dot 1 \ar[r] & \dot 2 & & \dot 9 \ar[r]& \dot 10\\}$$

$$
 \xymatrix@=5mm{ & & & \dot 4 \ar[r] & \dot 6 \ar[rd]& \\
& C(Q)= & \dot 3 \ar[ru]  & & & \dot 8 \ar[ld] \\
& & & \ar[lu] \dot 5 & \dot 7 \ar[l] & }
$$

\end{ej}

\begin{prop}\label{member}
The following statements are equivalent:
\begin{enumerate}
\item $M(Q)=\emptyset$.
\item $C(Q)=Q$.
\item $Q$ has no sinks nor sources.
\item $\mod A$ has no simple injective nor simple projective objects.
\end{enumerate}
\begin{proof}The equivalence between items 1 and 2 and between 3 and 4 are clear.\\
We show next that 3 implies 2. If $Q$ has no sinks nor source, then for each vertex $v$ there is at least one arrow starting at it and at least one arrow ending at it. Since we can construct paths of length
$n = \# Q_0$  ending and beginning at any vertex we have $C(Q) = Q$.\\
Finally we show that 1 implies 4. Assume there is an injective simple module $S_{i}$, then this simple cannot be a direct summand of the syzygy of any module.
In particular it is not in the support of $\Omega^n(A_0).$
The case of existence of a simple projective is analogous. \end{proof}
\end{prop}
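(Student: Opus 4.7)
My plan is a cycle of implications that exploits the fact that $(1)\Leftrightarrow(2)$ and $(3)\Leftrightarrow(4)$ are both immediate, so the real content lies in linking the combinatorial condition on $Q$ to the support condition defining the heart.

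The equivalence $(1)\Leftrightarrow(2)$ is definitional, since $M(Q)$ and $C(Q)$ partition the vertex set of $Q$. For $(3)\Leftrightarrow(4)$ I would invoke Remark \ref{fgd}: the syzygy formula $\Omega(S_i) = \bigoplus_{\alpha:i\to j} S_j$ shows that $\Omega(S_i)=0$ exactly when $i$ has no outgoing arrows, so $S_i$ is simple projective iff $i$ is a sink; dually, via the analogous cosyzygy formula $\Omega^{-1}(S_i) = \bigoplus_{\alpha:j\to i} S_j$, $S_i$ is simple injective iff $i$ is a source.

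To close the cycle I would prove $(3)\Rightarrow(2)$ directly and $(1)\Rightarrow(4)$ by contrapositive. For $(3)\Rightarrow(2)$, iterating the syzygy formula shows that $S_j$ lies in the support of $\Omega^k(A_0)$ iff $Q$ admits a path of length $k$ ending at $j$, and dually $S_j$ lies in the support of $\Omega^{-k}(A_0)$ iff there is a path of length $k$ starting at $j$. Under the hypothesis that $Q$ has neither sinks nor sources, every vertex has both outgoing and incoming arrows, so from any $v$ one can walk forward $n=\#Q_0$ steps and backward $n$ steps, producing a path of length $n$ starting at $v$ and one of length $n$ ending at $v$. Hence every vertex lands in both supports and $C(Q)=Q$.

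For the contrapositive of $(1)\Rightarrow(4)$, suppose $S_i$ is simple injective. If $S_i$ were a direct summand of some syzygy $\Omega(M)$, the composite monomorphism $S_i\hookrightarrow\Omega(M)\hookrightarrow P$ into the projective cover of $M$ would split by injectivity of $S_i$, exhibiting $S_i$ as a summand of $P$ lying inside $\ker(P\to M)$; then the complementary summand of $P$ would still surject onto $M$, contradicting the minimality of the projective cover. Thus $S_i$ never occurs in the support of any positive syzygy, in particular not in that of $\Omega^n(A_0)$, so $i\in M(Q)$ and $M(Q)\neq\emptyset$. The simple projective case is dual, arguing with minimal injective envelopes in place of projective covers. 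The main technical kernel of the proposition is exactly this minimality argument; everything else is a bookkeeping translation between quiver paths and iterated syzygies, after which the chain $(3)\Rightarrow(2)\Leftrightarrow(1)\Rightarrow(4)\Leftrightarrow(3)$ yields the four-way equivalence.
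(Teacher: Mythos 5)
Your proof is correct and follows essentially the same route as the paper's: the equivalences $(1)\Leftrightarrow(2)$ and $(3)\Leftrightarrow(4)$ treated as immediate, $(3)\Rightarrow(2)$ via constructing length-$n$ paths from the absence of sinks and sources, and $(1)\Rightarrow(4)$ by showing an injective (resp.\ projective) simple cannot occur in any syzygy (resp.\ cosyzygy). You merely supply the details the paper leaves implicit, in particular the splitting-against-minimality argument for why an injective simple cannot be a summand of $\Omega(M)$.
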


\begin{defi}\label{subcorazonfinal defi}
 A  full subquiver $Q'$ of a quiver  $Q$ is called a \textbf{ final subheart} if it is minimal in the family of full subquivers of  $Q$  which are closed by sucessors.\\
 Analogously we define an \textbf{initial subheart} of $Q$ as a full  subquiver $Q'$, which  is minimal in the family of full subquivers of $Q$ which are closed by predecessors.
\end{defi}

\noindent We state the following lemma, whose proof we leave to the reader.
\begin{lema}\label{proptriviales}
The following statements hold:
\begin{enumerate}
\item Final and initial subhearts are connected.
\item If an initial or final subheart $Q'$ has at least one arrow then $Q'$ is contained in the heart of $Q$.
\item If a vertex $v$ is a sink then the subquiver consisting of the vertex $v$ is a final subheart.
 \item  The quiver $Q$ has a proper final subheart if and only if there exist a proper full subquiver $Q'$ such that $\Omega(\oplus_{i\in Q_0'} S_i) \in \add(\oplus_{i\in Q_0'} S_i)$.
  \item If  $Q'$ is a final subheart then all arrows starting at a vertex of  $Q'$ ends in $Q'$.
\end{enumerate}
The last three statements may also be stated in their dual version.
\end{lema}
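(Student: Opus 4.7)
The plan is to dispense with parts (3) and (5) immediately from the definitions, obtain (1) by a connected-component argument, identify the syzygy condition in (4) with closure under successors via Remark \ref{fgd}, and then focus on (2), which is the only statement requiring real work.

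For (5) there is nothing to prove: ``closed under successors'' means exactly that every arrow starting at a vertex of $Q'$ ends in $Q'$. For (3): if $v$ is a sink then the full subquiver on $\{v\}$ is vacuously closed under successors, and obviously minimal among nonempty full subquivers. For (1): if a final subheart $Q'$ had more than one connected component, pick one, call it $Q''$. Whenever $i\in Q''_0$ and $\alpha:i\to j$ is an arrow of $Q$, then $j\in Q'_0$ by closure of $Q'$, and since $i$ and $j$ are linked by $\alpha$ they belong to the same connected component, so $j\in Q''_0$. Thus $Q''$ is a proper full subquiver of $Q'$ closed under successors, contradicting minimality. The initial case is dual in each instance.

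For (4): by Remark \ref{fgd}, $\Omega(S_i)=\bigoplus_{\alpha:i\to j}S_j$, hence $\Omega(\oplus_{i\in Q'_0}S_i)\in\add(\oplus_{i\in Q'_0}S_i)$ if and only if $j\in Q'_0$ whenever $i\in Q'_0$ and $\alpha:i\to j$ is an arrow of $Q$, i.e.\ $Q'$ is closed under successors. A proper full subquiver with this property contains (by finiteness) a minimal such subquiver, still proper, which is a proper final subheart; the converse is trivial since a proper final subheart is itself such a subquiver.

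The main obstacle is (2), for which I would analyse the shape of a final subheart $Q'$. By minimality, for every $v\in Q'_0$ the full subquiver on the vertices reachable from $v$ (which is closed under successors and contained in $Q'$) must equal $Q'$. Consequently, if $|Q'_0|>1$ then every vertex of $Q'$ lies on a directed cycle inside $Q'$ (from $v$ reach any $u\neq v$, and from $u$ reach $v$). If $Q'$ has at least one arrow then either $|Q'_0|=1$ and that vertex carries a loop, or $|Q'_0|>1$; either way every vertex $v$ of $Q'$ lies on some cycle of length $k\geq 1$ in $Q$. Traversing that cycle repeatedly yields paths of every length $n\geq 0$ starting at $v$, and, writing $n=mk+l$ with $0\leq l<k$, starting $l$ steps before $v$ on the cycle and then going around $m$ full times yields paths of every length $n$ ending at $v$. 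The path-theoretic interpretation of the supports of iterated syzygies (coming from Remark \ref{fgd} and its dual for cosyzygies) then gives $v\in\mathrm{supp}\,\Omega^t(A_0)$ for every $t\in\mathbb{Z}$, i.e.\ $v\in C(Q)$ by the characterisation given in the proposition preceding this lemma. The initial case is dual.
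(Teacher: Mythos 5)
Your proof is correct, and in fact the paper offers no proof of this lemma at all (it is explicitly ``left to the reader''), so there is nothing to compare against; your argument fills that gap completely. The decisive observation is the one you isolate for part (2): minimality forces the set of vertices reachable from any $v\in Q'_0$ to be all of $Q'_0$, so a final subheart with an arrow is strongly connected (or a single vertex with a loop), every vertex lies on a closed walk of positive length, and concatenating partial and full traversals of that walk produces paths of every length starting and ending at $v$; combined with the identification $\mathrm{supp}\,\Omega^t(A_0)=\{v:\ \exists\ \text{path of length }t\text{ ending at }v\}$ (and its dual) coming from Remark \ref{fgd}, this puts $v$ in $C(Q)$. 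Two cosmetic points: the concatenation of a path $v\to u$ with a path $u\to v$ is a closed walk rather than a simple cycle, which is all your length argument needs; and throughout (as in parts (3) and (4)) one must read the family of full subquivers closed by successors as consisting of \emph{nonempty} subquivers, since otherwise the empty subquiver would be the unique final subheart --- this is clearly the intended convention and your proof uses it consistently.
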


\noindent We give now an example.

\begin{ej}

\emph{Let $Q$ be the following quiver:}\\
$$\xymatrix{ & &\dot 0 & \\
\dot 1 \ar@(l,u) \ar[r]& \dot 2 \ar[r]& \dot 3\ar[u] \ar[r]& \dot 4 \ar@/^/[r] & \dot 5 \ar@/^/[l]  }$$\\

\noindent \emph{The subquivers:} $\xymatrix{\dot 0 & and &\!\!\!\!\!\!\! \dot 4 \ar@/^/[r] & \dot 5 \ar@/^/[l] }\quad $ \emph{final subhearts.}
\vskip .5cm

\noindent \emph{On the other hand the subquiver} $Q'= \xymatrix{ \dot 1 \ar@(r,u)}$ \ \ \emph{is an initial subheart.}
\end{ej}

\vskip .2cm

\noindent For radical square zero algebras we show the following result concerning the finitistic dimension.

\begin{lema}\label{findim0}
Let $A$ be a non simple radical square zero algebra. Then $Q$ does not have a sink  if and only if $\fin(A) = 0$.

\begin{proof}
Assume $Q$ has a sink $v$. Since $A$ is not simple and connected $v$ is not a source. Let $w$ be an immediate predecessor of $v$. We can construct a non-split short exact sequence as follows:

 $$\eta_{S_v}:\quad\quad \xymatrix{ 0 \ar[r]& S_{v} \ar[r]& P_{w}=P(M_{S_v})  \ar[r]& M_{S_v} \ar[r]& 0}$$

\noindent Therefore   $\Omega(M_{S_v}) = S_v$, and it follows that  $\pd (M_{S_v}) = 1$.\\
We show now the other implication. If $Q$ does not have a sink  then there is no simple projective module. Since the syzygy of any module is semisimple it follows that the projective dimension of any non projective module is infinite, so $\fin(A)=0$.
\end{proof}
\end{lema}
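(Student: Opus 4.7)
The plan is to prove both implications, with the radical square zero hypothesis entering essentially through Remark \ref{fgd}: the syzygy of any module in $\mod A$ is semisimple, and for a simple $S_i$ one has $\Omega(S_i) = \bigoplus_{\alpha : i \to j} S_j$.

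For the direction ``$Q$ has a sink $\Rightarrow \fin(A) \geq 1$'', I would fix a sink $v$. Because $v$ is a sink, the projective cover $P_v$ of $S_v$ has zero radical, so $S_v$ is itself projective. Since $A$ is connected and non-simple, $v$ cannot be isolated in $Q$, so there exists an immediate predecessor $w$, i.e.\ an arrow $w \to v$. By Remark \ref{fgd}, the radical of $P_w$ is semisimple and equals $\bigoplus_{\alpha : w \to j} S_j$, so $S_v$ is isomorphic to a direct summand of that radical. Embedding $S_v \hookrightarrow P_w$ along this summand and taking the cokernel $M := P_w / S_v$ yields a non-split short exact sequence $0 \to S_v \to P_w \to M \to 0$. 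The top of $M$ is still $S_w$, and $M$ is a proper quotient of the projective cover $P_w$ of $S_w$, hence $M$ is not projective. Then $\Omega(M) = S_v$ is projective, which gives $\pd(M) = 1$ and therefore $\fin(A) \geq 1$.

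For the converse, I would assume that $Q$ has no sink, so that no simple $S_v$ is projective (the radical of its projective cover $P_v$ is $\bigoplus_{\alpha: v \to j} S_j \neq 0$ by the no-sink hypothesis). For any non-projective $M \in \mod A$, the first syzygy $\Omega(M)$ is semisimple by Remark \ref{fgd} and nonzero (otherwise $M$ would be projective). Since no simple summand of $\Omega(M)$ is projective, $\Omega(M)$ is not projective, and the same argument applied to $\Omega(M)$ shows $\Omega^{2}(M) \neq 0$ and is again a nonzero semisimple without projective summands. Iterating, $\Omega^{k}(M) \neq 0$ for every $k \geq 1$, hence $\pd(M) = \infty$. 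Therefore $\fin(A) = 0$.

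The main technical point lies in the forward direction: one must justify why $S_v$ appears as a direct summand of the radical of $P_w$, and why the quotient module $M$ is non-projective. Both are immediate once one unpacks the structure of projectives in a radical square zero elementary algebra, but they rely critically on the explicit decomposition of syzygies of simples from Remark \ref{fgd} and on the existence of a predecessor $w$, which is guaranteed precisely by the connectedness and non-simplicity of $A$.
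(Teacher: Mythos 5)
Your proposal is correct and follows essentially the same route as the paper: the forward direction builds the same non-split sequence $0 \to S_v \to P_w \to M \to 0$ with $\Omega(M)=S_v$ projective, and the converse uses that with no sinks there are no projective simples, so the semisimple syzygies never vanish. You merely spell out a few steps the paper leaves implicit (why $S_v$ is a summand of $\mathrm{rad}\,P_w$ and why $M$ is non-projective), which is fine.
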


\begin{obs}\label{remarkita}
\noindent Assume $S=S_v$ is the  simple module associated to a non source vertex $v\in Q_0$. We will use $\eta_{S}$, and $M_S$ to describe the sequence and any module given as in the proof of the last lemma.
\end{obs}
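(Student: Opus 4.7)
The statement is a notational convention rather than a genuine theorem, so the ``proof'' reduces to verifying that the symbols $\eta_S$ and $M_S$ are in fact well defined whenever $v$ is a non-source vertex. The plan is therefore to retrace the construction carried out in the proof of Lemma \ref{findim0} and to check that each step still makes sense under the stated hypothesis on $v$ alone (the existence of a sink is no longer assumed).

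First, since $v$ is not a source, $Q_1$ contains at least one arrow $\alpha\colon w\to v$, so $v$ has an immediate predecessor $w\in Q_0$. Second, because $A$ has radical square zero, Remark \ref{fgd} gives
$$\mathrm{rad}(P_w)=\Omega(S_w)=\bigoplus_{\alpha\colon w\to j}S_j,$$
and in particular $S_v$ appears as a direct summand of $\mathrm{rad}(P_w)$. Fixing any embedding $\iota\colon S_v\hookrightarrow P_w$ and setting $M_{S_v}:=\mathrm{coker}(\iota)$, one obtains the short exact sequence
$$\eta_S\colon\quad 0\longrightarrow S_v\longrightarrow P_w\longrightarrow M_{S_v}\longrightarrow 0$$
which is non-split (because $\iota$ factors through $\mathrm{rad}(P_w)$) and which coincides with the sequence manufactured in the proof of Lemma \ref{findim0}. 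In particular $\Omega(M_S)\cong S$ and $\pd(M_S)=1$.

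The only subtlety worth pointing out is that the predecessor $w$ of $v$ need not be unique, and even for a fixed $w$ the simple $S_v$ can occur in $\mathrm{rad}(P_w)$ with multiplicity greater than one, so there is in general a small family of eligible pairs $(\eta_S,M_S)$. The remark resolves this ambiguity by declaring that the symbols $\eta_S$ and $M_S$ refer to any such choice; this is harmless because the two homological features actually invoked later, namely $\Omega(M_S)\cong S$ and $\pd(M_S)=1$, are independent of which predecessor and which copy of $S_v$ one picks. There is no real obstacle: the only possible failure of the construction would be the absence of a predecessor of $v$, which is ruled out by the non-source hypothesis.
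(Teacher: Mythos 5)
Your reading is the right one: the remark is purely notational, and the only thing to check is that the construction from the proof of Lemma \ref{findim0} goes through when $v$ is merely a non-source vertex rather than a sink. Your verification of that point (existence of a predecessor $w$, $S_v$ a summand of $\mathrm{rad}(P_w)=\Omega(S_w)$ by radical square zero, the cokernel construction giving a non-split sequence with $\Omega(M_S)\cong S$, and the harmless non-uniqueness of the choice) matches what the paper intends.

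One slip, though: you carry over the conclusion $\pd(M_S)=1$ from the lemma, but that conclusion used the hypothesis that $v$ is a \emph{sink}, so that $S_v$ is projective. Under the remark's weaker hypothesis ($v$ not a source) one only gets $\Omega(M_S)\cong S_v$; if $S_v$ is not projective --- for instance if $v$ lies on a cycle, as happens for every $S\in\mathcal{S}_D$ in Proposition \ref{explicito}, which is precisely where the notation $M_S$ is used --- then $\pd(M_S)$ is $1+\pd(S_v)$ and is typically infinite for a radical square zero algebra. This does not affect the validity of the notational convention itself, since the later arguments only invoke $\Omega(M_S)\cong S$, but the blanket claim $\pd(M_S)=1$ should be dropped or restricted to the case where $v$ is a sink.
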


\begin{prop}\label{cotarad2}
If $A$ is a radical square zero algebra then:  $$\fidim(A) \leq \phi(A_0) + 1.$$

\begin{proof}
Let $M \in \mod A$. We get $\phi(M) \leq \phi(\Omega(M))+1$ from Lemma 3.4 of [HLM]. Moreover $\Omega(M) \in \add A_0$, so using Lemma \ref{cuentita} we see that  $\fidim(A)\leq \phi(A_0) + 1$.
\end{proof}
\end{prop}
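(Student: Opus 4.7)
The plan is to reduce the computation of $\phi(M)$ for a general $M \in \mod A$ to the computation of $\phi$ on a semisimple module, and then exploit monotonicity of $\phi$ under inclusion of $\add$-closures.

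First I would observe that since $J^2 = 0$, for any $M \in \mod A$ with projective cover $P \twoheadrightarrow M$ we have $\Omega(M) \subseteq JP$, and $J\cdot \Omega(M) \subseteq J^2 P = 0$. Hence $\Omega(M)$ is semisimple, i.e. $\Omega(M) \in \add A_0$. This is the structural fact which makes radical square zero algebras amenable: every first syzygy lies inside $\add A_0$.

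Next I would invoke the general inequality $\phi(M) \leq \phi(\Omega(M)) + 1$, which is exactly Lemma 3.4 of \cite{kn:hlm1} and is available for any Artin algebra, not just the radical square zero case. Combined with the first step this gives $\phi(M) \leq \phi(N) + 1$ for some $N \in \add A_0$, so it only remains to verify that $\phi(N) \leq \phi(A_0)$ whenever $\add N \subseteq \add A_0$.

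The latter monotonicity is the step that requires a tiny argument but no real obstacle. Since $\add N \subseteq \add A_0$ we have $\langle \add N \rangle \subseteq \langle \add A_0 \rangle$ as subgroups of $K_0$, hence $\overline{\Omega}^{l+s}\langle \add N \rangle \subseteq \overline{\Omega}^{l+s}\langle \add A_0 \rangle$ for every $l, s \in \mathbb{N}$. The restriction of an injective homomorphism to a subgroup is still injective, so for $l = \phi(A_0)$ the map $\overline{\Omega}|_{\overline{\Omega}^{l+s}\langle \add N \rangle}$ is injective for all $s$, which by Definition \ref{monomorfismo} means $\phi(N) \leq \phi(A_0)$. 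Putting the pieces together gives $\phi(M) \leq \phi(\Omega(M)) + 1 \leq \phi(A_0) + 1$ for every $M$, and taking the supremum yields $\fidim(A) \leq \phi(A_0) + 1$. The only delicate point is ensuring the inequality $\phi(M)\leq \phi(\Omega(M))+1$ is quoted correctly; beyond that, the argument is a direct assembly.
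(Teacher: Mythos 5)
Your proof is correct and follows essentially the same route as the paper: both rest on the inequality $\phi(M)\leq\phi(\Omega(M))+1$ from Lemma 3.4 of [HLM] together with the observation that $J^2=0$ forces $\Omega(M)\in\add A_0$. The only difference is the final step, where the paper appeals to Lemma \ref{cuentita} while you derive the needed monotonicity $\phi(N)\leq\phi(A_0)$ for $\add N\subseteq\add A_0$ directly from Definition \ref{monomorfismo}; your version is, if anything, slightly more explicit about why that step works.
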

\begin{prop}\label{autoinyectiva} For a connected non simple radical square zero algebra $A$ the following statements are equivalent:
\begin{enumerate}
\item  $\mathcal{S} = \mathcal{S}_D$ and for each simple module $S \in \mathcal{S}$, the module $M_S$ is simple(see \ref{remarkita}).
\item $Q$ is a cycle ($Z_n$).
\item $A$ is a Nakayama algebra without injective simple modules.
\item $A$ is a Nakayama algebra without projective simple modules.
\item $A$ is a selfinjective algebra.
\item All indecomposable projective modules have length 2.
\item All indecomposable injective modules have length 2.
\item The $\phi$-dimension of $A$ is zero.
\end{enumerate}
\begin{proof}
We start by showing that the  first statement implies the second one.\\
Since we are assuming $\mathcal{S} = \mathcal{S}_D$ the family of exact sequences $\{\eta_S\}_{S\in\mathcal{S}}$ shows that $l(P({M_S}))= 2$ for all $S\in \mathcal{S}$. But in this case $\{M_S\}_{S\in\mathcal{S}}$ is the family of all simple modules.  Therefore for each vertex there is exactly one arrow starting at it.  So the number of arrows is the same as the number of vertices of the quiver.
Since there is no source, for $\mathcal{S} = \mathcal{S}_D$, we have that for any vertex there is at least one arrow ending at it. Since the number of arrows and vertices coincides, there is exactly one arrow ending and one arrow starting at any vertex.
Because the quiver $Q$ is finite we get $Q$ is a cycle $Z_n$.\\
It is clear that the second statement implies all the other statements.\\
We now show that third statement implies the second one. Follows from the fact that the quiver of a Nakayama algebra is either a linearly ordered $A_n$ or a cycle $Z_n$. If there is no simple injective it must be a cycle.\\
Equivalence between two and four is similar to the equivalence between two and three.\\
Finally we show that the fifth statement implies the second one. The hypothesis implies that there is exactly one arrow starting and one arrow ending at any vertex. Therefore $Q$ is a cycle $Z_n$.
We leave to the reader to show that the other statements are also equivalent to these ones. \end{proof}
\end{prop}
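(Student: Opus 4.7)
My plan is to prove the equivalences by using condition (2) — that $Q$ is a cycle $Z_n$ — as a hub. The key translation from Remark \ref{fgd} is that for radical square zero algebras $\Omega(S_i) = \bigoplus_{i \to j} S_j$, so $l(P_i) = 1 + \#\{\text{arrows } i \to \cdot\}$ and dually $l(I_i) = 1 + \#\{\text{arrows } \cdot \to i\}$. This recasts most of the conditions as combinatorial conditions on in-degrees and out-degrees of vertices of $Q$, placing the whole argument in the combinatorics of $Q$.

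For (2) $\Rightarrow$ everything else, if $Q = Z_n$ every vertex has in-degree and out-degree $1$. Then each $P_i$ and $I_i$ is uniserial of length $2$, establishing (3), (4), (6), (7); the identification $P_i = I_{\sigma(i)}$ with $\sigma$ the cyclic successor gives self-injectivity (5); no vertex is a sink or a source, so $\mathcal{S} = \mathcal{S}_D$ and $M_{S_v} = S_{\sigma^{-1}(v)}$ is simple, establishing (1). For (8), the indecomposable non-projective modules over this self-injective Nakayama algebra are exactly the simples, $\overline{\Omega}$ permutes their classes, so $\overline{\Omega}|_{K_0}$ is injective and $\fidim(A)=0$.

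For the reverse implications I would proceed as follows. (1) $\Rightarrow$ (2) uses the sequences $\{\eta_S\}$ together with simplicity of each $M_S$ to force every vertex to have exactly one outgoing arrow; combined with $\mathcal{S}_I=\emptyset$ and a counting argument this gives in-degree $1$ as well, hence $Z_n$. (3) and (4) $\Rightarrow$ (2) invoke the standard fact that a Nakayama algebra's quiver is either $A_n$ or $Z_n$; absence of a simple injective (resp.\ simple projective) rules out $A_n$. (5) $\Rightarrow$ (2) uses that each $P_i$, being injective indecomposable, has simple socle (out-degree $1$), and dually each $I_i$ has simple top (in-degree $1$). For (8) $\Rightarrow$ (2), $\fidim(A)=0$ gives $\fin(A)=0$ hence no sinks, the left-right symmetry of $\phi$-dim announced in the abstract rules out sources, and any vertex with out-degree $\geq 2$ (say $v \to u_1,u_2$) produces the explicit nonzero kernel element $[P_v/S_{u_1}] + [P_v/S_{u_2}] - [S_v]$ of $\overline{\Omega}|_{K_0}$, a contradiction.

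The main obstacle is (6) $\Rightarrow$ (2) and (7) $\Rightarrow$ (2): each length-$2$ condition constrains only one side of $Q$ (all out-degrees $=1$ for (6), all in-degrees $=1$ for (7)) but not both simultaneously. To close these implications one has to bring in the complementary direction — for instance by coupling (6) with (7) through duality, or by arguing from the standing connectedness and non-simplicity hypotheses that a source cannot coexist with the other configurations allowed by (6). This is the step where the remarks "left to the reader" in the paper require the most technical care, and is where I would concentrate effort to complete the equivalences.
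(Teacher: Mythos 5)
Your route through item (2) as a hub, and the translation of conditions (1)--(7) into statements about in-degrees and out-degrees of $Q$, is essentially the paper's own strategy for the implications it writes out: (1)~$\Rightarrow$~(2) by counting arrows against vertices, (3)/(4)~$\Rightarrow$~(2) via the Nakayama quiver dichotomy, (5)~$\Rightarrow$~(2) via socles and tops. Your direct argument for (8) is a reasonable substitute for the paper's reliance on \cite{kn:marchuard}, but with two caveats. First, invoking the left--right symmetry of the $\phi$-dimension is circular here: Theorem \ref{izq=der} is proved later and its proof uses Proposition \ref{explicito}, which uses the present proposition; for the $\fidim=0$ case you should instead quote \cite{kn:marchuard} directly ($\fidim(A)=0$ iff $A$ selfinjective iff $A^{op}$ selfinjective) or apply Lemma \ref{findim0} to $A^{op}$. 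Second, your kernel element $[P_v/S_{u_1}]+[P_v/S_{u_2}]-[S_v]$ lies in $\ker\overline{\Omega}$ only when $v$ has out-degree exactly $2$; for out-degree $d>2$ replace the second summand by $[P_v/(\oplus_{j\ge 2}S_{u_j})]$.

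The real problem is precisely the one you flagged, but it cannot be closed by ``technical care'': the implications (6)~$\Rightarrow$~(2) and (7)~$\Rightarrow$~(2) are false as stated. Condition (6) says only that every vertex has out-degree $1$, and a connected quiver with all out-degrees equal to $1$ is a single oriented cycle with in-trees attached, which need not be empty. Concretely, take $Q$ with vertices $1,2,3$ and arrows $1\to 2$, $2\to 3$, $3\to 2$: then $A=\mathbb{K}Q/J^2$ is connected, non-simple, and every indecomposable projective has length $2$, yet $Q$ is not a cycle, $S_1$ is a simple injective, $I_2$ has length $3$, and $A$ is not selfinjective (since $P_1$ and $P_3$ have the same socle $S_2$ but are not isomorphic), so (2), (5) and (7) all fail while (6) holds. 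Reversing the arrows refutes (7)~$\Rightarrow$~(2). Your first proposed repair, coupling (6) with (7) by duality, only proves the weaker statement that (6) and (7) \emph{together} imply (2); your second, that connectedness and non-simplicity exclude a source under (6), is refuted by the same example. So this is not a gap you can fill as the statement stands: items (6) and (7) must be amended (for instance merged into ``all indecomposable projective and injective modules have length $2$'') before the full list of equivalences can be established.
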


%\begin{obs}\label{inyectivo simple}
%Let  $M$ be an indecomposable non projective module such that there is a short exact sequence as follows:
%$$\xymatrix{0 \ar[r] & T\ar[r] & P \ar[r] & M\ar[r]& 0}$$
%\noindent with $T$ semisimple and $P$ projective. Then $T$ has no injective summand.
%\end{obs}

\noindent  In the proof of next proposition we construct, for a radical square zero non selfinjective algebra, a witness module, that is a module $M\in \mod A$ with $\phi(M)$ equal to the $\phi$-dimension of the algebra.

\begin{prop}\label{explicito}
If $A$ is not selfinjective then  $\fidim(A) = \phi(\oplus_{S\in \mathcal{S}_D}S)+1$.

\begin{proof}
For each module $M$ we have that $\phi(M)\leq \phi(\Omega M)+1\leq \phi(\oplus_{S\in \mathcal{S}_D}S)+1$. So $\fidim(A)\leq \phi(\oplus_{S\in \mathcal{S}_D}S)+1$. Now for each $S\in \mathcal{S}_D$, consider an indecomposable module $M_S$ such that  $\Omega(M_S) = S$. If some $M_S$ is not simple, for some simple $S\in \mathcal{S}_D$, by Proposition \ref{epimorfismo} with $k=1$, we get that $\phi((\oplus_{\mathcal{S}_D}  M_S)\oplus (\oplus_{ \mathcal{S}_D}S) ) = \phi(\oplus_{ \mathcal{S}_D} S) +1$, and the result follows. If $M_S$ is simple for all $S \in \mathcal{S}_D$ since the algebra $A$ is not selfinjective we can use the Proposition \ref{autoinyectiva} to see that there is a  simple  module $S_0$ which is projective or injective.

\noindent Suppose $S_0$ is an injective (no projective) module. If $S_0$ is not isomorphic to $M_S$ for any $S\in \mathcal{S}_D$ then, by Proposition \ref{epimorfismo} with $k=1$, $\phi((\oplus_{\mathcal{S}_D} M_S )\oplus S_0) = \phi(\oplus_{\mathcal{S}_D} S)+1$. If $S_0$ is isomorphic to $M_S$, for some $S\in \mathcal{S}_D$, then there is a module $S_1 \in \mathcal{S}_D$ which is not isomorphic to any of the modules $M_S$. Then, again by Proposition \ref{epimorfismo} with $k=1$, we get that $\phi((\oplus_{\mathcal{S}_D} M_S )\oplus S_1) = \phi(\oplus_{\mathcal{S}_D} S)+1$. Finally if  $S_0$ is projective (non injective), then $\Omega(M_{S_0}) = S_0$  and as a consequence of Corollary \ref{indescomponibles} we see that $\phi((\oplus_{\mathcal{S}_D} M_S) \oplus M_{S_0}) = \phi(\oplus_{\mathcal{S}_D} S)+1$, and the result follows. \end{proof}
\end{prop}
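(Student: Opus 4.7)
The plan is to prove both inequalities separately, using the tools developed in Section 3. Write $T = \bigoplus_{S\in\mathcal{S}_D} S$ for brevity. For the upper bound, I would argue that every syzygy over a radical square zero algebra is semisimple, and moreover contains no simple injective summand, because $\Omega(M)$ embeds into $\mathrm{rad}(P(M))$, which is a direct sum of simples that are targets of arrows, hence non-sources, hence not simple injective. Dropping the projective summands via Lemma \ref{cuentita}, the remaining part of $\Omega(M)$ lies in $\add T$, so $\phi(\Omega M) \leq \phi(T)$ by monotonicity of $\phi$ with respect to $\add$-inclusion. Combining this with $\phi(M)\leq \phi(\Omega M)+1$, the inequality already invoked in Proposition \ref{cotarad2}, yields $\fidim(A)\leq \phi(T)+1$.

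For the matching lower bound, I need a witness $N\in\mod A$ with $\phi(N)\geq \phi(T)+1$. For each $S\in\mathcal{S}_D$, since $S$ is not simple projective, Remark \ref{remarkita} supplies an indecomposable $M_S$ with $\Omega(M_S)=S$. My strategy is to take $N$ as a direct sum of these $M_S$'s together with one extra well-chosen indecomposable, designed so that either Proposition \ref{epimorfismo} or Corollary \ref{indescomponibles} applies with $k=1$, giving $\phi(N)=\phi(\Omega N)+1=\phi(T)+1$. The verification requires two things: that $\overline\Omega$ surjects $\langle \add N\rangle$ onto $\langle\add \Omega N\rangle$, and that $rk(\langle\add N\rangle) > rk(\langle\add\Omega N\rangle)$.

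The choice of the extra summand splits into cases. If some $M_{S}$ is already non-simple, take $N = (\bigoplus_{\mathcal{S}_D} M_S)\oplus T$; that non-simple $M_S$ contributes a class in $K_0$ distinct from every $[S']$ and from every other $[M_{S'}]$ (the latter because $\Omega(M_{S'}) = S'$ distinguishes them), producing the strict rank drop. If instead all $M_S$ are simple, then $A$ being non-selfinjective forces, via Proposition \ref{autoinyectiva}, the existence of a simple $S_0\in\mathcal{S}_I\cup\mathcal{S}_P$. When $S_0$ is injective non-projective, I would adjoin $S_0$ itself if $S_0$ is not isomorphic to any $M_S$; if it is, a pigeonhole argument furnishes some $S_1\in\mathcal{S}_D$ not isomorphic to any $M_S$ (since $S_0\in\mathcal{S}_I$ occupies a slot outside $\mathcal{S}_D$ while the $|\mathcal{S}_D|$ values $M_S$ are all distinct), and I adjoin $S_1$. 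When $S_0$ is projective non-injective, I adjoin $M_{S_0}$; since $[S_0]=0$ in $K_0$, the new summand has zero image under $\overline\Omega$, and Corollary \ref{indescomponibles} applies cleanly because every $\Omega(M_S)$ is simple hence indecomposable.

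The main obstacle I anticipate is the bookkeeping in the second case: one must carefully separate the subcases where $S_0$ is injective or projective, and handle the coincidence $S_0\cong M_S$, which requires using both the disjointness of $\mathcal{S}_P$, $\mathcal{S}_I$, $\mathcal{S}_D$ and the fact that $S\mapsto M_S$ is one-to-one. Once the right $N$ is in hand in each subcase, the named propositions from Section 3 do the remaining work, and the upper and lower bounds match to give equality.
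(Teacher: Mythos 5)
Your proof is correct and follows essentially the same route as the paper: the upper bound via $\phi(M)\leq\phi(\Omega M)+1$ and the fact that the non-projective part of any syzygy lies in $\add(\oplus_{S\in\mathcal{S}_D}S)$, and the lower bound via exactly the same case analysis (some $M_S$ non-simple; all $M_S$ simple with $S_0$ injective, split according to whether $S_0\cong M_S$; $S_0$ projective handled by Corollary \ref{indescomponibles}), invoking Proposition \ref{epimorfismo} with $k=1$ in each case. The only difference is that you spell out two details the paper leaves implicit — why injective simples cannot occur in syzygies, and the pigeonhole step producing $S_1$ — which only strengthens the argument.
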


\begin{coro}\label{sinmember}
If  $A$ is not selfinjective and $M(Q)$ is empty then $\fidim(A) = \phi(A_0)+1$.
\end{coro}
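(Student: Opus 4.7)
The plan is to reduce the corollary directly to Proposition \ref{explicito}. By that proposition, whenever $A$ is a non-selfinjective radical square zero algebra we already have
\[
\fidim(A) \;=\; \phi\!\left(\bigoplus_{S\in \mathcal{S}_D} S\right) + 1.
\]
Thus the only content to be verified is the identification
\[
\bigoplus_{S\in \mathcal{S}_D} S \;\cong\; A_0.
\]

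First, I would unravel the hypothesis $M(Q) = \emptyset$ via Proposition \ref{member}: the emptiness of the member is equivalent to $C(Q) = Q$, which in turn is equivalent to $Q$ having neither sinks nor sources, and this is equivalent to $\mathcal{S}_I = \mathcal{S}_P = \emptyset$ in $\mod A$. Consequently $\mathcal{S}_D = \mathcal{S} \setminus (\mathcal{S}_I \cup \mathcal{S}_P) = \mathcal{S}$, so that $\bigoplus_{S\in \mathcal{S}_D} S$ is, up to isomorphism, precisely $A_0$.

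Substituting this identification into the formula from Proposition \ref{explicito} gives $\fidim(A) = \phi(A_0) + 1$, as required. There is no serious obstacle here: the only thing to be careful about is to note that the hypothesis ``$A$ is not selfinjective'' is compatible with $M(Q) = \emptyset$ (for example, when $Q$ has no sinks or sources but is not a cycle $Z_n$, cf. Proposition \ref{autoinyectiva}), so that the invocation of Proposition \ref{explicito} is legitimate.
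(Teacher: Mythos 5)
Your proof is correct and is exactly the argument the paper leaves implicit: the corollary is an immediate consequence of Proposition \ref{explicito} once one observes, via Proposition \ref{member}, that $M(Q)=\emptyset$ forces $\mathcal{S}_I=\mathcal{S}_P=\emptyset$ and hence $\bigoplus_{S\in\mathcal{S}_D}S\cong A_0$. Nothing further is needed.
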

\begin{prop}\label{cotazocalo}
 If $Q$ has $n$ vertices then $\phi(A_0)\leq n-1$.
\begin{proof}
Since  $\Omega(A_0) \in \add A_0$, we can use  Proposition \ref{invariante} to obtain that  $\phi(A_0) \leq n $. If $\phi(A_0) = n$, again by Proposition \ref{invariante},  we see that $\pd(A_0)=n$ is finite, and so $\gd(A)=\pd(A_0)=n$ is finite. But for a radical square zero algebra with finite global dimension it holds that $\gd(A)\leq \# Q_0 - 1$ (see Remark \ref{fgd}), a contradiction. \end{proof}
\end{prop}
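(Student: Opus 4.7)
The plan is to combine Proposition \ref{invariante}(3) with the global dimension bound recorded in Remark \ref{fgd}. The key observation is that for a radical square zero algebra we have $\Omega(A_0)\in\add A_0$, so Proposition \ref{invariante} applies to $M=A_0$.

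First I would invoke Proposition \ref{invariante}(3) with $M=A_0$. Writing $A_0 = S_1\oplus\cdots\oplus S_n$ as a sum of $n$ pairwise non-isomorphic indecomposable simple summands, the proposition yields the weak bound $\phi(A_0)\leq n$ immediately. This takes care of everything except ruling out the extremal value $\phi(A_0)=n$.

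Next I would argue by contradiction: suppose $\phi(A_0)=n$. The second sentence of Proposition \ref{invariante}(3) asserts that in this extremal case $\pd(A_0)=n$, so in particular $\pd(A_0)$ is finite. Since $A_0$ contains every simple module as a summand and the global dimension of any algebra equals the supremum of the projective dimensions of its simples, this forces $\gd(A)=\pd(A_0)=n$, a finite value.

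The final step is to apply Remark \ref{fgd}: a radical square zero algebra with $n$ vertices and finite global dimension must satisfy $\gd(A)\leq n-1$. This contradicts $\gd(A)=n$ obtained above, and hence $\phi(A_0)\leq n-1$. I do not foresee a real obstacle here; the only subtlety is making sure that Proposition \ref{invariante} genuinely applies, which is guaranteed because for radical square zero algebras the syzygy of every simple is semisimple (Remark \ref{fgd}), so $\overline{\Omega}(\langle\add A_0\rangle)\subseteq\langle\add A_0\rangle$.
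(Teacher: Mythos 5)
Your proof is correct and follows essentially the same route as the paper: apply Proposition \ref{invariante}(3) to $M=A_0$ (valid since $\Omega(A_0)\in\add A_0$) to get $\phi(A_0)\leq n$, note that equality would force $\pd(A_0)=n$ and hence $\gd(A)=n$ finite, and derive a contradiction with the bound $\gd(A)\leq n-1$ from Remark \ref{fgd}. Nothing to add.
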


\begin{coro}\label{toto}
If $A$ is a radical square zero algebra then  $\fidim(A)\leq n.$

\begin{proof}
This is a consequence of Propositions  \ref{cotazocalo}  and \ref{cotarad2}.
\end{proof}
\end{coro}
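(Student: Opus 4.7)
The claim is that $\fidim(A) \le n$, where $n = \#Q_0$, for any radical square zero algebra. The plan is simply to chain together the two immediately preceding propositions, which have already done all the structural work.

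First I would invoke Proposition \ref{cotarad2} to get the upper bound $\fidim(A) \le \phi(A_0) + 1$. This reduces the task to bounding $\phi(A_0)$. Then I would apply Proposition \ref{cotazocalo}, which gives $\phi(A_0) \le n - 1$. Putting these two inequalities together yields
\[
\fidim(A) \;\le\; \phi(A_0) + 1 \;\le\; (n-1) + 1 \;=\; n,
\]
which is exactly the desired statement.

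There is essentially no obstacle here: both ingredients are in place, and the argument is a one-line composition of bounds. The only thing worth double-checking is that the hypotheses of each proposition are met — Proposition \ref{cotarad2} requires only that $A$ be radical square zero (which is our standing assumption in this section), and Proposition \ref{cotazocalo} requires only that $Q$ have $n$ vertices, which is the definition of $n$. So the corollary follows at once.
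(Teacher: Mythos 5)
Your proposal is correct and is exactly the paper's argument: the corollary is proved by composing the bound $\fidim(A)\leq\phi(A_0)+1$ from Proposition \ref{cotarad2} with $\phi(A_0)\leq n-1$ from Proposition \ref{cotazocalo}. Nothing further is needed.
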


\noindent We present now and example of an algebra for which the previous inequality can be strict.

\begin{ej}\label{1pto} 
\emph{Let be $A$ a radical square zero algebra with $Q$ given by}
 $$Q = \xymatrix{1 \ar@(d,r)}$$\\
\emph{Since this is a selfinjective algebra  then $\fidim(A) = 0$, (Theorem 5 of [HL]).}
\end{ej}

%\begin{coro}\label{fgd}
%If a radical square zero algebra has $n$ non isomorphic simple modules and it has finite global dimension then its  global
%dimension is at most $n-1$.
%\end{coro}
%\begin{proof}
%If the global dimension of $A=\frac{\mathbb{K} Q}{J^2}$ is finite then $Q$ is triangular. In this case, if $A$ is not simple, then $\# %({\mathcal{S}_P} \cup {\mathcal{S}_I})=k >1$, and therefore \gd$(A)\leq n-k+1\leq{n-1}$.
%\end{proof}

\noindent Let $k=\# ({\mathcal{S}_P} \cup {\mathcal{S}_I})$.

\begin{prop}\label{phimembermaximal}
If $A$ has infinite global dimension and $M(Q)$ is not empty then $\fidim(A) \leq \# {\mathcal{S}_D}= n-k$.

\begin{proof}
Let $\{S_1, \dots, S_k\}$ be the set of isomorphism classes of simple modules in $S_P\bigcup S_I$, and $\{S_{k+1},\dots, S_n\}={\mathcal{S}_D}$ a basis of $K_1$. Invoking  Proposition \ref{invariante} item 3, we see that  $\displaystyle\phi(\oplus_{i = k+1}^n S_{i}) \leq n-k-1$.
\end{proof}
\end{prop}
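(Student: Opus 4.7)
The plan is to reduce $\fidim(A)$ to $\phi$ of the semisimple module $N := \bigoplus_{S \in \mathcal{S}_D} S$ by invoking Proposition~\ref{explicito}, and then to bound $\phi(N)$ via Proposition~\ref{invariante}(3), using the hypothesis $\gd(A) = \infty$ to rule out the extremal case.

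First I verify that Proposition~\ref{explicito} applies. Infinite global dimension forces $A$ non-simple, and since $M(Q) \neq \emptyset$, Proposition~\ref{member} tells us that $Q$ has a sink or a source, so $Q$ is not a cycle $Z_n$; by Proposition~\ref{autoinyectiva}, $A$ is then not selfinjective. Proposition~\ref{explicito} therefore gives
\[
\fidim(A) = \phi(N) + 1.
\]

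Next I bound $\phi(N)$. The module $N$ has exactly $|\mathcal{S}_D| = n-k$ pairwise non-isomorphic indecomposable summands, and the closure condition $\overline{\Omega}(\langle \add N\rangle) \subseteq \langle \add N\rangle$ holds: for every $S \in \mathcal{S}_D$, the syzygy $\Omega(S)$ is semisimple with summands $S_j$ at vertices $j$ that receive an arrow, so $j \notin \mathcal{S}_I$, and the projective simples (with $j \in \mathcal{S}_P$) vanish in $K_0$, leaving the class $\overline{\Omega}([S])$ inside $\langle \add N \rangle$. Proposition~\ref{invariante}(3) then yields $\phi(N) \leq n-k$, with equality forcing $\pd(N) = n-k$ and in particular $\pd(N) < \infty$.

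The main obstacle is excluding the equality case $\phi(N) = n-k$; this is where the hypothesis $\gd(A) = \infty$ enters. Assume for contradiction that $\pd(N) < \infty$. Then every $S \in \mathcal{S}_D$ has finite projective dimension; simples in $\mathcal{S}_P$ have projective dimension zero; and for each source simple $S \in \mathcal{S}_I$, the syzygy $\Omega(S)$ is a sum of simples $S_j$ at non-source vertices, so $S_j \in \mathcal{S}_D \cup \mathcal{S}_P$, which have finite projective dimension, whence $\pd(S) < \infty$ as well. Every simple would then have finite projective dimension and $\gd(A) < \infty$, contradicting the hypothesis. Therefore $\phi(N) \leq n-k-1$ and $\fidim(A) = \phi(N)+1 \leq n-k$.
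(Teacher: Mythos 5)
Your proof is correct and follows essentially the same route as the paper's: bound $\phi\bigl(\oplus_{S\in\mathcal{S}_D}S\bigr)$ by $n-k-1$ via Proposition~\ref{invariante}(3) and pass to $\fidim(A)$ via Proposition~\ref{explicito}. You are in fact more complete than the paper, which leaves implicit both the reduction $\fidim(A)=\phi(N)+1$ and the use of $\gd(A)=\infty$ to exclude the equality case $\phi(N)=n-k$ (which would force $\pd(N)=n-k<\infty$ and hence finite global dimension); your filling in of these steps, and your careful verification that the $K_0$-level closure $\overline{\Omega}(\langle\add N\rangle)\subseteq\langle\add N\rangle$ suffices for Proposition~\ref{invariante}, are exactly right.
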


\noindent The Corollary \ref{toto} and Proposition \ref{phimembermaximal} suggest the following definition.

\begin{defi}\label{maximalphidimension}
We say that a radical square zero algebra $A$ with $n$ vertices has maximal $\phi$-dimension if $\phi(A)= n$, in case $M(Q)$ is empty, or $\phi(A)= n-1$, in case $M(Q)$ is not empty.
\end{defi}

\subsection{Left and right $\phi$-dimension}

It is well known that for a noetherian ring the left global dimension is equal to the right global dimension (see [A1]). We do not know any example of an Artin algebra $A$ where the left $\phi$-dimension ($\fidim_l(A)= \fidim_r(A^{op})$) is not equal to the right  $\phi$-dimension of $A$ ($\fidim_r(A)$). We prove now that they are equal if $A = \frac{\mathbb{K} Q}{J^2}$ by showing that $\fidim_r(A)= \fidim_r(A^{op})$.

\begin{teo}\label{izq=der}
If $A$ is a radical square zero algebra then $\fidim (A) = \fidim(A^{op})$.
\end{teo}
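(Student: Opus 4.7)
The plan is to reduce the theorem, using the propositions already established, to the linear-algebraic observation that a matrix and its transpose share the same sequence of iterated ranks.

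First I would dispose of the selfinjective case. Selfinjectivity of an Artin algebra is left--right symmetric, so $A$ is selfinjective iff $A^{op}$ is; in that case Proposition \ref{autoinyectiva} gives $\fidim(A) = 0 = \fidim(A^{op})$ and we are done. Assume from now on that $A$ (equivalently $A^{op}$) is not selfinjective.

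Next I would apply Proposition \ref{explicito} to both $A$ and $A^{op}$. A simple $S_i$ is projective over $A^{op}$ iff $i$ is a sink of $Q^{op}$ iff $i$ is a source of $Q$, i.e.\ $S_i\in\mathcal S_I(A)$; dually $\mathcal S_I(A^{op})=\mathcal S_P(A)$. Consequently $\mathcal S_D(A^{op}) = \mathcal S_D(A)$ and the theorem reduces to proving the single equality
\[
\phi_A(M) \;=\; \phi_{A^{op}}(M), \qquad \text{where } M := \bigoplus_{S\in\mathcal S_D} S.
\]

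To compute both sides uniformly I would verify that $\overline{\Omega}$, respectively $\overline{\Omega}^{op}$, sends $\langle\add M\rangle$ into itself inside $K_0(A)$, respectively $K_0(A^{op})$. This is routine: for $i\in\mathcal S_D$ the syzygy $\Omega(S_i)=\bigoplus_{\alpha:i\to j}S_j$ can only reach vertices $j$ that admit an incoming arrow, so $S_j\notin\mathcal S_I$, and after passing to $K_0$ the projective (sink) contributions vanish, leaving a class in $\langle\mathcal S_D\rangle$; the $A^{op}$-case is dual. Proposition \ref{invariante}(2) then identifies $\phi(M)$ with the smallest $l$ at which $rk(\overline\Omega^{l}|_{\langle\add M\rangle})$ stabilises, and likewise for $A^{op}$.

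The last step is elementary linear algebra. Let $N$ be the adjacency matrix of $Q$ and $N_D$ its submatrix with rows and columns indexed by $\mathcal S_D$. A brief bookkeeping calculation shows that in the basis $\{S_i:i\in\mathcal S_D\}$ the matrix of $\overline{\Omega}_A|_{\langle\add M\rangle}$ is $N_D^{T}$, whereas the matrix of $\overline{\Omega}_{A^{op}}|_{\langle\add M\rangle}$ is $N_D$. These two matrices are transposes of each other; over $\mathbb Q$ they share the same Smith normal form, hence the same sequence of iterated ranks, and their rank-stabilisation indices coincide. This yields $\phi_A(M)=\phi_{A^{op}}(M)$ and finishes the proof. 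The only non-trivial piece of work is that bookkeeping: tracking the two sources of transposition ($Q$ versus $Q^{op}$, and the matrix of a linear map versus the map itself) so that they cancel correctly, and verifying the inclusion $\overline{\Omega}(\langle\add M\rangle)\subseteq\langle\add M\rangle$ on both sides.
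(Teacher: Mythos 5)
Your proposal is correct and follows essentially the same route as the paper: dispose of the selfinjective case, reduce via Proposition \ref{explicito} to computing $\phi$ of $\bigoplus_{S\in\mathcal S_D}S$ on both sides, and conclude because the two matrices of $\overline{\Omega}$ in the basis $\mathcal S_D$ are transposes of one another and hence have identical iterated ranks. Your explicit verification that $\overline{\Omega}$ preserves $\langle\add M\rangle$ (so that Proposition \ref{invariante} applies) and that $\mathcal S_D(A^{op})=\mathcal S_D(A)$ merely spells out steps the paper leaves implicit.
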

\begin{proof}
We know by \cite{kn:marchuard} that $\fidim (A) = 0$ if and only if $A$ is selfinjective, and this is equivalent to $A^{op}$ be selfinjective. Again by  \cite{kn:marchuard} the last is equivalent to $\fidim (A^{op})= 0$. We assume now that $\fidim (A) > 0$ (so $\fidim (A^{op}) > 0$). Let $T= \oplus_{S\in \mathcal{S}_D}S$, and $\tilde{T}= \oplus_{\tilde{S}\in \tilde{\mathcal{S}}_D}\tilde{S}$ where $\tilde{\mathcal{S}}_D$ are the simple modules of $A^{op}$ which are not injective nor projective. Using Proposition \ref{explicito} we see that $\fidim (A) = \phi(T)+1 $ and $\fidim (A^{op}) = \phi(\tilde{T})+1 $.
Let $ \beta = \{S\}_{S \in \mathcal{S}_D}$ and $ \beta' = \{\tilde{S}\}_{\tilde{S} \in \tilde{\mathcal{S}}_D}$ be the basis of the subgroups $<add T>$ and $<add \tilde{T}>$ in $K_0(A)$ and $K_0(A^{op})$ respectively.

\noindent Let $\mathfrak{M}$ be the matrix of the endomorphism $\bar{\Omega}|_{<addT>}:<add T>\to <add T>$ in the basis $\beta$ and $\mathfrak{N}$ the matrix of $\bar{\Omega}|_{<add\tilde{T}>}:<add \tilde{T}>\to <add \tilde{T}>$ in the basis $\tilde{\beta}$.
By Proposition \ref{invariante} we get that $\phi(T) = min\{l: rk(\mathfrak{M}^l) = rk(\mathfrak{M}^{l+1})\}$ and
$\phi(\tilde{T}) =min\{l: rk(\mathfrak{N}^l) = rk(\mathfrak{N}^{l+1})\}$. The result follows from the fact that $\mathfrak{M}$ is the transpose of $\mathfrak{N}$.
\end{proof}

\subsection{Quivers with members}
We have shown that if $M(Q)$ is not empty then $\fidim(A) \leq n-1$. We say that an algebra with $M(Q)$ not empty and $\fidim = n-1$ is an algebra with maximal $\phi$-dimension.

\begin{ej}\label{ejemplito}
\emph{The radical square zero algebras whose quiver are of the form $Q$ or $Q'$ given below have no empty members and their $\phi$-dimension is equal to $n-1$,}

\begin{itemize}
\item $ Q = A_n = \xymatrix{  1 \ar[r]& 2 \ar[r] & 3 \ar[r]& \ldots \ar[r]& n }$

\emph{and}

\item $  Q'= \xymatrix{  1 \ar[r]& 2 \ar[r] & 3 \ar[r]& \ldots \ar[r]& n \ar@(u,r)}$.
\end{itemize}
\emph{\noindent Observe that the algebras on the first family have finite global dimension and the ones in the second family have infinite global dimension with finitistic dimension equal to zero.}
\end{ej}

\noindent We define now a pre-order relation on the set of vertices  $Q_0$:

\begin{defi} We define in $Q_0$ the following preorder relation: given $x,y\in Q_0$ then $x\preceq y$ if there is a path in $Q$ starting at $x$ and ending at $y$.
\end{defi}

\begin{ej}
\emph{Let}  $Q = \xymatrix{1 \ar@/^/[r]& 2 \ar@/^/[l] \ar[r] & 3 }$. \emph{Then  $1 \preceq 2$ and $2 \preceq 1$, therefore in this example $(Q_0,\preceq)$ is a preorder which is not an order.}
\end{ej}

\begin{obs}
The preorder $(Q_0,\preceq)$ is an order if and only if every cycle in $Q$ is a composition of loops.
\end{obs}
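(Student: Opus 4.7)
The statement is a characterization of when the preorder $(Q_0,\preceq)$ is actually a partial order, i.e.\ when it is antisymmetric, since reflexivity (via the stationary path at each vertex) and transitivity (via concatenation of paths) are immediate from the definition of $\preceq$. So the only thing to analyse is when $x\preceq y$ and $y\preceq x$ force $x=y$. My plan is to translate this antisymmetry condition into a condition about cycles in $Q$ that pass through at least two distinct vertices, and then observe that the negation of this is exactly ``every cycle is a composition of loops''.

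I would proceed as follows. First I would recall precisely what ``a composition of loops'' should mean here: a closed path all of whose arrows are loops. Equivalently, a cycle that visits only one vertex of $Q$. Thus the right-hand side of the equivalence can be reformulated as: \emph{$Q$ has no cycle passing through two distinct vertices}.

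For the implication ``$(Q_0,\preceq)$ is an order $\Rightarrow$ every cycle is a composition of loops'', I would argue by contrapositive. Suppose $Q$ admits a cycle $c$ visiting two distinct vertices $x$ and $y$. Starting at $x$, traverse $c$ until we first hit $y$; this gives a path $p$ from $x$ to $y$. Continuing along $c$ from $y$ back to $x$ gives a path $q$ from $y$ to $x$. Hence $x\preceq y$ and $y\preceq x$ with $x\neq y$, so $\preceq$ is not antisymmetric.

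For the converse implication, I would again use the contrapositive. Suppose $\preceq$ is not antisymmetric, so there are distinct vertices $x,y$ and paths $p:x\rightsquigarrow y$, $q:y\rightsquigarrow x$. The concatenation $qp$ is a closed path at $x$ which visits $y\neq x$. This closed path contains a cycle passing through at least two distinct vertices, so at least one of its arrows is not a loop; hence not every cycle of $Q$ is a composition of loops.

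I do not expect any real obstacle beyond making the terminology precise; the substance of the argument is just concatenation of paths on one side and extraction of a non-loop cycle on the other, so the proof will be short and essentially a tautology once the definitions of ``cycle'' and ``composition of loops'' are pinned down.
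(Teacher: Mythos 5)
The paper states this remark without proof, and your argument supplies exactly the intended justification: reflexivity and transitivity are automatic for $\preceq$, and antisymmetry fails precisely when some oriented cycle passes through two distinct vertices, i.e.\ is not a composition of loops. Your proof is correct; the only point worth pinning down in a final write-up is that in the converse direction the concatenated closed path $qp$ is itself a cycle through two distinct vertices (or, if ``cycle'' is taken to mean a simple cycle, that such a closed walk contains one), which is routine.
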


\begin{lema}
The restriction $(M(Q)_0,\ \preceq )$ is an order.
\end{lema}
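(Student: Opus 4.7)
The plan is to establish antisymmetry of $\preceq$ on $M(Q)_0$; reflexivity and transitivity are immediate. So I take $x,y\in M(Q)_0$ with $x\preceq y$ and $y\preceq x$ and I want to conclude $x=y$. Assume toward a contradiction $x\neq y$. Then the paths witnessing $x\preceq y$ and $y\preceq x$ are both nontrivial (length $\geq 1$), and concatenating them yields a closed, nontrivial path through $x$, i.e.\ $x$ lies on a (directed) cycle of $Q$ of some length $\ell\geq 1$.

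The key intermediate step is to translate ``lying on a cycle'' into a statement about the support of $\Omega^t(A_0)$. Since $A$ is radical square zero, $\Omega(S_i)=\bigoplus_{\alpha:i\to j}S_j$, and by iteration $S_v$ lies in the support of $\Omega^t(A_0)$ exactly when there is a path of length $t$ in $Q$ ending at $v$; dually $S_v$ lies in the support of $\Omega^{-t}(A_0)$ iff there is a path of length $t$ starting at $v$. Now if $x$ sits on a cycle of length $\ell$, then for any $t\geq 1$ one can produce a path of length $t$ ending at $x$ (respectively starting at $x$) by going around the cycle enough times and attaching a tail of the appropriate residue length $t \bmod \ell$. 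Hence $S_x$ appears in the support of every $\Omega^t(A_0)$ with $t\in\mathbb Z$; in particular, taking $t=n=\#Q_0$, $S_x$ is in the support of $\Omega^n(A_0)\cap \Omega^{-n}(A_0)$, so $x\in C(Q)_0$. This contradicts $x\in M(Q)_0 = Q_0\setminus C(Q)_0$ and finishes the argument.

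I do not expect a genuine obstacle here; the only mildly delicate point is the path-of-length-exactly-$t$ construction on a cycle (as opposed to ``some path of length at least $t$''), which is what actually matches the syzygy formula. Once that bookkeeping is in place, the proof is just: nontrivial round-trip $\Rightarrow$ cycle through $x$ $\Rightarrow$ $x\in C(Q)_0$, contradicting $x\in M(Q)_0$.
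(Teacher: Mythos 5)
Your proof is correct and follows the same route as the paper, which simply cites the fact that vertices of $M(Q)$ do not lie on cycles; you supply the details of that fact (a closed walk through $x$ yields paths of every length $t$ starting and ending at $x$, hence $S_x$ lies in the support of $\Omega^{\pm n}(A_0)$ and $x\in C(Q)_0$). Nothing further is needed.
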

\begin{proof}
This is a consequence of the fact that the vertices of  $M(Q)$ are not in cycles of $Q$.
\end{proof}

\begin{obs}
The member is completely ordered, with the order given above, if and only it has exactly one sink and one source if and only if there is  a path on the member of length $k-1$, where $k$ is the number of vertices on the member.
\end{obs}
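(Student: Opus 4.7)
The plan is to establish the implications $(3)\Rightarrow(1)\Rightarrow(2)\Rightarrow(3)$. Throughout I rely on the preceding lemma: $(M(Q)_0,\preceq)$ is already a partial order, because by definition of the member no vertex of $M(Q)$ lies on a cycle of $Q$, and consequently $M(Q)$ is a finite acyclic subquiver.

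The implication $(3)\Rightarrow(1)$ is immediate. A directed path $v_1\to v_2\to\cdots\to v_k$ in $M(Q)$ of length $k-1$ must visit each of the $k$ vertices of $M(Q)$ exactly once by acyclicity, and from the definition of $\preceq$ this yields $v_1\prec v_2\prec\cdots\prec v_k$, so every pair of vertices of $M(Q)_0$ is comparable. For $(1)\Rightarrow(2)$ I would use that a finite totally ordered set has a unique minimum and a unique maximum, and identify them with a unique source and sink of $M(Q)$: a $\preceq$-minimal element of $M(Q)_0$ has no predecessor in $M(Q)_0$ and hence no incoming arrow coming from $M(Q)$, so it is a source of the subquiver $M(Q)$; conversely a source is $\preceq$-minimal, and dually for sinks. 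Totality then rules out having more than one source or sink.

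The main content is $(2)\Rightarrow(3)$. Letting $s$ be the unique source and $t$ the unique sink of $M(Q)$, I would induct on $k=\#M(Q)_0$, the base case being trivial. In the inductive step the goal is to show that $s$ has a unique immediate successor $v_2$ in $M(Q)$, so that $M(Q)\setminus\{s\}$ again has a unique source and a unique sink; the inductive hypothesis would then furnish a Hamiltonian path on this smaller quiver, to be prepended by the arrow $s\to v_2$. The principal obstacle is precisely the justification of this step: in a generic finite acyclic quiver one source and one sink do not force a Hamiltonian path (witness the diamond $a\to b,\ a\to c,\ b\to d,\ c\to d$), so the argument must genuinely exploit the fact that $M(Q)$ arises as the member of $Q$. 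Concretely, I expect to have to use that vertices of $M(Q)$ are never on cycles of $Q$ and that the $\preceq$-relations among them come from paths of $Q$ that, whenever they leave $M(Q)$ through $C(Q)$, cannot re-enter $M(Q)$ without producing a cycle through a member vertex; this structural constraint is what I expect to rule out diamond-like branchings inside $M(Q)$ under hypothesis $(2)$, and making this precise will be the technical heart of the proof.
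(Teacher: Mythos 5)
The paper offers no argument for this remark at all---it is stated as a bare observation---so there is nothing of the authors' to compare your proof against; what follows is an assessment of your attempt on its own terms. Your implication $(3)\Rightarrow(1)$ is correct, and it is in fact the only part of the remark that the paper later uses (in the proposition relating maximal $\phi$-dimension to total order on the member). The remaining implications, however, run into problems that are not entirely of your making: they reflect genuine defects in the statement itself.

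The gap you honestly flag in $(2)\Rightarrow(3)$ cannot be closed. The diamond $a\to b$, $a\to c$, $b\to d$, $c\to d$ that you offer as the obstruction \emph{is} realised as a member: take $Q$ to be the diamond itself. It has no cycles, so $\Omega^{\# Q_0}(A_0)=0$, the heart is empty and $M(Q)=Q$. This quiver has exactly one source and one sink, no path of length $3$, and $b,c$ incomparable; thus $(2)$ holds while $(1)$ and $(3)$ fail, and there is no hidden structural constraint coming from ``being a member'' that rules this out. Separately, your step $(1)\Rightarrow(2)$ relies on the claim that a source of the subquiver $M(Q)$ is $\preceq$-minimal; this is false, because $\preceq$ is defined by paths in all of $Q$, and such a path between two member vertices may pass through the heart. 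For $Q$ with arrows $1\to 2$, a loop at $2$, and $2\to 3$, the heart is $\{2\}$ and the member is the discrete quiver on $\{1,3\}$; it is totally ordered ($1\prec 3$ via $1\to 2\to 3$) yet, as a subquiver, has two sources, two sinks and no arrow at all, so here $(1)$ holds while $(2)$ and $(3)$ fail. The remark is therefore false as literally stated; only $(3)\Rightarrow(1)$ survives in general, and any correct version of the other implications requires either a reinterpretation of ``sink'', ``source'' and ``path on the member'', or the additional hypotheses (maximal $\phi$-dimension, nonempty member) in force where the remark is actually invoked.
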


\noindent Using the remark above we get the following proposition.

\begin{prop}\label{member ordenado}

If $M(Q)$ is not empty and $\fidim (A)=n-1$ ($A$ has maximal $\phi$-dimension) then $(M(Q)_0,\ \preceq)$ is a total order.
\end{prop}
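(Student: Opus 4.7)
My plan is to split the argument into the cases $\gd(A)<\infty$ and $\gd(A)=\infty$. In the finite global dimension case the hypothesis gives $\gd(A) = \fidim(A) = n-1$; since the projective dimension of a simple over a radical square zero algebra equals the length of the longest oriented path starting at the corresponding vertex, $Q$ must contain a Hamiltonian path. Such a path in particular makes $Q$ acyclic, so $C(Q) = \emptyset$, $M(Q) = Q$, and the Hamiltonian path gives the desired total order on $M(Q)_0$.

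Assume from now on that $\gd(A) = \infty$. Proposition \ref{phimembermaximal} gives $\fidim(A)\leq n-k$, so $k\leq 1$; the hypothesis $M(Q)\neq\emptyset$ combined with Proposition \ref{member} excludes $k=0$, hence $k=1$. By Theorem \ref{izq=der} I may assume the unique element of $\mathcal{S}_P\cup\mathcal{S}_I$ is a source $s$, i.e.\ $Q$ has no sinks. Every vertex of $Q$ then reaches an oriented cycle, so $M(Q)_0 = \{v:\mathrm{in}(v)=0\}$; moreover every in-neighbour of a member vertex again lies in the member, so $s$ is the unique source of the subquiver $M(Q)$. Using Proposition \ref{explicito}, the hypothesis rewrites as $\phi(\oplus_{S\in\mathcal{S}_D}S) = n-2$, and since $|\mathcal{S}_D| = n-1$, Proposition \ref{invariante}(2) forces the ranks of successive powers of $\overline{\Omega}|_{\langle\add\mathcal{S}_D\rangle}$ to strictly decrease by exactly one at each step, from $n-1$ at step $0$ to $1$ at step $n-2$ (the value $0$ being excluded because $\gd(A)=\infty$ rules out nilpotency of the restriction).

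The core of the argument is a contrapositive: suppose $(M(Q)_0,\preceq)$ is not a total order. By the Remark preceding the statement together with the uniqueness of $s$ as source of $M(Q)$, the subquiver $M(Q)$ must have at least two distinct sinks $t_1,t_2$. Walking back the two paths $s\to\cdots\to t_1$ and $s\to\cdots\to t_2$ inside $M(Q)$ one finds a branching vertex $w\in M(Q)_0$ with two distinct outgoing arrows $w\to u_1$ and $w\to u_2$ inside $M(Q)$, with $u_i$ on the path to $t_i$. The plan is to follow these two branches under iteration of $\overline{\Omega}$ and to show that at the step $\ell$ at which both branches have been absorbed into $C(Q)$, two independent new linear dependences appear simultaneously among the iterated images, producing a rank drop of at least two at a single step and contradicting the strict decrease-by-one established above. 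The main obstacle is making this rank-drop count rigorous: one must verify that the two relations produced by the branches at $w$ are genuinely independent and arise at the same step rather than being distributed across several steps in a way still compatible with strict decrease by one. I would attack this by organising $M(Q)_0$ into levels according to distance from $s$ and tracking, level by level, how many new kernel vectors of $\overline{\Omega}$ on the iterated images appear; strict decrease by one at every step then forces each level to contain exactly one vertex, which is equivalent to $M(Q)$ being a single oriented path.
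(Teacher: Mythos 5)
Your setup is sound and genuinely different from the paper's: the reduction to $k=1$ via Proposition \ref{phimembermaximal}, the appeal to Theorem \ref{izq=der} to assume the exceptional vertex is a source, and the observation (via Propositions \ref{explicito} and \ref{invariante}) that maximality forces $rk(\overline{\Omega}^{\ell}|_{\langle\add(\oplus_{S\in\mathcal{S}_D}S)\rangle})=n-1-\ell$ for $0\leq\ell\leq n-2$ are all correct. The problem is that the decisive step --- that a branching vertex of $M(Q)$ violates this rank sequence --- is exactly the one you leave as a plan (``the main obstacle is making this rank-drop count rigorous\dots I would attack this by\dots''), and it is not routine. The heart produces its own rank drops, interleaved unpredictably with those coming from the member, so two member-born relations need not collide at a single step; worse, in the simplest branching configuration the obstruction is not a double drop at all. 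In the diamond $s\to a$, $s\to b$, $a\to t$, $b\to t$ the kernel element $[S_a]-[S_b]$ of $\overline{\Omega}$ does not lie in $\overline{\Omega}\langle\add(\oplus_{S\in\mathcal{S}_D}S)\rangle$, because the only arrows into $a$ and $b$ start at the source $s$, whose simple is not in $\mathcal{S}_D$; so the rank sequence fails for a surjectivity reason that a level-by-level count of new kernel vectors does not detect. Until this bookkeeping is actually carried out, the proof is incomplete at its central point.

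For comparison, the paper avoids rank bookkeeping entirely: if $M(Q)$ (with $k$ vertices) is not totally ordered, the longest path in $M(Q)$ not starting at the source has length at most $k-3$, so $\Omega^{k-2}(\oplus_{S\in\mathcal{S}_D}S)$ is supported on the heart; then $\phi(\oplus_{S\in\mathcal{S}_D}S)\leq\phi(\Omega^{k-2}(\oplus_{S\in\mathcal{S}_D}S))+k-2$, and the heart term is computed in the subalgebra $\frac{\mathbb{K}C(Q)}{J^2}$ (legitimate by the triangular form and Remark \ref{extensionconocida}) and bounded by $n-k-1$ via Proposition \ref{cotazocalo}, giving $\phi(\oplus_{S\in\mathcal{S}_D}S)\leq n-3$ and $\fidim(A)=n-2$, a contradiction. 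Two smaller slips in your text: a Hamiltonian path does not make $Q$ acyclic --- you need $\gd(A)<\infty$ to force $Q$ acyclic first, after which a path of length $n-1$ is automatically Hamiltonian; and $M(Q)_0\neq\{v:\mathrm{in}(v)=0\}$ in general (for $1\to2\to\cdots\to n$ with a loop at $n$ the member is $\{1,\dots,n-1\}$), although only the predecessor-closedness of $M(Q)$ is actually used afterwards.
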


\begin{proof}
\noindent We have to show that if  $v_1$, $v_2$ are two distinct vertices on the member, then there is a path from $v_1$ to $v_2$ if and only if there is no path from  $v_2$ to $v_1$. Since the former lemma states that the relation is an order on $M(Q)$ we already have that, if there is a path from  $v_1$ to $v_2$ then there is no path from $v_2$ to $v_1$. Next we show that in case there is no path from $v_1$ to $v_2$ then there is a path from $v_2$ to $v_1$.
\begin{enumerate}
  \item Assume first that $A$ has finite global dimension. In this case the quiver of $A$ has a path of length $n-1$, thus the order is total.
  \item It remains to be considered the algebras $A$ with infinite global dimension. In this case the number of sources added with the number of sinks is 1.\\
 Consider the case where there is only one source $v_0$, and no sink.
         Assume there are vertices $v_1$ and $v_2$ in  $M(Q)$ which are not connected by a path. If  $M(Q)$ has $k$ vertices,  the largest path in $M(Q)$ which does not starts in $v_{0}$, has length at most $k-3$ (otherwise the order on the member would be total). This implies that $\Omega^{k-2}(\oplus_{S \in \mathcal{S}_D}S)$ is a direct summand of  $\oplus_{w \in  C(Q)}S_w$.\\
    Applying, various times, Lemma 3.4 of  \cite{kn:hlm1} we get the following inequality:
       $$\phi(\oplus_{S \in \mathcal{S}_{D} }S) \leq \phi(\Omega^{k-2}(\oplus_{S \in \mathcal{S}_{D}}S)) + k-2$$
       Consider now the radical squared zero algebra $\Gamma$ determined by the heart  $ C(Q)$,  ($\Gamma = \frac{\mathbb{K}  C(Q)}{J^2}$). We obtain, using Proposition \ref{cotazocalo}, that  $\phi_{\Gamma}(\oplus_{ w\in  C(Q)} S_{w})$ $\leq n-k-1$. We see that  $A$ is an algebra which can be put in the triangular form below:
       $$A = \left(
                                \begin{array}{cc}
                                  S & M \\
                                  0 & \frac{\mathbb{K}  C(Q)}{J^2} \\
                                \end{array}
                              \right).$$

\noindent  and using the Remark  \ref{extensionconocida} we get $\phi_{A}(\oplus_{ w\in  C(Q)} S_{w}) = \phi_{\Gamma}(\oplus_{ w\in  C(Q)} S_{w})$. So we obtain the expression:

       $$\phi_{\Gamma}(\oplus_{S \in \mathcal{S}_{D} }S) \leq n-k-1 + k-2 = n-3.$$

        Using the Proposition  \ref{explicito},  we get $\phi \dim (A) = n-2$,  which is a contradiction with the hypothesis.\\
Finally, the case with one sink and no source, is reduced to the former case using Theorem \ref{izq=der}.\end{enumerate}
\vspace{-0.55cm}
\end{proof}

\subsection{Quivers without member}
We recall that the $\phi \dim (A)$ is at most the number of vertices of $Q$, and equality implies that  the quiver of $A$ does not have a member. We will exhibit in the Example \ref{phimaximal} an infinite family of algebras without member and maximal $\phi$-dimension, that is an infinite family of algebras with $\phi$-dimension equal to the number of vertices. \\
In the case of an algebra $A$ without member we have that the adjacency matrix of the quiver $Q$ coincides with the associated matrix of the linear transformation $\overline{\Omega} |_{K_1}$ in the base  $\{[S]\}_{S\in \mathcal{S}}$ of  $K_1$. We will say that $A$ has algebraic multiplicity and geometric multiplicity $m_a$ and $m_g$ respectively if these are the corresponding multiplicities of the adjacency matrix. \\
The following remark is useful to compute the  $\phi$-dimension:

\begin{obs}\label{lineal}
Considering the   $\mathbb{C}$-linear map:
$$\overline{\Omega}{|}_{\langle \add M\rangle}\otimes_{\mathbb{Z}} \mathbb{C}: \mathbb{C}^{rk(\langle \add M\rangle)}\rightarrow \mathbb{C}^{rk(\langle \add M\rangle)}$$

\noindent we have $rk (\overline{\Omega}{|}_{\langle \add M\rangle}\otimes_{\mathbb{Z}} \mathbb{C}) = rk(\overline{\Omega}{|}_{\langle \add M\rangle})$.
\end{obs}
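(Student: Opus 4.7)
The plan is to reduce the statement to the flatness of $\mathbb{C}$ as a $\mathbb{Z}$-module together with the fact that subgroups of free abelian groups are free. First I would record the setup given by Remark \ref{positivecone}: the subgroup $\langle \add M\rangle \subseteq K_0$ is a free abelian group, with basis given by the isomorphism classes of indecomposable non-projective summands of $M$. Consequently $\langle \add M\rangle \otimes_{\mathbb{Z}} \mathbb{C} \cong \mathbb{C}^{rk(\langle \add M\rangle)}$, so the $\mathbb{C}$-linear map in the statement is well-defined.

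Next I would apply flatness. Since $\mathbb{C}$ is torsion-free over $\mathbb{Z}$, it is flat; hence tensoring the short exact sequence
$$0 \longrightarrow \ker\bigl(\overline{\Omega}|_{\langle \add M\rangle}\bigr) \longrightarrow \langle \add M\rangle \longrightarrow \mathrm{Im}\bigl(\overline{\Omega}|_{\langle \add M\rangle}\bigr) \longrightarrow 0$$
with $\mathbb{C}$ produces a short exact sequence of $\mathbb{C}$-vector spaces, and in particular
$$\mathrm{Im}\bigl(\overline{\Omega}|_{\langle \add M\rangle} \otimes_{\mathbb{Z}} \mathbb{C}\bigr) \;\cong\; \mathrm{Im}\bigl(\overline{\Omega}|_{\langle \add M\rangle}\bigr) \otimes_{\mathbb{Z}} \mathbb{C}.$$

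Finally, $\mathrm{Im}\bigl(\overline{\Omega}|_{\langle \add M\rangle}\bigr)$ sits inside the free abelian group $K_0$, so it is itself free of rank equal to $rk(\overline{\Omega}|_{\langle \add M\rangle})$. Tensoring this free abelian group with $\mathbb{C}$ gives a $\mathbb{C}$-vector space whose dimension matches that rank, which is precisely the asserted equality.

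There is essentially no obstacle here; the only point one must be mindful of is that all groups in sight are free (being subgroups of the free abelian group $K_0$), so there is no torsion that would spoil the identification between the $\mathbb{Z}$-rank and the $\mathbb{C}$-dimension after extending scalars.
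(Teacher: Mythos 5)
Your proof is correct; the paper states this remark without any proof, and your argument (flatness of $\mathbb{C}$ over $\mathbb{Z}$, plus the fact that subgroups of the free abelian group $K_0$ are free, so that $\mathbb{Z}$-rank of the image turns into $\mathbb{C}$-dimension after extension of scalars) is precisely the standard justification the authors leave implicit. The only point worth flagging is that writing the map as an endomorphism of $\mathbb{C}^{rk(\langle \add M\rangle)}$ tacitly uses $\Omega(M)\in \add M$ as in Remark \ref{Omega}, but your rank computation does not depend on this.
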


\noindent In the next example we exhibit a family of radical square zero algebras with $n$ vertices such that $\phi \dim(A(n))= n$.

\begin{ej}\label{phimaximal}
\emph{Consider a radical square zero algebra whose quiver has the adjacency $\mathfrak{M}_n$ given below:}
$$\mathfrak{M}_n=\left(
  \begin{array}{ccccccc}
    1 & 0 & 0 & 0 & \ldots & 0 & a_1 \\
    1 & 1 & 0 & 0 & \ldots & 0 & a_2 \\
    0 & 1 & 1 & 0 & \ldots & 0 & a_3 \\
    0 & 0 & 1 & \ddots &  & \vdots & \vdots  \\
    \vdots & \vdots &  & \ddots & 1 & 0 & a_{n-2}\\
    0 & 0 & 0 & \ldots & 1 & 1 &a_{n-1} \\
    0 & 0 & 0 & \ldots & 0 & 1 & a_n \\
  \end{array}
\right)$$

\emph{\noindent A vector  $(x_1, x_2, \ldots, x_n)$  in the kernel of the linear transformation verify the following equations:}

\begin{itemize}
  \item $x_1 + a_1x_n = 0$,
  \item $\vdots$
  \item $x_j + x_{j+1} + a_{j+1}x_n=0$,
  \item $\vdots$
  \item $x_{n-2} + x_{n-1}+ a_{n-1}x_n=0$,
  \item $x_{n-1} + a_nx_n =0$.
\end{itemize}
\emph{\noindent Which means that we have:}

\begin{itemize}
  \item $x_j = (\sum_{i = 1}^j (-1)^{j+i-1}a_i)x_n$ para $j = 1, 2 \ldots, n-1$,
  \item $x_{n-1} = -a_nx_n$.
\end{itemize}
\emph{\noindent Observe that the first two columns of $(\mathfrak{M}_n)^{n-2}$ are:\\}

$$\begin{array}{cc}
  \left(
     \begin{array}{c}
       C_0^{n-2} \\
       C_1^{n-2} \\
       C_2^{n-2} \\
       \vdots \\
       C_{n-3}^{n-2} \\
       C_{n-2}^{n-2} \\
       0 \\
     \end{array}
   \right)
   & \left(
        \begin{array}{c}
          0 \\
          C_0^{n-2} \\
          C_1^{n-2} \\
          \vdots \\
          C_{n-4}^{n-2} \\
          C_{n-3}^{n-2} \\
          C_{n-2}^{n-2} \\
        \end{array}
      \right)
\end{array}
$$\\

\emph{\noindent We require that  $Ker\ \mathfrak{M}_n \subset Im((\mathfrak{M}_n)^{n-2}$,  and this can be obtained when the kernel is constructed from the two vectors above. Since  $C_{0}^{n-2} = C_{n-2}^{n-2} = 1$  then $a_j$ satisfy the following conditions:}
\begin{itemize}
    \item $a_2 = a_1(C_1^{n-2}+1) - C_0^{n-2}$
   % \item $a_3 = a_1(C_2^{n-2}+C_{1}^{n-2}) - (C_1^{n-2}+1)$
    \item $\vdots$
    \item $a_j = a_1(C_{j-1}^{n-2}+C_{j-2}^{n-2}) - (C_{j-2}^{n-2}+C_{j-3}^{n-2})$
    \item $\vdots$
   % \item $a_{n-1} = a_1(C_{n-2}^{n-2}+C_{n-3}^{n-2})- (C_{n-3}^{n-2}+C_{n-4}^{n-2})$
    \item $a_n = a_1 - (n-2)$.
\end{itemize}
\emph{So  we take $a_1> n-2$, then each  $a_j$ with $j = 2, \ldots, n$ will be positive and we have that $\mathfrak{M}_n$ is an adjacency matrix of a quiver $Q$, such that:}

\begin{enumerate}
  \item \emph{It has no member,}
  \item $\phi(A_0) = n-1$
\end{enumerate}
\emph{\noindent Finally we get that $\fidim(A(n)) = n$.}
\end{ej}

\begin{coro}
Let $A$ be a radical square zero algebra with $\phi$-dim$(A)=n$. Then $Q$ has no member.
\end{coro}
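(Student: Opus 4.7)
The plan is to argue by contraposition: assume $M(Q)\neq \emptyset$ and deduce $\phi\dim(A)\leq n-1$.

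First I would dispose of the finite global dimension case. If $\gd(A)<\infty$ then $\phi\dim(A)=\gd(A)$ (this is part of the chain of inequalities $\fin(A)\leq \phi\dim(A)\leq \psi\dim(A)\leq \gd(A)$ stated in the introduction, with equality throughout in the finite global dimension case). Since $A$ is radical square zero, Remark \ref{fgd} gives $\gd(A)\leq n-1$, and thus $\phi\dim(A)\leq n-1 <n$. So if $\phi\dim(A)=n$, then $A$ necessarily has infinite global dimension.

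Next, still under the assumption $M(Q)\neq \emptyset$, I would invoke Proposition \ref{member}: the nonemptiness of the member is equivalent to $Q$ having at least one sink or one source, which in turn is equivalent to $\mathcal{S}_P\cup \mathcal{S}_I\neq \emptyset$. Hence $k:=\#(\mathcal{S}_P\cup \mathcal{S}_I)\geq 1$. Then Proposition \ref{phimembermaximal} applies (since $A$ has infinite global dimension and $M(Q)\neq \emptyset$) and yields
\[
\phi\dim(A)\leq n-k\leq n-1,
\]
again contradicting $\phi\dim(A)=n$.

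Combining both cases, the assumption $M(Q)\neq \emptyset$ forces $\phi\dim(A)\leq n-1$, so the hypothesis $\phi\dim(A)=n$ forces $M(Q)=\emptyset$. There is no real obstacle here; the corollary is essentially a bookkeeping statement assembling Remark \ref{fgd}, Proposition \ref{member}, and Proposition \ref{phimembermaximal}. The only thing to be careful about is the finite global dimension case, which is not covered by Proposition \ref{phimembermaximal} (whose hypothesis explicitly requires infinite global dimension) and must be handled separately via the bound $\gd(A)\leq n-1$.
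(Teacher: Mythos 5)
Your proof is correct and follows essentially the same route as the paper, whose own proof is a one-line appeal to Proposition \ref{phimembermaximal}. You are in fact more careful than the paper: you explicitly treat the finite global dimension case (which Proposition \ref{phimembermaximal} does not cover, since its hypothesis requires infinite global dimension) via $\gd(A)\leq n-1$ from Remark \ref{fgd}, and you correctly use Proposition \ref{member} to see that a nonempty member forces $k\geq 1$.
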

\begin{proof}
This is clear using Proposition \ref{phimembermaximal} and considering the family of the Example \ref{phimaximal}.\end{proof}

\noindent Using the Proposition  \ref{lineal} we get the following result:

\begin{teo}\label{forma de jordan 2}
Let $A$ be a radical square zero algebra without member and  $n$ vertices, with $n \geq 2$. Then $\phi \dim(A) = m+1$ where $m$  is the size of the biggest Jordan block associated to $0$.\end{teo}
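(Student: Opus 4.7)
The plan is to translate the computation of $\phi\dim(A)$ into the stabilization index of the powers of the adjacency matrix $\mathfrak{M}$ of $Q$, and then read the answer off the Jordan form. Since $A$ has no member, Proposition \ref{member} forces $\mathcal{S}_P = \mathcal{S}_I = \emptyset$, so $\mathcal{S}_D = \mathcal{S}$ and $\langle \add A_0\rangle$ is free abelian of rank $n$ with basis $\{[S_1],\dots,[S_n]\}$. As observed in the paragraph preceding Remark \ref{lineal}, in this basis the endomorphism $\overline{\Omega}|_{\langle \add A_0\rangle}$ is represented exactly by $\mathfrak{M}$.

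Assuming $A$ is not selfinjective, Proposition \ref{explicito} gives $\phi\dim(A) = \phi(A_0) + 1$. Since $\Omega(A_0)\in\add A_0$, Proposition \ref{invariante}(2) expresses $\phi(A_0)$ as the stabilization index $\min\{l : rk(\overline{\Omega}^l|_{\langle \add A_0\rangle}) = rk(\overline{\Omega}^{l+1}|_{\langle \add A_0\rangle})\}$, which by Remark \ref{lineal} coincides with $\min\{l : rk(\mathfrak{M}^l) = rk(\mathfrak{M}^{l+1})\}$ computed over $\mathbb{C}$. A standard Jordan-form calculation finishes this step: Jordan blocks of $\mathfrak{M}$ with nonzero eigenvalue are invertible and contribute a rank independent of the power, whereas a block $J_k(0)$ contributes $\max(k-l,0)$ to $rk(\mathfrak{M}^l)$; consequently $rk(\mathfrak{M}^l)$ strictly decreases for $l<m$ and stabilizes exactly at $l=m$, where $m$ is the size of the largest Jordan block for the eigenvalue $0$. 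Hence $\phi(A_0)=m$ and $\phi\dim(A)=m+1$.

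The main subtlety, and the one place where care is needed, is the selfinjective case: by Proposition \ref{autoinyectiva} this occurs precisely when $Q$ is a cycle $Z_n$, in which situation $\mathfrak{M}$ is a permutation matrix with no zero eigenvalue while $\phi\dim(A)=0$. Here Proposition \ref{explicito} is not available, so cycles must be handled separately; the formula $\phi\dim(A)=m+1$ is then reconciled with the convention that $m=-1$ whenever $0$ fails to be an eigenvalue of $\mathfrak{M}$ (equivalently, $\phi\dim(A)$ is the nilpotency index of the restriction of $\mathfrak{M}$ to its generalised $0$-eigenspace, with the empty restriction read as nilpotency index $-1$). Modulo this corner, the entire argument reduces to the passage from $\mathbb{Z}$ to $\mathbb{C}$ furnished by Remark \ref{lineal}, combined with the routine Jordan-block computation described above.
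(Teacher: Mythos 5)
Your argument is essentially the paper's own: both reduce to $\phi\dim(A)=\phi(A_0)+1$ via Corollary \ref{sinmember} (no member plus non-selfinjectivity) and then read $\phi(A_0)=m$ off the rank stabilization of the powers of the adjacency matrix in Jordan form, using Proposition \ref{invariante} and Remark \ref{lineal}. Your explicit flagging of the selfinjective case ($Q=Z_n$, where $0$ is not an eigenvalue and the formula needs a convention) is a point the paper's proof silently skips, and it is a legitimate caveat about the statement rather than a defect of your argument.
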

\begin{proof}
Let  $\{v_1, \ldots, v_m, v_{m+1}, \ldots,v_{m+k}, v_{m+k+1}, \ldots, v_n\}$ be a Jordan basis for the adjacency matrix $\mathfrak{M}$ of $A$,
where the vectors $\{v_1, \ldots, v_{m+k}\}$ are those which determines a base of the proper subspace associated  with  $0$. We fix $\{v_1, \ldots, v_{m}\}$  as the vectors associated to the  block of size  $m$ and the  vectors $\{v_{m+k+1}, \ldots, v_n\}$  are those which form a base of the proper subspaces associated with the non zero eigenvalues.\\
Since $\mathfrak{M}^{m+l}(v_i) = 0$ for each  $l \geq 0$ and each $1 \leq i \leq m+k$, then  $Im\ \mathfrak{M}^{m+l} = \langle\{ v_{m+k+1}, \ldots, v_n\}\rangle $ for each $l\geq 0$. Moreover since $\mathfrak{M}^{m-1}(v_1) = v_m$ and $\mathfrak{M}$ restricted to $Im\ \mathfrak{M}^{m+l}$ is injective for all  $l \geq 1$, we see that  $\phi(A_0) = m$, and then, using Corollary \ref{sinmember}, we get that $\phi \dim (A) = m+1$. \end{proof}

\begin{obs}\label{traza}
The trace of the adjacency matrix of a quiver $Q$ is equal to the number of loops.
\end{obs}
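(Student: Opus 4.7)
The plan is to unwind the definition of the adjacency matrix, as fixed in the subsection on quivers without member. There $\mathfrak{M}$ is identified with the matrix of $\overline{\Omega}|_{K_1}$ written in the basis $\{[S_i]\}_{i=1}^{n}$ indexed by the simple modules. By Remark \ref{fgd}, for a radical square zero algebra one has $\Omega(S_i) = \bigoplus_{\alpha\colon i\to j} S_j$, so the $(j,i)$-entry of $\mathfrak{M}$ records the number of arrows of $Q$ going from $i$ to $j$.

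Consequently, the diagonal entry $m_{ii}$ counts precisely the arrows from vertex $i$ to itself, i.e.\ the loops of $Q$ based at $i$. Summing along the diagonal yields
$$\mathrm{tr}(\mathfrak{M}) \;=\; \sum_{i=1}^{n} m_{ii} \;=\; \sum_{i=1}^{n} \#\{\text{loops of } Q \text{ at } i\} \;=\; \#\{\text{loops in } Q\},$$
which is the desired equality.

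There is no genuine obstacle: the statement is a one-line consequence of the definition of $\mathfrak{M}$, once combined with the syzygy formula of Remark \ref{fgd}. The only minor subtlety is bookkeeping of the convention (whether the $(i,j)$-entry counts arrows $i \to j$ or $j \to i$), and this ambiguity is invisible on the diagonal, so the trace computation is independent of that choice. The remark is presumably recorded for later use, since it connects a combinatorial feature of $Q$ (the number of loops) to spectral data of $\mathfrak{M}$ that appears in Theorem \ref{forma de jordan 2}.
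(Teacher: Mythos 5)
Your proof is correct; the paper states this as a remark without proof precisely because it is the immediate bookkeeping argument you give, namely that each diagonal entry of the adjacency matrix counts the loops at the corresponding vertex, so the trace counts all loops. Your observation that the convention for off-diagonal entries is irrelevant on the diagonal is accurate, and your identification of the adjacency matrix with the matrix of $\overline{\Omega}|_{K_1}$ via Remark \ref{fgd} matches how the paper sets things up in the subsection on quivers without member.
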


\begin{teo}\label{forma de jordan}
Let  $A$ be a radical square zero algebra with with $n$ vertices and $n \geq 2$, then  $\fidim(A) = n$  if and only if $A$ has no member and the Jordan form of the adjacency matrix of $Q$, $\mathfrak{M}_Q$, has the form:

$$\left(
    \begin{array}{ccccccc}
      0 & 0 & \ldots & 0 & 0 & 0 & 0\\
      1 & 0 & \ldots & 0 & 0 & 0 & 0\\
      0 & 1 &  & 0 & 0 & 0 & 0\\
      \vdots &  & \ddots & \ddots & \vdots & \vdots &\vdots \\
      0 & 0 & \ddots & 1 & 0 & 0 & 0\\
      0 & 0 & \ldots & 0 & 1 & 0 & 0\\
      0 & 0 & \ldots & 0 & 0 & 1 & \lambda\\
    \end{array}
  \right)
$$
\noindent where $\lambda$ is not zero (in this case $\lambda$ is equal to the number of loops).
\begin{proof} Assume first that the Jordan form of the adjacency matrix of $Q$ has the form above. Then $\phi(A_0)= n-1$, therefore by  Corollary \ref{sinmember} we get $\fidim(A) = n$.\\
Next we show the other implication. \\
So we assume that $\fidim(A) = n$. It is clear that  $0$ is an eigenvalue of $\mathfrak{M}_Q$,  otherwise we would have   $\phi(A_0) = 0$ which would imply $\fidim(A) \leq 1 < n$.  It is also clear that there is a non zero  eigenvalue  $\lambda$, otherwise   $\gl(A) < \infty$ and using the Remark \ref{fgd} we would get  $\fidim(A) = \gd(A)< n$. Moreover, there is no other non zero eigenvalue distinct from  $\lambda$  and the algebraic multiplicity of $\lambda$ has to be $1$ since if this were not the case we would have $\phi(A_0)<n-1$.\\
We know that there is a Jordan block of size $1\times 1$  associated with the eigenvalue $\lambda$ and a Jordan block associates with the eigenvalue  $0$. If the Jordan block associated with  $0$ would have various sub blocks we would get that  $\phi(A_0)$ would not be $n$. So the Jordan form of $\mathfrak{M}_Q$ must be:
$$\left(
    \begin{array}{ccccccc}
      0 & 0 & \ldots & 0 & 0 & 0 & 0\\
      1 & 0 & \ldots & 0 & 0 & 0 & 0\\
      0 & 1 &  & 0 & 0 & 0 & 0\\
      \vdots &  & \ddots & \ddots & \vdots & \vdots &\vdots \\
      0 & 0 & \ddots & 1 & 0 & 0 & 0\\
      0 & 0 & \ldots & 0 & 1 & 0 & 0\\
      0 & 0 & \ldots & 0 & 0 & 1& \lambda\\
    \end{array}
  \right)\ $$\end{proof}
\end{teo}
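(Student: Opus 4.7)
The plan is to deduce the result directly from Theorem~\ref{forma de jordan 2}, which already expresses $\fidim(A)=m+1$ with $m$ the size of the largest Jordan block of $\mathfrak{M}_Q$ associated to the eigenvalue $0$, under the hypothesis that $A$ has no member. What remains is a dimension count on the Jordan decomposition, together with ruling out the nilpotent case via Remark~\ref{fgd}. For the $(\Leftarrow)$ direction the hypothesis already places a single Jordan block of size $n-1$ for the eigenvalue $0$, and $A$ is assumed to have no member, so Theorem~\ref{forma de jordan 2} gives $\fidim(A)=(n-1)+1=n$ at once.

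For the $(\Rightarrow)$ direction, I would assume $\fidim(A)=n$ and first show $M(Q)=\emptyset$. If $M(Q)$ were nonempty, then either $\gd(A)$ is finite and Remark~\ref{fgd} yields $\fidim(A)\le\gd(A)\le n-1$, or $\gd(A)=\infty$ and Proposition~\ref{phimembermaximal} yields $\fidim(A)\le n-1$; in either case we contradict $\fidim(A)=n$. Once $M(Q)=\emptyset$ is in hand, Theorem~\ref{forma de jordan 2} forces the biggest Jordan block for $0$ to have size $n-1$, accounting for $n-1$ of the $n$ diagonal entries of the Jordan form. The remaining dimension cannot also belong to the eigenvalue $0$: if it did, $\mathfrak{M}_Q$ would be nilpotent, so $\pd(A_0)$ and hence $\gd(A)$ would be finite, and Remark~\ref{fgd} would again give $\fidim(A)\le n-1$, a contradiction. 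Therefore the extra dimension must correspond to a nonzero eigenvalue $\lambda$ sitting in a $1\times 1$ block, which yields exactly the displayed Jordan form (the algebraic and geometric multiplicities of $\lambda$ both being forced to equal $1$ by dimension counting).

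The identification of $\lambda$ with the number of loops is then immediate from Remark~\ref{traza}: the trace of $\mathfrak{M}_Q$ equals the number of loops of $Q$, while from the Jordan form just obtained the trace is $0\cdot(n-1)+\lambda=\lambda$. I do not foresee a genuine obstacle in carrying this out; the proof is essentially structural, and the only delicate step is the exclusion of the nilpotent case, which is precisely where the bound $\gd(A)\le n-1$ for radical square zero algebras of finite global dimension (Remark~\ref{fgd}) has to be invoked to prevent the extra dimension from being absorbed into the Jordan block for $0$.
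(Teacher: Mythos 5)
Your proof is correct and follows essentially the same route as the paper: the paper's argument also reduces to showing $M(Q)=\emptyset$, that $0$ is an eigenvalue with a single Jordan block of size $n-1$, and that exactly one nonzero eigenvalue $\lambda$ of multiplicity one remains, with $\lambda$ identified as the number of loops via the trace. Your only organizational difference is that you channel the eigenvalue bookkeeping through Theorem~\ref{forma de jordan 2} and a dimension count, which is exactly the content the paper's proof uses implicitly, so there is nothing to add.
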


\vspace{5mm}

\noindent We look now for necessary conditions in order to have the $\phi$-dimension equals to the number of vertices.

\begin{defi}
Given a quiver $Q$, we call \textbf{starting degree (arriving degree)} of a vertex $v$ the number of arrows starting (arriving)  at   $v$.  The quiver $Q$ will be called  \textbf{regular at starting (arriving)} if the starting (arriving) degree  is the same for all vertices of  $Q$, in this case we call this common number the \textbf{starting (arriving) degree of $Q$}.
\end{defi}

\noindent The following proposition give us a necessary condition to have maximal $\phi$-dimension in quivers with regular starting degrees.

\begin{prop}
If $Q$ has $n$ vertices, no member and it is regular at starting then  $\phi \dim (A)=n$ implies that the number of loops is equal to the starting degree of $Q$.
\begin{proof} Let $\mu$ be the starting degree of  $Q$. It is clear that the  vector $(1, \ldots, 1)$ is an eigenvector associated to the eigenvalue $\mu$. If the number of loops do not coincides with the starting degree we would have two eigenvectors associated to eigenvalues which are not zero. This contradicts Theorem  \ref{forma de jordan}.
\end{proof}
\end{prop}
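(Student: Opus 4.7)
The plan is to reduce the statement to an eigenvalue observation for the adjacency matrix $\mathfrak{M}_Q$ and then quote Theorem \ref{forma de jordan}. First I would recall from the convention fixed in Example \ref{phimaximal} that the $(j,i)$-entry of $\mathfrak{M}_Q$ counts the arrows $i\to j$, so the sum of the $i$-th column of $\mathfrak{M}_Q$ is exactly the starting degree of the vertex $i$.

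Using regularity at starting with common degree $\mu$, every column of $\mathfrak{M}_Q$ then sums to $\mu$. Equivalently, the row vector $(1,1,\ldots,1)$ is a left eigenvector of $\mathfrak{M}_Q$ with eigenvalue $\mu$, so $\mu$ belongs to the spectrum of $\mathfrak{M}_Q$. Since $Q$ has no member, Proposition \ref{member} guarantees that $Q$ has no source; in particular every vertex has starting degree at least one, so $\mu\geq 1$ and $\mu$ is a \emph{nonzero} eigenvalue of $\mathfrak{M}_Q$.

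Next I would invoke Theorem \ref{forma de jordan}: the assumption $\phi\dim(A)=n$, together with $M(Q)=\emptyset$, forces the Jordan form of $\mathfrak{M}_Q$ to have the eigenvalue $0$ with multiplicity $n-1$ and a unique other eigenvalue $\lambda\neq 0$ of algebraic multiplicity one. By Remark \ref{traza}, $\lambda$ coincides with the trace of $\mathfrak{M}_Q$, which is the number of loops of $Q$. Since $\mu$ is a nonzero eigenvalue of $\mathfrak{M}_Q$ and $\lambda$ is the only such, necessarily $\mu=\lambda$, i.e.\ the starting degree of $Q$ equals the number of loops.

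There is no real obstacle here; the argument is essentially a two-line assembly of previously proved results. The only minor points I would double-check are the column-versus-row convention for $\mathfrak{M}_Q$ (so that regularity at starting really translates into column sums being $\mu$) and that $\mu$ is genuinely nonzero, which the absence of a member takes care of via Proposition \ref{member}.
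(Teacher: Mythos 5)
Your proof is correct and follows essentially the same route as the paper's: exhibit $(1,\ldots,1)$ as an eigenvector for the nonzero eigenvalue $\mu$ and note that Theorem \ref{forma de jordan} leaves room for only one nonzero eigenvalue, namely the number of loops. You are in fact slightly more careful than the paper, which leaves implicit both the left-versus-right eigenvector convention and the verification (via the absence of sources) that $\mu\neq 0$.
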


\noindent We give next a necessary condition  in order to have the $\phi$-dimension equals to the number of vertices in terms of final (initial) subhearts. We show that a radical square zero algebra $A$ which associated quiver $Q$ has proper final (initial) subhearts does not get maximal $\phi$ dimension.

\begin{prop}\label{subcorazon final}
If $Q$ has no member and there is a proper final (initial) subheart $Q'$, then $\fidim(A) < n$.

\begin{proof} Assume that the final subheart  $Q'$ has $k$ vertices ($\# Q'_0 = k$)  and let $Q''$ be the full subquiver whose vertices are  those of $Q_0\setminus Q'_0$. Because there are no paths from $Q'$ to  $Q''$, since $Q'$ is a final subheart  of $Q$, the adjacency matrix $\mathfrak{M}_Q$ of $Q$ has the following form:
$$\left(
    \begin{array}{cc}
      \mathfrak{M}_{Q''} & \mathfrak{L} \\
      0 & \mathfrak{M}_{Q'} \\
    \end{array}
  \right)
$$

\noindent where  $\mathfrak{M}_{Q''}$ and $\mathfrak{M}_{Q'}$ are the matrices of $Q''$ and $Q'$, respectively. Therefore the
characteristic polynomial $p_Q(x)$ of  $Q$ is such that  $p_Q(x) = p_{Q'}(x)p_{Q''}(x)$ where $p_{Q'}(x)$ and $p_{Q''}(x)$ are the characteristic polynomials of $Q'$ and $Q''$ respectively.
\noindent Because $Q$ has no member, for each vertex $v$ of $Q''$ there are paths with arbitrary length arriving at $v$. Since there is no paths from $Q'$ to  $Q''$, we get that $\mathfrak{M}_{Q''}^{n-k}\neq 0$, so  $p_{Q''}$ has a nonzero  root $\lambda$.  Since $Q'$ is a final subheart we get that $\mathfrak{M}_{Q'}^k\neq 0$ which implies that $p_{Q'}$ also has a nonzero root $\mu$.  Finally we have that  $p_{Q}$ has two roots (if  $\lambda \neq \mu$) or just one nonzero root with multiplicity at least $2$ (in case  $\lambda = \mu$). Using the Theorem \ref{forma de jordan} we get that  $\fidim(A)<n$. \end{proof}
\end{prop}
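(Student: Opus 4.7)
The plan is to invoke Theorem \ref{forma de jordan}, which asserts that $\fidim(A)=n$ forces the Jordan form of the adjacency matrix $\mathfrak{M}_Q$ to have exactly one nonzero eigenvalue of algebraic multiplicity $1$, with all the remaining eigenvalues equal to $0$ and collected into a single Jordan block. It therefore suffices to exhibit either two distinct nonzero eigenvalues of $\mathfrak{M}_Q$, or a single nonzero eigenvalue of algebraic multiplicity at least $2$.

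Set $k=\#Q'_0$ and let $Q''$ denote the full subquiver of $Q$ on the remaining $n-k$ vertices. By Lemma \ref{proptriviales}(5), $Q'$ is closed under successors, so no arrow starts in $Q'$ and ends in $Q''$. Ordering the vertices of $Q$ by listing those of $Q''$ first, the adjacency matrix acquires the block upper triangular form
\[
\mathfrak{M}_Q=\left(\begin{array}{cc}\mathfrak{M}_{Q''} & \mathfrak{L}\\ 0 & \mathfrak{M}_{Q'}\end{array}\right),
\]
whence $p_Q(x)=p_{Q''}(x)\,p_{Q'}(x)$.

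The key step, and the main obstacle I anticipate, is to certify that each diagonal block is non-nilpotent, so that each factor of $p_Q$ contributes a nonzero root. This is precisely where the hypothesis that $Q$ has no member enters: by Proposition \ref{member} the quiver $Q$ has neither sinks nor sources. For any $v\in Q'_0$ the absence of sinks supplies an outgoing arrow at $v$, which must land in $Q'$ by closure under successors; iterating inside $Q'$ produces paths of arbitrary length wholly inside $Q'$, so $\mathfrak{M}_{Q'}^{k}\neq 0$. Symmetrically, for any $v\in Q''_0$ the absence of sources provides incoming paths of arbitrary length ending at $v$, and such a path cannot cross from $Q'$ into $Q''$ (again by closure of $Q'$ under successors), so it lies entirely inside $Q''$, giving $\mathfrak{M}_{Q''}^{n-k}\neq 0$.

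To conclude, let $\lambda$ be a nonzero root of $p_{Q''}$ and $\mu$ a nonzero root of $p_{Q'}$. If $\lambda\neq\mu$ then $\mathfrak{M}_Q$ already has two distinct nonzero eigenvalues, while if $\lambda=\mu$ this common nonzero root has algebraic multiplicity at least $2$ in $p_Q$. Either possibility violates the Jordan form dictated by Theorem \ref{forma de jordan}, so $\fidim(A)<n$. The case of a proper initial subheart is then handled by transposing the adjacency matrix and applying Theorem \ref{izq=der}, which reduces that situation to the one just treated for $A^{op}$.
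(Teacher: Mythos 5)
Your proof is correct and follows essentially the same route as the paper: block-triangularize the adjacency matrix via the final subheart, factor the characteristic polynomial, show both diagonal blocks are non-nilpotent, and contradict the Jordan form forced by Theorem \ref{forma de jordan}. You in fact supply slightly more detail than the paper at the two points it glosses over (deriving the non-nilpotence of $\mathfrak{M}_{Q'}$ and $\mathfrak{M}_{Q''}$ from the absence of sinks and sources, and reducing the initial-subheart case to the final one by duality), which is welcome but does not change the argument.
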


\begin{defi}
We say that a quiver $Q$  is strongly connected if any pair of vertices are connected by a path.
\end{defi}

\begin{obs} \label{strongly connected}
A quiver $Q$ is strongly connected if and only if $Q$ has neither proper initial nor proper final subhearts.
\end{obs}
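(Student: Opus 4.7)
The plan is to prove the biconditional by establishing the two implications separately, the forward one directly and the reverse one by contrapositive. I will treat final subhearts and initial subhearts symmetrically, noting that the argument for one transfers to the other by applying it to $Q^{\mathrm{op}}$. The recognition criterion I will invoke is the one recorded in Lemma \ref{proptriviales}(5): a final subheart $Q'$ is closed by successors in $Q$, meaning every arrow in $Q$ leaving a vertex of $Q'_0$ lands in $Q'_0$, and dually for initial subhearts.

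For the ($\Rightarrow$) direction, I will assume $Q$ is strongly connected and show that any nonempty full subquiver $Q'$ closed by successors equals $Q$. The argument is: if some $w \in Q_0 \setminus Q'_0$ existed, then for any $v \in Q'_0$ strong connectivity would provide a path $v = v_0 \to v_1 \to \cdots \to v_k = w$, and an immediate induction on $i$ using the successor-closure of $Q'$ would force $v_i \in Q'_0$ for all $i$, contradicting $w \notin Q'_0$. Consequently $Q$ is itself the only nonempty full subquiver closed by successors, so no proper final subheart exists; the dual argument applied to $Q^{\mathrm{op}}$ rules out proper initial subhearts.

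For the ($\Leftarrow$) direction I will prove the contrapositive: if $Q$ is not strongly connected, I will manufacture a proper final subheart. I pick $v, w \in Q_0$ with no path from $v$ to $w$, and set
$$S = \{v\} \cup \{u \in Q_0 : \text{there exists a path in } Q \text{ from } v \text{ to } u\}.$$
Concatenation of paths shows that the full subquiver $Q_S$ on $S$ is closed by successors, while $w \notin S$ guarantees $Q_S \subsetneq Q$. Since $Q$ is finite, the family of nonempty full subquivers of $Q$ that are contained in $Q_S$ and closed by successors admits a minimal element $Q''$; by definition $Q''$ is a final subheart of $Q$, and $Q'' \subseteq Q_S \subsetneq Q$ makes it proper.

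The proof is essentially routine graph-theoretic bookkeeping and I do not anticipate any real obstacle. The only point to watch is that the minimal subquiver produced in the contrapositive is \emph{genuinely} proper; this is handled automatically by restricting the family to be minimised to subquivers of the proper $Q_S$ and by working with nonempty full subquivers, in accordance with the convention of Lemma \ref{proptriviales}(3), where a single sink already counts as a final subheart.
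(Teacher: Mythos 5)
Your argument is correct. The paper states this remark without proof, so there is nothing to compare against; your two implications (successor-closed subquivers absorb the endpoints of directed paths under strong connectivity, and the successor-closure of a vertex $v$ with no path to some $w$ yields a proper successor-closed subquiver containing a minimal, hence final, subheart) are exactly the routine verification the authors leave to the reader, and you correctly handle the one point of convention, namely that final subhearts are taken among \emph{nonempty} full subquivers, consistent with Lemma \ref{proptriviales}(3).
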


Using Proposition \ref{subcorazon final} and Remark \ref{strongly connected} we get the following result:

\begin{coro} \label{sucorazon final fuertemente conexo}
If $\fidim (A)=n$ then  $Q$ is strongly connected.
\end{coro}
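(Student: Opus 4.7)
The plan is to argue by contraposition using the combinatorial/homological bounds already collected in the paper, reducing the corollary to a direct application of Proposition \ref{subcorazon final} and Remark \ref{strongly connected}.

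First I would verify that the hypothesis $\fidim(A)=n$ forces $M(Q)$ to be empty. Indeed, if $A$ has finite global dimension then Remark \ref{fgd} gives $\fidim(A)\le\gd(A)\le n-1<n$, a contradiction; while if $A$ has infinite global dimension and $M(Q)\ne\emptyset$, Proposition \ref{phimembermaximal} yields $\fidim(A)\le n-k$ with $k=\#(\mathcal{S}_P\cup\mathcal{S}_I)\ge 1$ (the nonempty member guarantees either a sink or a source, hence a simple projective or injective), so again $\fidim(A)<n$. Hence the only surviving case is $M(Q)=\emptyset$, which is the setting in which Proposition \ref{subcorazon final} applies.

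Next I would invoke Proposition \ref{subcorazon final} directly: since $Q$ has no member, the existence of any proper final subheart or any proper initial subheart of $Q$ would force $\fidim(A)<n$, contradicting the hypothesis. Therefore $Q$ admits neither a proper final subheart nor a proper initial subheart. By Remark \ref{strongly connected}, this is precisely the condition for $Q$ to be strongly connected, and the corollary follows.

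There is no genuine obstacle here; the statement is really a bookkeeping consequence of results already in place. The only point one has to be slightly careful about is that Proposition \ref{subcorazon final} is stated for quivers \emph{without} member, so one must separately rule out the case $M(Q)\ne\emptyset$ before appealing to it; this is handled by the first paragraph using \ref{fgd} and \ref{phimembermaximal}. Once that reduction is made, the rest is immediate.
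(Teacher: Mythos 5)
Your argument is correct and follows the paper's own route: the paper likewise deduces this corollary from Proposition \ref{subcorazon final} together with Remark \ref{strongly connected}, after having established (via Proposition \ref{phimembermaximal} and the global dimension bound of Remark \ref{fgd}) that $\fidim(A)=n$ forces $M(Q)=\emptyset$. Your explicit verification that the member must be empty before invoking Proposition \ref{subcorazon final} is exactly the reduction the paper relies on, so nothing further is needed.
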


The following example shows that the reciprocal of Corollary \ref{sucorazon final fuertemente conexo} is not valid.

\begin{ej}

\emph{Consider $Q$ the following strongly connected quiver:\\}

$$ \xymatrix{ & \dot 1 \ar[rd] \ar@(l,u) & \\
               \dot 4 \ar[ur]  \ar@(l,u) &   & \dot 2 \ar[dl]  \ar@(r,u)\\
              &  \dot 3 \ar[ul]  \ar@(d,r)& }$$\\

\emph{Its adjacency matrix $M(Q)$ is:\\}

$$\left(
  \begin{array}{cccc}
    1 & 0 & 0 & 1 \\
    1 & 1 & 0 & 0 \\
    0 & 1 & 1 & 0 \\
    0 & 0 & 1 & 1 \\
  \end{array}
\right)$$

\emph{\noindent Therefore $\phi (S_1 \oplus S_2\oplus S_3 \oplus S_4) = 1$ and by Corollary \ref{sinmember} we get that  $\fidim(A) = 2$.}
\end{ej}

\begin{obs}\label{Ck}
Consider an algebra $A$ such that  $\fidim (A) = n$ where $n=|Q_0|$. Let  $C^{v}_k$ be the number of simple
cycles of length  $k$ which pass through $v$ and set $C_k = \sum_{v \in Q_0}{C^{v}_k}$ and $M_Q$ the adjacency matrix of  $Q$. The following statements are valid:

\begin{itemize}
  \item $tr(M) = C_1 = \sum_{v \in Q_0}{C^{v}_1}$
  \item $tr(M^2) = \sum_{v \in Q_0}{C^{v}_1}^2+2C_2$
  \item $tr(M^l) = \sum_{v \in Q_0}{C^{v}_1}^{l}+lC_l+H$ where $H$ is a nonzero natural number.
\end{itemize}

\end{obs}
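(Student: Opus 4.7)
The key tool throughout is the classical fact that, for the adjacency matrix $M_Q = (M_{ij})$ with $M_{ij}$ the number of arrows from $i$ to $j$, the entry $(M^l)_{ij}$ counts the directed walks of length $l$ from $i$ to $j$ in $Q$. Hence $\mathrm{tr}(M^l) = \sum_{v\in Q_0} (M^l)_{vv}$ is the total number of closed directed walks of length $l$, each counted once for its chosen starting vertex. The three identities will be obtained by classifying these closed walks combinatorially.

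For the first identity, each loop at $v$ is a closed walk of length one at $v$, so $M_{vv} = C^v_1$, and summing over $v$ gives $\mathrm{tr}(M) = C_1$. For the second, I would expand $(M^2)_{vv} = M_{vv}^2 + \sum_{u\neq v} M_{vu} M_{uv}$: the diagonal terms sum to $\sum_v (C^v_1)^2$, counting the sequences of two loops at a single vertex, while each term $M_{vu}M_{uv}$ with $u\neq v$ counts the closed walks $v\to u\to v$, i.e.\ the traversals of a simple $2$-cycle based at $v$. Upon summing over $v$, every simple $2$-cycle of $Q$ (which passes through two distinct vertices) is counted exactly twice, yielding the $2C_2$ contribution.

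For the general formula my plan is to partition the closed walks of length $l$ into three disjoint classes: (i) walks remaining at a single vertex $v$, i.e.\ sequences of $l$ loops at $v$, contributing $(C^v_1)^l$ at each $v$; (ii) walks that traverse a single simple cycle of length exactly $l$, each such cycle admitting $l$ choices of starting vertex and therefore contributing $lC_l$ in total; (iii) all remaining closed walks, which include those mixing loops with non-loop arrows, those traversing a shorter simple cycle several times, and those concatenating two simple cycles sharing a vertex; their total count is defined to be $H$. The disjointness and exhaustiveness of these classes is immediate from the definitions of loop and of simple cycle, and yields the formula with $H\in\mathbb{N}$.

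The main obstacle is verifying that $H$ is \emph{strictly} positive under the hypothesis $\fidim(A) = n$. Here one would invoke Theorem~\ref{forma de jordan}, which forces $Q$ to have no member and at least one loop (since the unique nonzero eigenvalue $\lambda$ equals $\mathrm{tr}(M_Q)$, i.e.\ the number of loops), together with Corollary~\ref{sucorazon final fuertemente conexo}, which tells us that $Q$ is strongly connected. Fixing a vertex $v_0$ carrying a loop, strong connectedness produces a non-loop closed walk based at $v_0$ of some length $k$ with $2\le k\le n$. For any $l\ge\max(3,k)$, one concatenates $l-k$ traversals of the loop at $v_0$ with the non-loop walk, producing a length-$l$ closed walk that lies in class (iii), so $H\ge 1$. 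The residual small values of $l$ would be handled by combining existing short cycles through $v_0$ in an analogous fashion.
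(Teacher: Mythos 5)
The paper states this as a remark and offers no proof at all, so there is nothing to compare against except the statement itself; your closed-walk decomposition of $\mathrm{tr}(M^l)$ is certainly the intended reading. Your treatment of the first two identities and of the decomposition underlying the third is correct, with one caveat you handle silently: the formulas are only consistent if $C_k$ denotes the \emph{number} of simple $k$-cycles, whereas the remark's literal definition $C_k=\sum_{v}C^v_k$ equals $k$ times that number for $k\ge 2$ (already the second bullet fails for the two-vertex quiver with two loops and a $2$-cycle if one uses the literal definition). You adopt the reading that makes the identities true, which is the right call, but it is worth flagging rather than suppressing. Note also that the third identity can only be meant for $l\ge 3$ (for $l=1,2$ it contradicts the first two bullets), and that $H\ge 0$ — which is all that the subsequent corollary actually uses — is immediate from your partition.

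The genuine gap is the strict positivity of $H$ for \emph{every} $l\ge 3$, and your argument does not close it. Two problems. First, at $l=k$ your construction concatenates zero loops with the chosen closed walk of length $k$; if that walk is a simple cycle (which is exactly what strong connectedness most naturally produces), the resulting walk lies in class (ii), not class (iii), so even the range you claim should be $l\ge k+1$. Second, and more seriously, for $3\le l\le k$ the phrase ``combining existing short cycles through $v_0$'' assumes short cycles that need not exist: if the shortest non-loop simple cycle of $Q$ has length $g>l$, then \emph{every} closed walk of length $l$ is a pure loop sequence (deleting the loop steps from a mixed walk would leave a non-loop closed walk of length $<g$, hence a forbidden short cycle), so $H=0$ for that $l$. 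Ruling this scenario out cannot be done from ``strongly connected plus one loop'' alone; one must feed the hypothesis $\fidim(A)=n$ back in, e.g.\ via Theorem \ref{forma de jordan}, which gives $\mathrm{tr}(M^l)=C_1^l=\bigl(\sum_v C^v_1\bigr)^l$ for all $l$, and then argue that the resulting system of identities forces loops at two distinct vertices and a non-loop simple cycle of length at most $3$ adjacent to a loop, etc. This bootstrapping is the actual content of the positivity claim (and is also why it fails trivially for $n=1$); as written, your proof establishes the third identity only with $H\ge 0$, plus $H>0$ for $l$ large relative to $k$.
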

We get as consequence the following corollary.

\begin{coro}

If $\fidim(A) = n$ with  $|Q_0| = n>1$ then $A$ has loops in more than one vertex.

\begin{proof}
Assume that  $A$ has loops only in the  vertex $v$, then by Remark \ref{traza} and Theorem \ref{forma de jordan} we have that:

$$C_1^2 = C_1^2 + 2C_2$$

\noindent and in general for each $l \in \mathbb{N}$ we get:

$$C_1^{l} = C_1^{l} + lC_l + H' $$

\noindent  where  $H'$ is bigger than zero. From the equations above we deduce that  $C_l = 0$ for each $l \in \mathbb{N}$, which cannot happens since  $n>1$ and $\fidim (A)=n$ with $n\geq 2$.

\end{proof}

\end{coro}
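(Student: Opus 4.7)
The plan is to derive a contradiction from the assumption that all loops of $A$ sit at a single vertex, by comparing the traces of powers of the adjacency matrix $\mathfrak{M}_Q$ computed two different ways: one via the Jordan form (Theorem \ref{forma de jordan}) and the other via the cycle counts of Remark \ref{Ck}.

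First I would use the hypothesis $\fidim(A)=n$ with $n>1$ to invoke Theorem \ref{forma de jordan}: the Jordan form of $\mathfrak{M}_Q$ is a single Jordan block of size $n-1$ at the eigenvalue $0$ together with a single $1\times 1$ block at a nonzero eigenvalue $\lambda$. Consequently $\text{tr}(\mathfrak{M}_Q^{l})=\lambda^{l}$ for every $l\geq 1$. By Remark \ref{traza} the trace $\lambda = \text{tr}(\mathfrak{M}_Q)$ equals the total number of loops $C_1$, so $\text{tr}(\mathfrak{M}_Q^{l}) = C_1^{l}$ for all $l\geq 1$.

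Next, arguing by contradiction, assume that all loops of $A$ are concentrated at one vertex $v$, so that $C_1^{v}=C_1$ and $C_1^{u}=0$ for $u\neq v$. Then $\sum_{u\in Q_0}(C_1^{u})^{l}=C_1^{l}$ for every $l$, and Remark \ref{Ck} yields
\[
C_1^{l} \;=\; \text{tr}(\mathfrak{M}_Q^{l}) \;=\; \sum_{u\in Q_0}(C_1^{u})^{l}+lC_l+H \;=\; C_1^{l}+lC_l+H,
\]
where $H\geq 0$ counts contributions coming from closed walks that are not simple cycles nor pure loop repetitions. Hence $lC_l+H=0$, which forces $C_l=0$ and $H=0$ for every $l\geq 2$. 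In particular there is no simple cycle of length $\geq 2$ in $Q$.

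The final step is to contradict this with the strong connectedness of $Q$. By Corollary \ref{sucorazon final fuertemente conexo}, the hypothesis $\fidim(A)=n$ implies that $Q$ is strongly connected; since $n>1$, for any vertex $u\neq v$ there are paths from $u$ to $v$ and from $v$ to $u$, and concatenating minimal such paths produces a simple cycle of length at least $2$, giving $C_l\neq 0$ for some $l\geq 2$, a contradiction. The main subtlety here is really just the bookkeeping in Remark \ref{Ck}, namely checking that the ``extra'' term $H$ is nonnegative so that the identity $lC_l+H=0$ forces $C_l=0$; once that is accepted, the contradiction with strong connectedness is immediate.
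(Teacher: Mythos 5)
Your proof is correct and follows essentially the same route as the paper: compare $\mathrm{tr}(\mathfrak{M}_Q^{l})=\lambda^{l}=C_1^{l}$ coming from Theorem \ref{forma de jordan} and Remark \ref{traza} with the cycle-count expansion of Remark \ref{Ck} under the one-vertex-loop assumption, deduce $C_l=0$ for $l\geq 2$, and contradict the existence of simple cycles of length at least $2$. You actually spell out the final contradiction (via strong connectedness from Corollary \ref{sucorazon final fuertemente conexo}) and the sign condition $H\geq 0$ more carefully than the paper does.
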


\subsection{Quotient by equitable partition}

We will use now the concept of equitable partition (see
 \cite{kn: GoR} Chapter $9.3$) to obtain results with respect to the maximality of the  $\phi$-dimension for radical square zero algebras.

\begin{defi}

Given a quiver $Q$, we say that a partition $\pi = \{C_1, \ldots, C_r\}$ of $Q_0$ is \textbf{equitable} if the number of vertices of  $C_j$ which are immediate sucessor of each vertex  $v$ of  $C_i$  is a constant $b_{ij}$.
\end{defi}

Given a quiver  $Q$  we always can define the  \textbf{trivial partition } which is given by $\pi = \{\{v\}\}_{v \in Q_0}$.
Clearly the trivial partition is equitable.

\begin{defi}

\noindent Given a quiver  $Q$ and an equitable partition  $\pi = \{C_1, \ldots, C_r\}$, we can define the \textbf{quotient quiver} $Q/\pi$ where $(Q/\pi)_0 = \{C_1, \ldots, C_r\}$ and the number of arrows from  $C_i$ to  $C_j$ is $b_{ij}$.
\end{defi}

\begin{prop}
Let $Q$ be a quiver without member, then given an equitable partition  $\pi$ we have that  $Q/\pi$ is a quiver without member.

\begin{proof} A vertex  $v$ is in the heart $C(Q)$ of a quiver $Q$ if and only if there exists a path from  $v$ to a cycle and another path from a cycle to $v$.\\

\noindent If we have a cycle $C$ in $Q$, the classes of arrows in $C$  form a cycle in $Q/\pi$, possible of smaller length. If we consider a vertex  $C_j$ in $Q/\pi$  then for each vertex in $C_j$ there  is a path to a cycle of  $Q$ and a path to it, since  $C(Q) = Q$. The previous paths give in the quotient  $Q/\pi$ a path from $C_j$ to a cycle in $Q/\pi$ and a path from a cycle in $Q/\pi$ to $C_j$ respectively.

\end{proof}

\end{prop}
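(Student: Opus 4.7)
The plan is to translate the hypothesis $M(Q)=\emptyset$ into a combinatorial condition on vertices and then check that this condition is inherited by the quotient $Q/\pi$. As the author observes, a vertex $v$ belongs to $C(Q)$ precisely when there is both a path from $v$ to some oriented cycle and a path from some oriented cycle back to $v$; this follows because, for a radical square zero algebra, a simple $S_j$ sits in the support of $\Omega^{n}(A_0)$ (for $n=\#Q_0$) exactly when some path of length $n$ ends at $j$, which by pigeonhole forces the path to traverse a cycle, and dually for $\Omega^{-n}(A_0)$. So my working characterisation will be: $Q$ has no member iff every vertex lies on a path to a cycle and on a path from a cycle.

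Next, I would verify the functorial behaviour of the quotient construction. The definition of $Q/\pi$ via an equitable partition ensures that whenever $v\to w$ is an arrow in $Q$ with $v\in C_i$ and $w\in C_j$, one has $b_{ij}\geq 1$, hence there is at least one arrow $C_i\to C_j$ in $Q/\pi$. Consequently any path $v_0\to v_1\to\cdots\to v_\ell$ in $Q$ projects to a walk $[v_0]\to[v_1]\to\cdots\to[v_\ell]$ in $Q/\pi$. In particular, the projection of any oriented cycle of $Q$ is a closed walk in $Q/\pi$, and every closed walk in a quiver contains at least one oriented cycle (take a shortest closed sub-walk).

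With these two ingredients in place, the proof runs as follows. Fix an arbitrary vertex $C_j$ of $Q/\pi$ and choose some representative $v\in C_j$. Since $Q$ has no member, there exists a cycle $D$ in $Q$ together with a path from $v$ to $D$. Projecting, one obtains a path in $Q/\pi$ from $C_j$ to the projection of $D$, and the latter contains an oriented cycle of $Q/\pi$, so $C_j$ has a path to a cycle in the quotient. The symmetric argument, using a cycle $D'$ with a path from $D'$ to $v$, shows that $C_j$ is reachable from a cycle in $Q/\pi$. Hence $C_j\in C(Q/\pi)$, and since $C_j$ was arbitrary we conclude $C(Q/\pi)=Q/\pi$, i.e.\ $M(Q/\pi)=\emptyset$.

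The only mild subtlety I anticipate is making explicit that a closed walk in a finite quiver always contains an oriented cycle (and hence the projection of a cycle of $Q$ remains a cycle-containing configuration, even though its length might collapse); this is the reason the argument works despite the fact that distinct vertices of $Q$ on the same cycle may be merged by $\pi$. Everything else amounts to the functoriality of the projection $Q\to Q/\pi$ on paths, which is guaranteed by the equitability of $\pi$.
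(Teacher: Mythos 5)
Your proof is correct and follows essentially the same route as the paper: characterise the heart by the existence of paths to and from a cycle, project paths and cycles through the quotient map (using that $b_{ij}\geq 1$ whenever an arrow joins the classes), and conclude every vertex of $Q/\pi$ lies in its heart. You are in fact slightly more careful than the paper on the point that the image of a cycle is a priori only a closed walk and hence \emph{contains} a cycle, but this is a refinement of the same argument rather than a different one.
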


\begin{obs}
Given an automorphism of quivers, the set of orbits form an equitable partition.
\end{obs}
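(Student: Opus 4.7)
The plan is to exploit the defining property of a quiver automorphism, namely that it carries arrows to arrows compatibly with source and target, so that every power of it restricts to a bijection on the arrows starting at any given vertex. Let $\sigma$ be the given automorphism, consisting of bijections $\sigma_0 \colon Q_0 \to Q_0$ and $\sigma_1 \colon Q_1 \to Q_1$ with $s(\sigma_1(\alpha)) = \sigma_0(s(\alpha))$ and $t(\sigma_1(\alpha)) = \sigma_0(t(\alpha))$ for every arrow $\alpha$. I would form the orbit partition $\pi = \{C_1, \ldots, C_r\}$ of $Q_0$ under the cyclic group $\langle \sigma_0 \rangle$.

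I would then fix indices $i,j$ and two vertices $v, v' \in C_i$, and pick $k \in \mathbb{Z}$ with $\sigma_0^{k}(v) = v'$. Because $\sigma^{k}$ is again a quiver automorphism, it induces a bijection between the arrows with source $v$ and the arrows with source $v'$, sending $\alpha \colon v \to w$ to $\sigma_1^{k}(\alpha) \colon v' \to \sigma_0^{k}(w)$. Since orbits are preserved by every power of $\sigma_0$, one has $w \in C_j$ if and only if $\sigma_0^{k}(w) \in C_j$, so the above bijection restricts to a bijection between the arrows from $v$ ending in $C_j$ and the arrows from $v'$ ending in $C_j$ (and, by passing to targets, to a bijection between the sets of immediate successor vertices lying in $C_j$). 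Hence the count $b_{ij}$ does not depend on the chosen representative of $C_i$, which is exactly the definition of equitability.

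I do not foresee a serious obstacle; the only point requiring a word of care is specifying which group one uses to form orbits. Taking the cyclic subgroup generated by $\sigma$ suffices for the statement as written, and the very same argument goes through verbatim for the orbit partition under any subgroup of the full automorphism group, since within a single orbit any two vertices are related by some group element.
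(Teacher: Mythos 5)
Your argument is correct: the paper states this remark without any proof, and the equivariance computation you give (a power of the automorphism carrying $v$ to $v'$ induces a bijection between the arrows out of $v$ and out of $v'$ that preserves the orbit of the target, hence preserves the counts $b_{ij}$) is exactly the standard justification one would supply. Your closing caveat about which group to use is well taken but harmless here, since the cyclic group generated by the given automorphism suffices for the statement as written.
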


\begin{defi}
Given a partition  $\pi =\{C_1, \ldots, C_r\}$ of the vertex set of a quiver  $Q$ we define the  \textbf{characteristic matrix} $P$ as the matrix with $|Q_0|$  rows and  $r$ columns  such that the $i^{th}$ column of
 $P$ has a $1$ in the $j^{th}$ row if  $v_j \in C_i$ and  $0$ if  $v_j \notin C_i$.
\end{defi}

\begin{obs}\label{P^tP}
We state without proof the following fact, as usual the reader can find a proof in the book \cite{kn: GoR}.
\begin{itemize}
  \item $(P^{t}P)_{ii} = |C_i|$ for all  $i$,
  \item $(P^{t}P)_{ij} = 0$ if $i \neq j$,
  \item As a consequence of the former items we have that $P^{t}P$  is an invertible matrix.
\end{itemize}

\end{obs}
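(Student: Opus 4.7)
The plan is to compute $(P^{t}P)_{ij}$ directly from the definition of the characteristic matrix $P$ and then exploit the fact that $\pi$ is a partition of $Q_0$.

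First I would unpack the definition. By construction, the $(j,i)$-entry of $P$ is the indicator $P_{ji} = 1$ if $v_j\in C_i$ and $0$ otherwise, so $(P^{t})_{ij} = P_{ji}$ is the same indicator. Computing the matrix product,
\[
(P^{t}P)_{ij} \;=\; \sum_{k=1}^{|Q_0|} (P^{t})_{ik}\,P_{kj} \;=\; \sum_{k=1}^{|Q_0|} \mathbf{1}[v_k\in C_i]\,\mathbf{1}[v_k\in C_j] \;=\; |C_i\cap C_j|.
\]
This single identity already contains all the information I need.

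Next I would invoke the partition hypothesis: since $\pi=\{C_1,\ldots,C_r\}$ is a partition of $Q_0$, the classes are pairwise disjoint and nonempty. Disjointness gives $|C_i\cap C_j| = 0$ for $i\neq j$, establishing the second bullet, and the case $i=j$ gives $(P^{t}P)_{ii} = |C_i|$, establishing the first. Thus $P^{t}P$ is the diagonal matrix $\mathrm{diag}(|C_1|,\ldots,|C_r|)$.

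Finally, because each class $C_i$ is nonempty, every diagonal entry $|C_i|$ is a positive integer, so $P^{t}P$ has nonzero determinant $\prod_{i=1}^r |C_i|$ and is invertible; its inverse is the diagonal matrix with entries $1/|C_i|$. There is no serious obstacle here — the whole argument is a one-line indicator-function calculation followed by the definition of a partition — so I would not expect any step to present difficulty; the content of the remark is essentially a bookkeeping fact about $0/1$ matrices whose columns have disjoint supports.
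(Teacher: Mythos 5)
Your computation is correct and complete: the identity $(P^{t}P)_{ij}=|C_i\cap C_j|$ together with the disjointness and nonemptiness of the classes of a partition yields exactly the three bullet points, with $P^{t}P=\mathrm{diag}(|C_1|,\ldots,|C_r|)$ invertible. The paper itself states this remark without proof, deferring to \cite{kn: GoR}, so there is nothing to compare against; your one-line indicator-function argument is the standard verification and fills the omission correctly.
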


 We now state some facts, adapting to our context,  some of the  results which appear in the book \cite{kn: GoR}. Since the techniques which we use are similar we do not give the proofs here, the interested reader may see  complete proofs in \cite{kn:mata}.

\begin{lema}\label{particion equitativa}
Let $Q$ be a quiver without member and $\pi$ an equitable partition of  $Q$ with characteristic matrix  $P$. If $M_Q$ and $M_{Q/\pi}$ are the adjacency matrices of  $Q$ and  $Q/\pi$ respectively, then  $M_QP =PM_{Q/\pi}$ and  $M_{Q/\pi} = (P^{t}P)^{-1}P^{t}M_QP$.\\
\end{lema}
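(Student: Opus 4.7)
The plan is to verify the two claimed identities directly from the definitions, using the equitable partition condition to match entries of the relevant matrix products.

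First I would prove the intertwining relation $M_Q P = P M_{Q/\pi}$ by computing a generic entry. Label the rows of $M_Q$ and $P$ by the vertices $v \in Q_0$ and the columns of $P$ and $M_{Q/\pi}$ by the classes $C_1, \dots, C_r$. Pick $v \in C_i$ and consider the $(v, C_j)$-entry. On the left-hand side,
\[
(M_Q P)_{v, C_j} = \sum_{w \in Q_0} (M_Q)_{v,w} P_{w, C_j} = \sum_{w \in C_j} (M_Q)_{v,w},
\]
which is exactly the number of arrows from $v$ to vertices of $C_j$. By the definition of an equitable partition, this number depends only on $i$ and $j$ and equals $b_{ij}$. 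On the right-hand side, $P_{v, C_k} = \delta_{k,i}$, so
\[
(P M_{Q/\pi})_{v, C_j} = \sum_{k} P_{v, C_k} (M_{Q/\pi})_{C_k, C_j} = (M_{Q/\pi})_{C_i, C_j} = b_{ij},
\]
which matches. Hence $M_Q P = P M_{Q/\pi}$.

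Next I would derive the second identity. By Remark \ref{P^tP}, the matrix $P^t P$ is the diagonal matrix with entries $|C_i|$, in particular invertible. Multiplying the intertwining relation on the left by $(P^t P)^{-1} P^t$ gives
\[
(P^t P)^{-1} P^t M_Q P = (P^t P)^{-1} P^t P \, M_{Q/\pi} = M_{Q/\pi},
\]
which is the second claimed formula. This completes the proof.

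I do not expect a serious obstacle here: the first identity is a bookkeeping exercise using the defining property of equitable partitions, and the second is a purely formal consequence once one knows that $P^t P$ is invertible, which is already recorded. The hypothesis that $Q$ has no member plays no role in the argument itself; it is only relevant because this lemma is intended for later application to such quivers.
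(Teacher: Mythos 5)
Your proof is correct. The paper itself does not prove this lemma --- it explicitly defers to \cite{kn: GoR} (Chapter 9.3) and to the thesis \cite{kn:mata} --- but your entry-by-entry verification of $M_QP=PM_{Q/\pi}$ followed by left-multiplication with $(P^tP)^{-1}P^t$ is exactly the standard argument those references give, so there is nothing genuinely different to compare. One small point worth flagging: the paper's definition of an equitable partition literally counts the \emph{vertices} of $C_j$ that are immediate successors of $v$, whereas your entry $\sum_{w\in C_j}(M_Q)_{v,w}$ counts \emph{arrows} with multiplicity; for quivers with multiple arrows these differ, and the lemma requires the arrow-counting reading of $b_{ij}$ (which is the one forced by the paper's definition of the quotient quiver, where $b_{ij}$ is the number of arrows from $C_i$ to $C_j$). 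You are also right that the ``no member'' hypothesis plays no role in the identity itself.
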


\begin{lema}
Let $Q$ be a quiver with adjacency matrix  $M_Q$ and let $\pi$ be a partition of $Q_0$ with characteristic matrix $P$. Then $\pi$ is equitable if and only if the column space is invariant by $M_Q$.\\
\end{lema}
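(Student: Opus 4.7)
The plan is to identify both sides of the equivalence in terms of how $M_Q$ acts on the indicator vectors of the cells, and then to read off the equitability condition directly from a one-line computation. First I would note that the $r$ columns of $P$ are exactly the indicator vectors $\mathbf{1}_{C_1},\ldots,\mathbf{1}_{C_r}$ of the cells, and that these are linearly independent (which is already implicit in Remark \ref{P^tP}, since $P^{t}P$ is invertible). Therefore the column space of $P$ coincides with the subspace of $\mathbb{Z}^{|Q_0|}$ consisting of vectors that are constant on each cell, and it is $M_Q$-invariant if and only if $M_Q\mathbf{1}_{C_i}$ is constant on each cell $C_j$ for every $i$.

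Next I would compute, for a vertex $v$,
$$(M_Q\mathbf{1}_{C_i})_v \;=\; \sum_{w\in C_i}(M_Q)_{vw},$$
and interpret this (with the convention for $M_Q$ fixed by Lemma \ref{particion equitativa}) as the number of arrows from the vertex $v$ into the cell $C_i$. The $M_Q$-invariance of the column space then says exactly that this number depends only on the cell $C_j$ containing $v$, which is the defining property of an equitable partition with constants $b_{ji}$.

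To finish, both directions reduce to the same computation. For ($\Rightarrow$), if $\pi$ is equitable then the explicit formula $M_Q\mathbf{1}_{C_i}=\sum_j b_{ji}\mathbf{1}_{C_j}$ shows directly that $M_Q\mathbf{1}_{C_i}$ lies in the column space of $P$, so this column space is invariant. For ($\Leftarrow$), if the column space is $M_Q$-invariant then $M_Q\mathbf{1}_{C_i}$ can be written as $\sum_j\lambda_{ji}\mathbf{1}_{C_j}$, and comparing coordinates vertex by vertex forces the scalars $\lambda_{ji}$ to be precisely the constants required by the equitability condition.

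The only step that needs any real care is pinning down the convention for the adjacency matrix consistently with the preceding Lemma \ref{particion equitativa}, so that the expression $M_Q\mathbf{1}_{C_i}$ genuinely encodes the out-degrees from each vertex into $C_i$ (matching the \emph{successor}-based definition of equitability used in the paper) rather than the opposite direction. Once that is settled, the proof is a one-line verification, which is why I would simply indicate the argument and refer the reader to the analogous treatment in \cite{kn: GoR} and to \cite{kn:mata} for full details, in line with the authorial style used just above for the preceding two lemmas.
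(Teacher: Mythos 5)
Your argument is correct and is exactly the standard one: the column space of $P$ is the space of vectors constant on cells, and $M_Q$-invariance amounts to $(M_Q\mathbf{1}_{C_i})_v=\sum_{w\in C_i}(M_Q)_{vw}$ depending only on the cell of $v$, which is the equitability condition. The paper itself gives no proof of this lemma, deferring to \cite{kn: GoR} and \cite{kn:mata}, and the deferred proof is precisely this computation, so your write-up matches the intended argument (including the correct care about the source-row convention needed for consistency with Lemma \ref{particion equitativa}).
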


\begin{obs}
Another time we state a fact whose proof can be found in \cite{kn: GoR}.\\
If $AP = PB$ then for  each polynomial $f$, we have that $f(A)P = Pf(B)$. Therefore if $f(A)=0$ then  $f(B)=0$ and we conclude that the minimal polynomial of $B$ divides the minimal polynomial of $A$.
\end{obs}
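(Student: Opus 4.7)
The plan is to first establish the intertwining relation $f(A)P = Pf(B)$ by induction on the degree of $f$, then invoke the left-invertibility of the characteristic matrix $P$ (guaranteed by the previous Remark \ref{P^tP}) to transfer vanishing statements from $Pf(B)$ to $f(B)$ itself, and finally read off the divisibility of minimal polynomials.

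More concretely, I would first prove by induction on $k$ that $A^k P = P B^k$ for every $k \geq 0$. The base cases $k=0,1$ are trivial (the second is the hypothesis), and the inductive step is the computation $A^{k+1}P = A(A^kP) = A(PB^k) = (AP)B^k = (PB)B^k = PB^{k+1}$. Writing $f(x)=\sum_{k=0}^{d}c_kx^k$ and taking $\mathbb{Z}$-linear combinations of these identities, one immediately obtains $f(A)P = Pf(B)$. In particular, if $f(A)=0$ then $Pf(B)=0$.

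To pass from $Pf(B)=0$ to $f(B)=0$, I would use the fact, recorded in Remark \ref{P^tP}, that $P^{t}P$ is invertible; consequently $(P^{t}P)^{-1}P^{t}$ is a left inverse of the characteristic matrix $P$. Multiplying the equation $Pf(B)=0$ on the left by this left inverse yields $f(B)=0$. Applying this with $f$ equal to the minimal polynomial $m_A$ of $A$, we conclude that $m_A$ annihilates $B$, and hence the minimal polynomial $m_B$ of $B$ divides $m_A$, which is exactly the claim.

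The only step which is more than routine bookkeeping is the last one: one needs to observe that although the statement is phrased generally for matrices $A$, $B$, $P$ satisfying $AP=PB$, the conclusion $f(A)=0\Rightarrow f(B)=0$ really requires some injectivity of $P$ acting on the right side. In our setting this is not an obstacle because the columns of the characteristic matrix of a partition are indicator vectors of disjoint classes and are therefore linearly independent, which is precisely the content of Remark \ref{P^tP}. Without this observation the transfer would fail, so identifying it as the essential ingredient is the only non-mechanical part of the argument.
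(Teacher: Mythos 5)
Your proof is correct, and it is the standard argument (the paper itself gives no proof here, deferring to \cite{kn: GoR}): induction gives $A^kP=PB^k$, linearity gives $f(A)P=Pf(B)$, and left-invertibility of $P$ turns $Pf(B)=0$ into $f(B)=0$, whence $m_B\mid m_A$. Your closing observation is the one genuinely valuable point: as literally stated for arbitrary matrices with $AP=PB$, the implication $f(A)=0\Rightarrow f(B)=0$ is false (take $P=0$), so the argument really does hinge on $P$ having linearly independent columns, which in this setting is exactly what Remark \ref{P^tP} records via the invertibility of $P^{t}P$. Correctly isolating that hypothesis, rather than treating the transfer as formal, is precisely what a careful proof of this remark requires.
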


\begin{teo} \label{polinomio caracteristico pi}
If $\pi$ is an equitable partition of the quiver  $Q$, and $M_Q$ is the adjacency matrix of  $Q$ and $M_{Q/\pi}$ is the adjacency matrix of $Q/\pi$. Then the characteristic polynomial of  $ M_{Q/\pi}$ divides the characteristic
polynomial of  $M_Q$.
\end{teo}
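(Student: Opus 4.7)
The plan is to use the intertwining relation $M_Q P = P M_{Q/\pi}$ from Lemma \ref{particion equitativa} to realise $M_{Q/\pi}$ as the matrix of the restriction of $M_Q$ to an invariant subspace, and then conclude by a standard block-triangular argument on characteristic polynomials.

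First I would observe that the columns of $P$ are linearly independent: by Remark \ref{P^tP}, $P^{t}P$ is an invertible diagonal matrix, which forces $P$ to have full column rank $r$. Hence $U := \operatorname{Col}(P) \subseteq \mathbb{C}^{n}$ (with $n=|Q_0|$) is a subspace of dimension $r$. The identity $M_Q P = P M_{Q/\pi}$ then says precisely that $M_Q(U) \subseteq U$, so $U$ is an $M_Q$-invariant subspace, and reading the same identity column by column shows that the matrix of the restriction $M_Q|_{U}$ in the basis of $U$ formed by the columns of $P$ is exactly $M_{Q/\pi}$.

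Next I would extend the columns of $P$ to a basis of $\mathbb{C}^{n}$ by adjoining $n-r$ further vectors. In this enlarged basis, the invariance of $U$ forces the matrix of $M_Q$ to take the block upper triangular form
$$\begin{pmatrix} M_{Q/\pi} & * \\ 0 & C \end{pmatrix},$$
where $C$ is the matrix of the induced action on the quotient space $\mathbb{C}^{n}/U$. Taking $\det(xI - \cdot)$ and using the block triangular structure yields
$$p_{M_Q}(x) \;=\; p_{M_{Q/\pi}}(x)\, p_{C}(x),$$
which is the divisibility asserted in the theorem.

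The statement is light on obstacles: the only point that requires care is verifying that $\dim U = r$ (so that the restriction really is represented by $M_{Q/\pi}$ rather than by some quotient of it), and this is handled cleanly by Remark \ref{P^tP}. The rest is standard linear algebra bookkeeping, essentially the same trick that is alluded to in the remark preceding the theorem for the minimal polynomial, adapted to the characteristic polynomial via block triangularisation rather than polynomial evaluation.
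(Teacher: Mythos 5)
Your argument is correct, and it supplies the proof that the paper actually omits (the text defers to \cite{kn: GoR} and \cite{kn:mata} rather than arguing the theorem): it is exactly the standard Godsil--Royle argument. Full column rank of $P$ from the invertibility of $P^{t}P$ (Remark \ref{P^tP}), invariance of the column space of $P$ under $M_Q$ via $M_QP=PM_{Q/\pi}$, identification of the restricted map with $M_{Q/\pi}$ in the basis of columns of $P$, and the block upper triangular factorisation of the characteristic polynomial are all sound. One small point worth flagging: Lemma \ref{particion equitativa} is stated in the paper under the hypothesis that $Q$ has no member, while the theorem carries no such hypothesis; the intertwining relation $M_QP=PM_{Q/\pi}$ is a purely combinatorial consequence of equitability and holds for any quiver, so your appeal to it is harmless, but to prove the theorem exactly as stated you should note that the lemma's ``no member'' assumption is not needed for that identity.
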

We use the results above in order to obtain the following relation between the $\phi$-dimension of an algebra and the $\phi$-dimension of a quotient by an equitable partition.

\begin{prop}\label{cota equitativa}
Let $A= \frac{\mathbb{K} Q}{J^2}$ be a radical square zero algebra without member. If $\pi$ is an equitable partition of $Q$, then  $$\phi \dim\left(\frac{\mathbb{K}(Q/\pi)}{J^{2}}\right) \leq\ min\{m, \phi \dim(A)\},$$ where $m$ is the number of vertices of  $Q/\pi$.\\

\begin{proof} The adjacency matrix of  $Q$ is similar to a matrix of the form:

$$\left(
                                                                    \begin{array}{cc}
                                                                      B & C \\
                                                                      0 & D \\
                                                                    \end{array}
                                                                  \right)$$

\noindent where $B$ is the adjacency matrix of  $Q/\pi$.\\

\noindent Let $\{v_1, \ldots,v_k\}$  be a set of  vectors of $\mathbb{C}^{m}$ such that  $B(v_i) = v_{i+1}$ for  $ 0 \leq i<k$ and  $B(v_{k}) = 0$. Consider next the vectors of $\mathbb{C}^{n}$ $\{w_1, \ldots, w_k\}$ whose coordinates are $w_i = (v_i, 0)$.  From this we deduce that there is a Jordan block of  $A$ associated with the eigenvalue $0$  with size bigger than  $k$, therefore using  the Theorem  \ref{forma de jordan 2}, we see that $\phi \dim\left(\frac{\mathbb{K} (Q/\pi)}{J^{2}}\right) \leq \phi \dim(A)$.
\end{proof}

\end{prop}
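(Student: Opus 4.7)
The plan is to establish the two bounds $\fidim(B) \le m$ and $\fidim(B) \le \fidim(A)$ separately, where $B = \frac{\mathbb{K}(Q/\pi)}{J^2}$. The first bound is essentially immediate: by the preceding proposition, $Q/\pi$ inherits the property of having no member from $Q$, so $B$ is a radical square zero algebra on $m$ vertices whose quiver has no member, and then Corollary \ref{toto} applied to $B$ yields $\fidim(B) \le m$.

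For the more substantial bound $\fidim(B) \le \fidim(A)$, the strategy is to compare Jordan forms at the eigenvalue $0$. Since neither $Q$ nor $Q/\pi$ has a member, Theorem \ref{forma de jordan 2} gives $\fidim(A) = m_A + 1$ and $\fidim(B) = m_B + 1$, where $m_A$ and $m_B$ denote the maximal sizes of Jordan blocks associated to $0$ in $M_Q$ and $M_{Q/\pi}$, respectively. So it suffices to prove $m_B \le m_A$.

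To that end, I would exploit the relation $M_Q P = P M_{Q/\pi}$ from Lemma \ref{particion equitativa}. Because $\pi$ is a partition, the columns of $P$ are linearly independent and span an $m$-dimensional $M_Q$-invariant subspace of $\mathbb{C}^n$. Extending those columns to a basis of $\mathbb{C}^n$, the matrix of $M_Q$ acquires the block upper triangular form
$$\left(\begin{array}{cc} M_{Q/\pi} & C \\ 0 & D \end{array}\right).$$
Given a Jordan chain $v_1, \ldots, v_{m_B}$ of $M_{Q/\pi}$ at the eigenvalue $0$, the vectors $w_i = (v_i, 0) \in \mathbb{C}^n$ then form a Jordan chain of the same length for this block triangular matrix, and hence for $M_Q$ by similarity. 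This produces $m_A \ge m_B$ and completes the argument.

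I do not expect a genuine obstacle here. Once the Jordan-block interpretation provided by Theorem \ref{forma de jordan 2} is available, the problem reduces to a standard piece of linear algebra concerning an invariant subspace and the induced block triangular form. The only small piece of care is to ensure the hypothesis ``no member'' transfers from $Q$ to $Q/\pi$ so that Theorem \ref{forma de jordan 2} is legitimately applicable to $B$, and to match the Jordan chain lifting correctly against the basis adapted to the column space of $P$.
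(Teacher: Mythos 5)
Your proposal is correct and takes essentially the same route as the paper: both reduce to comparing the largest Jordan blocks at the eigenvalue $0$ via Theorem \ref{forma de jordan 2}, put $M_Q$ in block upper triangular form with $M_{Q/\pi}$ in the top-left corner (you justify this explicitly from $M_QP=PM_{Q/\pi}$, which the paper merely asserts), and lift a Jordan chain of $M_{Q/\pi}$ to one of the same length for $M_Q$. Your separate treatment of the bound by $m$ via Corollary \ref{toto} is a harmless addition the paper leaves implicit.
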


\begin{obs}
In the same conditions as the Proposition  \ref{cota equitativa} we can prove that each invariant subspace associates to a Jordan block of  $B$ is a subspace of a unique invariant subspace associated to a unique Jordan block of $A$.

\end{obs}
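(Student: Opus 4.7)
\noindent The plan is to use the block-triangular form of $M_Q$ obtained in the proof of Proposition~\ref{cota equitativa}, namely that $M_Q$ is similar to the matrix with blocks $B$, $C$ on the top row and $0$, $D$ on the bottom row, where $B = M_{Q/\pi}$. The vanishing of the lower-left block gives a linear embedding $\iota\colon \mathbb{C}^m \hookrightarrow \mathbb{C}^n$ defined by $\iota(v) = (v,0)$, satisfying the intertwining identity $M_Q\,\iota(v) = \iota(Bv)$ for all $v \in \mathbb{C}^m$. Consequently $\iota$ carries $B$-invariant subspaces to $M_Q$-invariant subspaces, and under the identification given by $\iota$ the restriction of $M_Q$ to such a subspace coincides with the restriction of $B$.

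\noindent Let $V \subset \mathbb{C}^m$ be the invariant subspace of a Jordan block of $B$ of size $k$ at eigenvalue $\lambda$, with associated Jordan chain $v_1, \ldots, v_k$. The intertwining identity shows that $\iota(v_1), \ldots, \iota(v_k)$ is a Jordan chain of $M_Q$ of length $k$ at the same eigenvalue $\lambda$, and that $\iota(V)$ is an $M_Q$-invariant subspace on which $M_Q$ acts as a single Jordan block of size $k$.

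\noindent To exhibit the enclosing Jordan block of $M_Q$, I would extend the chain $\iota(v_1), \ldots, \iota(v_k)$ to a Jordan basis of the generalised eigenspace of $M_Q$ at $\lambda$, prolonging it to a maximal Jordan chain $u_1, \ldots, u_{k'}$ with $k' \ge k$ and $u_i = \iota(v_i)$ for $1 \le i \le k$. The remaining Jordan blocks of $M_Q$ in this generalised eigenspace are then built on a complementary invariant subspace in the standard way. In the resulting Jordan decomposition of $M_Q$, the subspace $\iota(V)$ sits inside the invariant subspace spanned by $u_1, \ldots, u_{k'}$, which is the invariant subspace of one Jordan block; uniqueness is automatic because distinct Jordan blocks of a fixed decomposition span subspaces intersecting only in zero.

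\noindent The main obstacle is the prolongation step: one needs to guarantee that an existing Jordan chain can be extended to a Jordan chain of maximal length inside the generalised eigenspace of $M_Q$ at $\lambda$, so that it appears as an initial segment of a single block in some Jordan decomposition. This is standard linear algebra, carried out via the height stratification by the kernels $\ker(M_Q - \lambda I)^t$ and choosing cyclic vectors at each height with $\iota(v_k)$ as one of them; but it is the only ingredient beyond the direct consequence of the intertwining identity $M_Q\,\iota = \iota\,B$.
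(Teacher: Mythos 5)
The paper states this remark without proof, so there is no argument of the authors to compare against; judged on its own merits, your proposal has a genuine gap at precisely the step you flag as ``the main obstacle'' and then dismiss as standard linear algebra. The intertwining relation $M_QP=PM_{Q/\pi}$ of Lemma \ref{particion equitativa} (your $M_Q\,\iota=\iota\,B$) does show that the image of a Jordan chain of $B$ is a Jordan chain of $M_Q$ of the same length and eigenvalue. But it is \emph{false} in general that a Jordan chain of the restriction of an operator to an invariant subspace can be prolonged to a maximal chain, or even that its span is contained in some Jordan block subspace of some Jordan decomposition of the ambient operator. Concretely, let $N$ act on $\mathbb{C}^4$ with Jordan type $(3,1)$ at the eigenvalue $0$: a chain $e_1\mapsto e_2\mapsto e_3\mapsto 0$ together with an eigenvector $f$. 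The subspace $U=\langle e_2+f,\ e_3\rangle$ is $N$-invariant and $N|_U$ is a single Jordan block of size $2$; yet $e_2+f\notin \mathrm{Im}(N)$, so the chain admits no prolongation, and every size-$3$ block subspace of $N$ contains $e_2$ and $e_3$ but not $f$, hence cannot contain $e_2+f$, while the size-$1$ block cannot contain a $2$-dimensional subspace. So the height-stratification argument you invoke does not go through for an arbitrary invariant subspace; a correct proof must exploit what is special about $\mathrm{Im}(P)$ for an equitable partition (the columns of $P$ are $0$--$1$ vectors with disjoint supports summing to the all-ones vector), and your argument uses nothing beyond the bare relation $M_QP=PB$, which is insufficient.

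Two further points. First, the remark, as it is used in Theorem \ref{phi equitativa}, needs the assignment (block of $B$) $\mapsto$ (block of $M_Q$) to come from a \emph{single} Jordan decomposition of $M_Q$ accommodating all block subspaces of $B$ at once, in distinct blocks; your construction treats one chain at a time and does not address why the completions for different blocks can be chosen compatibly. Second, there is an indexing slip: with the paper's convention $B(v_i)=v_{i+1}$, $B(v_k)=0$, a prolongation must \emph{prepend} vectors (solve $M_Qu=\iota(v_1)$), not append them, since $M_Q\iota(v_k)=0$ already; requiring $u_i=\iota(v_i)$ for $i\le k$ inside a longer chain $u_1,\dots,u_{k'}$ is inconsistent.
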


\begin{teo}\label{phi equitativa}

Let  $A = \frac{\mathbb{K} Q}{J^2}$ be an algebra without member, if $\pi$ is an equitable partition of  $Q$ with $m$ elements, we have that  $m-\phi \dim\left(\frac{\mathbb{K}(Q/\pi)}{J^2}\right) \leq n - \phi \dim(A)$.

\begin{proof} It follows from Theorem \ref{forma de jordan 2} that  $\phi \dim\left(\frac{\mathbb{K}(Q/\pi)}{J^2}\right) = k+1$ where $k$ is the size of the largest Jordan block associates with the eigenvalue  $0$ of the matrix $B$.
Since each of the others Jordan blocks of $B$, (when we discard one of size $k$) is of smaller or equal size to a Jordan block of $A$ discarding the block associates to the respective block of $B$ which we just eliminate, we have $0 \leq m-\phi \dim\left(\frac{\mathbb{K}(Q/\pi)}{J^2}\right) \leq n-\phi \dim (A)$.

\end{proof}
\end{teo}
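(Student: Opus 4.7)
My plan is to reduce the claim to a Jordan-theoretic inequality for adjacency matrices. The preceding Proposition shows that $Q/\pi$ also has no member, so Theorem \ref{forma de jordan 2} applies to both $A$ and $\mathbb{K}(Q/\pi)/J^{2}$: writing $\fidim(A)=k_{A}+1$ and $\fidim(\mathbb{K}(Q/\pi)/J^{2})=k_{B}+1$ with $k_{A}$ (resp.\ $k_{B}$) the size of the largest Jordan block at eigenvalue $0$ of $M_{Q}$ (resp.\ of $B=M_{Q/\pi}$), the statement is equivalent to
\[
k_{A}-k_{B}\le n-m.
\]

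Next I would invoke Lemma \ref{particion equitativa} to put $M_{Q}$, after grouping each $\pi$-class into consecutive vertices, in the block upper triangular form
\[
M_{Q}\sim\left(\begin{array}{cc} B & C \\ 0 & D \end{array}\right),
\]
where $D$ has size $(n-m)\times(n-m)$. Theorem \ref{polinomio caracteristico pi} then gives $p_{M_{Q}}=p_{B}\,p_{D}$, so denoting by $\mu_{\bullet}(0)$ the algebraic multiplicity of $0$,
\[
\mu_{M_{Q}}(0)-\mu_{B}(0)=\mu_{D}(0)\le n-m.
\]
The Remark after Proposition \ref{cota equitativa} furnishes an injection $\sigma$ from the Jordan blocks of $B$ at $0$ into those of $M_{Q}$ at $0$, sending each $B$-block to an $M_{Q}$-block of at least the same size. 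Enumerating block sizes decreasingly as $l_{1}\ge l_{2}\ge\cdots$ for $B$ and $k_{1}\ge k_{2}\ge\cdots$ for $M_{Q}$, partition dominance on invariant subspaces (coming from $M_{Q}^{k_{A}}$ vanishing on its generalised $0$-eigenspace and hence on the lifted image of that of $B$) ensures $l_{1}=k_{B}\le k_{A}=k_{1}$, and a swap argument lets me adjust $\sigma$ so that $\sigma(1)=1$.

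Once $\sigma(1)=1$ is secured, each summand $k_{\sigma(i)}-l_{i}$ is non-negative and
\[
k_{A}-k_{B}=k_{1}-l_{1}\le\sum_{i}\bigl(k_{\sigma(i)}-l_{i}\bigr)\le\mu_{M_{Q}}(0)-\mu_{B}(0)\le n-m,
\]
which is the required bound. The main technical point is justifying the swap that realises $\sigma(1)=1$: if the Remark's injection has $\sigma(j)=1$ with $j\ne 1$, interchanging the $\sigma$-values at positions $1$ and $j$ stays admissible because $l_{j}\le l_{1}\le k_{\sigma(1)}$, the last inequality coming from the original pair $(1,\sigma(1))$. A cleaner but longer alternative would be to bypass pairings entirely and prove the sharper statement $k_{A}\le k_{B}+k_{D}$ (with $k_{D}\le n-m$ the largest Jordan block of $D$ at $0$) by a direct matrix computation, using that the off-diagonal block $R_{j}=\sum_{i=0}^{j-1}B^{i}CD^{j-1-i}$ of $M_{Q}^{j}$ factors as $B^{j-k_{D}}R_{k_{D}}$ on the generalised $0$-eigenspace of $D$.
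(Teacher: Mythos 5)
Your proof is correct and follows essentially the same route as the paper: both reduce the statement via Theorem \ref{forma de jordan 2} to a comparison of Jordan block sizes at the eigenvalue $0$, and both rely on the block-by-block injection furnished by the Remark following Proposition \ref{cota equitativa}. The only difference is that you make explicit (through the swap argument and the factorisation $p_{M_Q}=p_{B}\,p_{D}$) the step the paper glosses over, namely that the discarded largest $0$-block of $B$ may be taken to correspond to a largest $0$-block of $M_Q$.
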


\begin{coro}

Let $A = \frac{\mathbb{K} Q}{J^2}$ be an algebra without members, then  $\phi \dim(A)$ is  maximal if and only if
 $\phi \dim\left(\frac {\mathbb{K}(Q/\pi)}{J^{2}}\right)$ is maximal for each equitable partition $\pi$ of $Q$.\\

\begin{proof} We first prove that the condition is necessary.\\
Consider an equitable partition $\pi$, using the Theorem \ref{phi equitativa} we get that  $0 \leq m-\phi \dim\left(\frac{\mathbb{K}(Q/\pi)}{J^2}\right) \leq n - \phi \dim(A) = 0$, therefore  $\phi\left(\frac{\mathbb{K}(Q/\pi)}{J^2}\right) = 0$ and since $Q/\pi$ has $m$ vertices it follows that  $\fidim \left(\frac{ \mathbb{K}(Q/\pi)}{J^2}\right) = m$ is maximal.\\

\noindent The sufficient condition is clear, considering the trivial partition.
\end{proof}

\end{coro}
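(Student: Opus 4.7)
The plan is to derive both implications as almost immediate consequences of Theorem \ref{phi equitativa} and of the trivial partition, using only book-keeping of the inequalities established earlier in the paper.

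For the necessity direction, I assume $\phi\dim(A) = n$, where $n = |Q_0|$, i.e.\ $A$ has maximal $\phi$-dimension. Let $\pi$ be any equitable partition of $Q$ with $m$ blocks. The proposition preceding Theorem \ref{phi equitativa} tells me that, since $Q$ has no member, neither does $Q/\pi$, so the notion of ``maximal $\phi$-dimension'' for $\mathbb{K}(Q/\pi)/J^{2}$ is exactly ``$\phi\dim = m$''. Plugging $\phi\dim(A)=n$ into the inequality of Theorem \ref{phi equitativa} yields
\[
0 \;\leq\; m - \phi\dim\!\left(\tfrac{\mathbb{K}(Q/\pi)}{J^{2}}\right) \;\leq\; n - \phi\dim(A) \;=\; 0,
\]
forcing $\phi\dim\!\left(\mathbb{K}(Q/\pi)/J^{2}\right)=m$, i.e.\ maximal.

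For the sufficiency direction, I use that the trivial partition $\pi_{0}=\{\{v\}\}_{v\in Q_{0}}$ is always equitable, and that the associated quotient quiver $Q/\pi_{0}$ is canonically isomorphic to $Q$ itself, with $m = n$. Thus, by hypothesis,
\[
\phi\dim(A) \;=\; \phi\dim\!\left(\tfrac{\mathbb{K}(Q/\pi_{0})}{J^{2}}\right) \;=\; n,
\]
so $\phi\dim(A)$ is maximal.

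The step I expect to require the most care is simply checking that ``maximal $\phi$-dimension'' has the meaning $\phi\dim = m$ for the quotient, which relies on invoking the preservation of the no-member property under equitable partitions; once that is in hand, both directions are one-line consequences of the inequality in Theorem \ref{phi equitativa} and of the existence of the trivial equitable partition.
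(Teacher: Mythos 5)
Your proof is correct and follows essentially the same route as the paper: both directions come from the inequality $0 \leq m-\phi\dim\left(\frac{\mathbb{K}(Q/\pi)}{J^2}\right) \leq n - \phi\dim(A)$ of Theorem \ref{phi equitativa} for necessity and from the trivial partition for sufficiency. Your explicit check that $Q/\pi$ has no member (so that ``maximal'' indeed means $\phi\dim = m$ for the quotient) is a small point the paper leaves implicit, but the argument is the same.
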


\noindent The following example shows how we can use the results above to see if an algebra does not have maximal $\phi$-dimension.

\begin{ej}
\emph{Let  $A = \frac{\mathbb{K} Q}{J^2}$ where  $Q$ has the following form:}

$$\xymatrix{ & \dot 1 \ar[dl] &  \\
    \dot 2 \ar@/^/[rr] \ar[dr] &  & \dot 3 \ar@/^/[ll] \ar[ul] \\
     & \dot 4 \ar[ur]&  } $$

\emph{Consider the equitable partition $\pi = \{\{1,4\}\{2,3\}\}$.  The algebra  $A' = \frac{\mathbb{K}(Q/\pi)}{J^2}$ has the following quiver:}

$$\xymatrix{ \dot 1 \ar@/^/[r] & \dot 2 \ar@/^/[l] \ar@(lu,ru) \\}$$\\

\emph{\noindent so its adjacency matrix has the following form:}

$$ \left(
  \begin{array}{cc}
    0 & 1 \\
    1 & 1 \\
  \end{array}
\right)$$

\emph{\noindent since this matrix is invertible we have that $\phi \dim(A') = 1$. Since the $\phi \dim (A')$ is not maximal, the  $\phi \dim (A)$ cannot be maximal.}

\end{ej}

\noindent The following example shows that we can have an algebra which does not have  maximal $\phi$-dimension; which has the property that every algebra obtained by a non trivial equitable partition  has maximal $\phi$-dimension.

\begin{ej}
\emph{The radical square zero algebra whose quiver is:}

$$ \xymatrix{ & \dot 1 \ar[rd] \ar@(l,u) & \\
               \dot 4 \ar[ur]  \ar@(l,u) &   & \dot 2 \ar[dl]  \ar@(r,u)\\
              &  \dot 3 \ar[ul]  \ar@(d,r)& }$$\\

\emph{\noindent does not have maximal $\phi$-dimension, nevertheless every square radical zero algebra obteined by an equitable partition has maximal  $\phi$-dimension.}
\end{ej}

\noindent It follows a short argument. It is not hard to show  that the algebra $A$ has $\phi$-dimension $2$.
It is clear that the possible equitable partitions are the trivial one and
\begin{itemize}
  \item $\pi_{1} = \{\{1,3\}, \{2,4\}\}$.
  \item $\pi_{2} = \{\{1,2,3,4\}\}$.
\end{itemize}

\begin{enumerate}
  \item The quiver of $Q/\pi_{1}$ has the form:
  $$\xymatrix { & & \\   \dot a \ar@(l,u) \ar@/^/[rr] & &  \dot b \ar@/^/[ll] \ar@(r,u) \\ & & } $$
In this case the algebra has $\phi$-dimension $2$.

  \item The quiver of  $Q/\pi_{2}$  has the form:
  $$\xymatrix{ & & \\  & \dot c \ar@(l,u) \ar@(r,d) & \\ & & } $$
  In this case the algebra has $\phi$-dimension $1$.

\end{enumerate}

\subsection{Other  $\phi$-dimensions}

\begin{prop}
For all  $0 \leq k \leq n$ there are radical square algebras $A(k)$ with  $|Q_0| = n$ and $\fidim(A) = k$.

\begin{proof}
\begin{itemize} \item By Proposition \ref{autoinyectiva} we have that, $Q = Z_n$ if and only if $\fidim(A) = 0$.
  \item Taking  $A$  an algebra in the family on the Example \ref{phimaximal} we see that  $\fidim(A) = n$.
  \item Finally for  $0<k<n$,  we can take the radical square zero algebras given in the following quiver:

$$Q= \xymatrix{  1 \ar[r]& 2 \ar[r] & 3 \ar[r]& \ldots \ar[r]& k \ar@(d,r)\\
&&&k+1 \ar[ur]&\\
&&& \vdots &\\
&&&n \ar[uuur]}$$
 It is clear  that  $\oplus_{\mathcal{S}_D} S = \oplus_{i =2}^k S_i$ therefore  $\phi(\oplus_{\mathcal{S}_D} S) = k-1$. Finally using the Proposition \ref{explicito}  we conclude that  $\fidim(A) = k$.
\end{itemize}

\end{proof}
\end{prop}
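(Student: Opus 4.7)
My plan is to split the argument into the three cases $k=0$, $k=n$, and $0<k<n$, and exhibit an explicit radical square zero algebra on $n$ vertices in each. For $k=0$, I take $A=\mathbb{K} Z_n/J^2$ where $Z_n$ is the oriented cycle on $n$ vertices; by Proposition~\ref{autoinyectiva} this algebra is selfinjective, so $\fidim(A)=0$. For $k=n$, I invoke Example~\ref{phimaximal}, which provides a radical square zero algebra on $n$ vertices with $\fidim=n$.

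The substantive case is $0<k<n$. I propose the quiver $Q$ with vertex set $\{1,\dots,n\}$, linear path $1\to 2\to\cdots\to k+1$, and an arrow $i\to k+1$ for each $i\in\{k+2,\dots,n\}$. Then $Q$ is connected, $k+1$ is the unique sink and the sources are $\{1,k+2,\dots,n\}$, so $\mathcal{S}_P=\{S_{k+1}\}$ and $\mathcal{S}_D=\{S_2,\dots,S_k\}$; in particular $A$ is not selfinjective, and by Proposition~\ref{explicito} it suffices to compute $\phi(T)$ with $T=\bigoplus_{i=2}^k S_i$. Adjoining the projective $S_{k+1}$ to $T$ does not change $\phi$ (Lemma~\ref{cuentita}), and the enlarged module $T':=T\oplus S_{k+1}$ satisfies $\Omega(T')\in\add T'$; moreover $[S_{k+1}]=0$ in $K_0$, so $\langle\add T'\rangle=\langle\add T\rangle$ still has basis $\{[S_2],\dots,[S_k]\}$. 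On this basis $\overline{\Omega}$ acts by $[S_i]\mapsto[S_{i+1}]$ for $2\le i\le k-1$ and $[S_k]\mapsto 0$, i.e.\ as a single nilpotent Jordan block of size $k-1$; the ranks of its consecutive powers decrease as $k-1,k-2,\dots,1,0,0,\dots$ and first coincide at $l=k-1$. By Proposition~\ref{invariante}(2) this yields $\phi(T)=k-1$, and $\fidim(A)=k$ follows.

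The step I expect to demand the most care is the application of Proposition~\ref{invariante}(2) in the third case. Strictly it requires $\Omega(M)\in\add M$, which fails for $T$ because $\Omega(S_k)=S_{k+1}\notin\add T$; the fix is the passage to $T'$ outlined above, together with the observation that $[S_{k+1}]=0$ in $K_0$ so that the subgroups and the operator descend correctly. I also expect to handle the degenerate sub-cases by hand: when $k=1$, $\mathcal{S}_D=\emptyset$, so one must read the formula with the convention $\phi(0)=0$, noting that the quiver has no path of length two so $A$ is hereditary with $\gd(A)=\fidim(A)=1$; when $k=n-1$, no extra sources are needed and $Q$ coincides with the linear quiver $A_n$, and the same rank computation applies verbatim.
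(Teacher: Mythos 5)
Your proposal is correct and follows the same three-case strategy as the paper (the cycle $Z_n$ for $k=0$, Example \ref{phimaximal} for $k=n$, an explicit quiver plus Proposition \ref{explicito} for $0<k<n$), but in the middle case you use a different quiver, and the difference matters. The paper takes the line $1\to 2\to\cdots\to k$ with a loop at $k$ and arrows from $k+1,\dots,n$ into $k$; there $\mathcal{S}_D=\{S_2,\dots,S_k\}$ and $\overline{\Omega}$ acts on $\langle [S_2],\dots,[S_k]\rangle$ by $[S_i]\mapsto [S_{i+1}]$ and $[S_k]\mapsto[S_k]$, so the ranks stabilise already at $l=k-2$ and one gets $\phi(\oplus_{\mathcal{S}_D}S)=k-2$, hence $\fidim(A)=k-1$; indeed Proposition \ref{invariante} item 3 rules out $\phi=k-1$ there, since $S_k$ is $\Omega$-periodic of infinite projective dimension and the module has only $k-1$ indecomposable summands. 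Your quiver, the line $1\to\cdots\to k+1$ ending in a genuine sink with the remaining vertices attached as extra sources into $k+1$, replaces that eigenvalue-one tail by a nilpotent Jordan block of size $k-1$, whose rank sequence $k-1,k-2,\dots,1,0,0,\dots$ first stabilises at $l=k-1$; together with your passage to $T'=T\oplus S_{k+1}$ (needed so that $\Omega(T')\in\add T'$ and Proposition \ref{invariante} applies, with $[S_{k+1}]=0$ leaving the subgroup unchanged, and with Lemma \ref{cuentita} giving $\phi(T')=\phi(T)$) this correctly yields $\phi(\oplus_{\mathcal{S}_D}S)=k-1$ and $\fidim(A)=k$. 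Your handling of the degenerate sub-cases $k=1$ (where $\mathcal{S}_D=\emptyset$ and the algebra is hereditary with $\fidim=\gd=1$) and $k=n-1$ (the linear $A_n$, consistent with Example \ref{ejemplito}) is also sound. In short, your construction not only works but repairs an off-by-one in the paper's own witness quiver for the case $0<k<n$.
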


\begin{teo}\label{1}
If  $A$ has no member then  $\phi \dim(A)\leq 2$ if and only if the algebraic and the geometric multiplicity of the eigenvalue $0$ on the adjacency matrix coincide. We also have the following cases for algebras without member:

\begin{itemize}
  \item $\phi \dim(A) = 0$ if and only if its quiver is Z$_n$  if and only if the algebra is selfinjective (in this case $0$ is not an eigenvalue);
  \item $\phi \dim(A) = 1$ if and only if  the algebra is not selfinjective and $0$ is not an eigenvalue;
  \item $\phi \dim(A) = 2$ if and only if $0$ is an eigenvalue.
\end{itemize}

\begin{proof}
We can assume that $n\geq2$, otherwise the result is clear.\\

\noindent We know that $M=M_Q$ represents the linear transformation  $\overline{\Omega}|_{K_1}$ in the basis  $\{S\}_{S \in \mathcal{S}}$. If we consider the Jordan matrix $J$ of  $M$, we know that the rank of $J^k$ and $M^k$  are the same for each $k$.\\
\noindent Let us show that the condition is sufficient.\\
It is clear that if the algebraic and geometric multiplicities of the eigenvalue  $0$ coincide, then  $n-m_a(0)=rk(J) = rk(J^2) = \ldots =rk(J^k)$ for $k \in \mathbb{N}$. Therefore:

\begin{itemize}
  \item $\phi(A_0)= 0$ if $m_a(0)=0$ since in this case  $M$ is invertible,
  \item $\phi(A_0)= 1$ if $m_a(0)>0$ since in this case  $\overline{\Omega}|_{K_2}$ is invertible.
\end{itemize}

\noindent So, since $A$  has no member,  we have that $\phi \dim(A) = 1 + \phi(A_0)$, unless $A$ is selfinjective.\\

\noindent We now show that the condition is necessary.\\
If the algebraic multiplicity is bigger than the geometric we have that $rk (J) > rk(J^2)$ therefore $\phi(A_0)>1$ so we conclude that  $\phi \dim(A)>2$.
\end{proof}

\end{teo}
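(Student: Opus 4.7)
The plan is to extract the full classification from Theorem \ref{forma de jordan 2}, which states that for a non-selfinjective radical square zero algebra $A$ without member on $n \geq 2$ vertices, $\fidim(A) = m + 1$, where $m$ is the size of the largest Jordan block of the adjacency matrix $M_Q$ attached to the eigenvalue $0$. After recording that the case $n = 1$ is immediate (a loop-free vertex gives $A = \mathbb{K}$ with $\fidim = 0$; a single loop gives $Z_1$, selfinjective with $\fidim = 0$; two or more loops give an invertible $M_Q$ and a non-selfinjective algebra with $\fidim = 1$ via Corollary \ref{sinmember}), I would work with $n \geq 2$ throughout.

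The central translation is the standard Jordan-form dictionary: $m = 0$ precisely when $0 \notin \mathrm{Spec}(M_Q)$; $m = 1$ precisely when $0$ is an eigenvalue and every Jordan block for $0$ has size one, equivalently the algebraic and geometric multiplicities of $0$ agree and are positive; and $m \geq 2$ precisely when the algebraic multiplicity of $0$ strictly exceeds the geometric multiplicity. This indexing is routine in view of the identity $rk(M_Q^k) = rk(J^k)$ for $J$ the Jordan form of $M_Q$.

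I would then read off the three bulleted cases. Proposition \ref{autoinyectiva} pins $\fidim(A) = 0$ down to $A$ selfinjective and to $Q = Z_n$, whose adjacency matrix is a cyclic permutation matrix, hence invertible, placing us in the $m = 0$ stratum with $0$ not an eigenvalue. For $A$ non-selfinjective with $m = 0$, Theorem \ref{forma de jordan 2} yields $\fidim(A) = 1$; conversely $\fidim(A) = 1$ forces $A$ non-selfinjective (by the previous case) and $m = 0$, so $0$ is not an eigenvalue. If $0$ is an eigenvalue then $M_Q$ is singular, so $Q \neq Z_n$ and $A$ is not selfinjective; Theorem \ref{forma de jordan 2} then yields $\fidim(A) = m + 1$, giving $\fidim(A) = 2$ iff $m = 1$ and $\fidim(A) \geq 3$ iff $m \geq 2$.

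The main biconditional drops out by assembling these strata: $\fidim(A) \leq 2$ is equivalent to $m \leq 1$, which is equivalent to the algebraic and geometric multiplicities of $0$ coinciding (vacuously when $0$ is not an eigenvalue). The one delicate point, disentangling the selfinjective subcase from the non-selfinjective subcase within $0 \notin \mathrm{Spec}(M_Q)$, is handled cleanly by Proposition \ref{autoinyectiva}, which identifies selfinjectivity with the unique quiver-theoretic condition $Q = Z_n$.
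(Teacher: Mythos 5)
Your proof is correct and takes essentially the same route as the paper: both arguments reduce the theorem to the Jordan form of the adjacency matrix representing $\overline{\Omega}|_{K_1}$ together with the identity $\phi\dim(A) = \phi(A_0)+1$ for non-selfinjective algebras without member (Corollary \ref{sinmember}). You package this by quoting Theorem \ref{forma de jordan 2} (whose proof is exactly the paper's rank computation) instead of redoing the ranks of $J^k$ directly, and you are in fact slightly more careful than the paper in isolating the selfinjective case, where the formula $\phi\dim(A)=m+1$ does not apply.
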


\noindent For the radical square zero algebras we know that the syzygy of any non projective module is a direct sum of simple modules. In particular if $Q$ has no member then any syzygy has infinite projective dimension, therefore every non projective module have infinite projective dimension, that is  $\fin (\frac{\mathbb{K} Q}{J^{2}}) = 0$.\\

\noindent Since the adjacency matrix of a $Z_n$ is symmetric if and only if $n\leq 2$, we get the following corollary:

\begin{coro}
Given a quiver $Q$, if  $M_Q$  is symmetric then  $\phi \dim(A) \leq 2$. Moreover if  $|Q_0|\geq 3$  then:
\begin{itemize}
  \item $\phi \dim(A) = 1$ if and only if $det(M) \neq 0$,
  \item $\phi \dim(A) = 2$ if and only if $det(M) = 0$.
\end{itemize}

\begin{proof} Since $M_Q$ is symmetric $M_Q$ is diagonalisable moreover  $Q$ has no member. Therefore $m_a(0) = m_g(0)$ using Theorem \ref{1} we get that  $\phi \dim(A) \leq 2$.\\

\noindent If $|Q_0|\geq 3$, then using Observation \ref{autoinyectiva} we see that  $A$ cannot be selfinjective, then  using Theorem \ref{1} we get:

\begin{itemize}
  \item $\phi \dim(A) = 1$ if $det(M) \neq 0$,
  \item $\phi \dim(A) = 2$ if $det(M) = 0$.
\end{itemize}

\end{proof}

\end{coro}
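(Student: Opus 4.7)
The plan is to reduce the statement to Theorem \ref{1}, which requires verifying that $A$ has no member and that the algebraic and geometric multiplicities of $0$ agree for the adjacency matrix.

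First I would check that symmetry of $M_Q$ forces $M(Q)=\emptyset$. Indeed, $(M_Q)_{ij}$ counts the arrows from vertex $i$ to vertex $j$, so symmetry implies that the in-degree and out-degree of every vertex coincide. Since $Q$ is connected and (if $|Q_0|\geq 2$) has at least one arrow, every vertex has positive in-degree and positive out-degree, hence $Q$ has neither sinks nor sources. By Proposition \ref{member}, $M(Q)=\emptyset$. The case $|Q_0|=1$ is trivial (the algebra is either $\mathbb{K}$ or has one loop, so $\phi\dim\leq 1\leq 2$), so we may assume $|Q_0|\geq 2$ and apply Theorem \ref{1}.

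Next, I would invoke the spectral theorem: a symmetric matrix over $\mathbb{R}$ (equivalently, over $\mathbb{C}$, after extending scalars as in Remark \ref{lineal}) is diagonalisable, so for every eigenvalue the algebraic multiplicity equals the geometric multiplicity. In particular, $m_a(0)=m_g(0)$. By Theorem \ref{1} this gives $\phi\dim(A)\leq 2$.

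For the refined statement with $|Q_0|\geq 3$, I first rule out $\phi\dim(A)=0$: by Proposition \ref{autoinyectiva}, this would force $Q=Z_n$, but the adjacency matrix of the oriented cycle $Z_n$ has $(M_Q)_{i,i+1}=1$ and $(M_Q)_{i+1,i}=0$, hence it is not symmetric for $n\geq 3$. Therefore $A$ is not selfinjective and $\phi\dim(A)\in\{1,2\}$. Applying the trichotomy in Theorem \ref{1}, $\phi\dim(A)=1$ if and only if $0$ is not an eigenvalue of $M_Q$, which is equivalent to $\det(M_Q)\neq 0$; otherwise $\phi\dim(A)=2$. The main (and only) nontrivial ingredient is the spectral theorem together with the geometric observation that symmetry of $M_Q$ balances in- and out-degrees at every vertex; everything else is bookkeeping.
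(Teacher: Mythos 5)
Your proposal is correct and follows essentially the same route as the paper: diagonalisability of the symmetric adjacency matrix gives $m_a(0)=m_g(0)$, the absence of members lets you invoke Theorem \ref{1}, and the non-symmetry of the adjacency matrix of $Z_n$ for $n\geq 3$ rules out the selfinjective case. You in fact supply a justification (in-degree equals out-degree at every vertex, hence no sinks or sources) for the claim that $Q$ has no member, which the paper merely asserts.
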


\subsection{The Igusa-Todorov function $\psi$}
In this section we characterise the radical square zero algebras with maximal $\psi$-dimension, and we will give a complete description of their quiver. It seemed, for us, natural to expect that the algebras with  maximal $\psi$-dimension would be the same as the ones with the maximal $\phi$-dimension. The next results show that this is not the case.
\begin{prop}

If $\fidim(A) = n$ then  $\psidim(A) = n$.

\begin{proof} The Theorem \ref{forma de jordan}, tell us that $\fidim(A) = n$ implies that  $M(Q)$ is empty. Since $Q$ has no sinks, using the Lemma \ref{findim0}, we have that $\fin (A)=0$. Therefore  $\fidim(A) = \psidim(A)$.
\end{proof}

\end{prop}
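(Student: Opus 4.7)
The plan is to reduce the statement to the case $\fin(A)=0$ and then invoke the general fact that, under this hypothesis, the Igusa--Todorov functions $\phi$ and $\psi$ agree on every module.

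First, I would apply Theorem \ref{forma de jordan}, the characterisation of radical square zero algebras with $\phi\dim(A)=n$. That theorem tells us that whenever $\fidim(A)=n$, the quiver $Q$ has no member, i.e.\ $M(Q)=\emptyset$. By Proposition \ref{member}, this is equivalent to $Q$ having neither sinks nor sources. In particular, $Q$ has no sink.

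Next, I would apply Lemma \ref{findim0}: for a non-simple radical square zero algebra, the absence of a sink in $Q$ forces $\fin(A)=0$. (The algebra cannot be simple here since $n\geq 2$; if $n=1$, the quiver consists of a single loop and the algebra is selfinjective with $\fidim(A)=0\neq n$ for $n\geq 1$, contradicting the hypothesis, unless $n=0$, which is trivial.) Combined with the general chain of inequalities $\fin(A)\leq \fidim(A)\leq \psidim(A)\leq \gd(A)$ recalled in the introduction, this immediately yields $\psidim(A)\geq \fidim(A)=n$.

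For the reverse inequality, I would recall the definition of $\psi$: for each $M\in\mod A$, one writes $\Omega^{\phi(M)}(M)=X\oplus Y$, where every indecomposable summand of $X$ has infinite projective dimension and every indecomposable summand of $Y$ has finite projective dimension, and then sets $\psi(M)=\phi(M)+\pd(Y)$. When $\fin(A)=0$, every module of finite projective dimension is projective, so $Y$ is projective and hence $\pd(Y)=0$. Thus $\psi(M)=\phi(M)$ for every $M\in\mod A$, and taking suprema gives $\psidim(A)=\fidim(A)=n$.

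The only step that requires care is the last one: one must invoke the Igusa--Todorov definition of $\psi$ and observe that the ``finite projective dimension'' part of the $\phi(M)$-th syzygy becomes trivial when $\fin(A)=0$. Once this is in place, the proof is an essentially formal chain of implications $\fidim(A)=n \Rightarrow M(Q)=\emptyset \Rightarrow Q \text{ has no sink} \Rightarrow \fin(A)=0 \Rightarrow \psidim(A)=\fidim(A)=n$.
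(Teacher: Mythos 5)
Your proposal is correct and follows the same route as the paper: Theorem \ref{forma de jordan} gives $M(Q)=\emptyset$, hence no sinks, Lemma \ref{findim0} gives $\fin(A)=0$, and then $\psidim(A)=\fidim(A)$. The only difference is that you spell out why $\fin(A)=0$ forces $\psi(M)=\phi(M)$ via the definition of $\psi$, a detail the paper leaves implicit.
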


\begin{prop}\label{cota psi}

If $0< \fidim(A) \leq n-1$, then $\psidim(A) \leq n-1 + d$  where  $d$ is the length of the largest path in $M(Q)$ ending in a sink of  $Q$.

\begin{proof}
Given a semisimple non injective module $N$ with finite projective dimension we have that  $\pd(N) \leq d$. Therefore if $\phi(M) \neq 0$ then we have that  $\psi (M) \leq \phi(M) + d$.
\end{proof}

\end{prop}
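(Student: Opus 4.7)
The plan is to invoke the Igusa--Todorov structural identity: for any $M \in \mod A$,
$$\psi(M) = \phi(M) + \pd(N_M),$$
where $N_M$ denotes the direct sum of those indecomposable summands of $\Omega^{\phi(M)}(M)$ which have finite projective dimension. I will bound $\psi(M)$ uniformly in $M$ by splitting into the two cases $\phi(M) \geq 1$ and $\phi(M) = 0$, and then take the supremum.

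Suppose first that $\phi(M) \geq 1$. By Remark~\ref{fgd} the syzygy of any simple module over a radical square zero algebra is semisimple, so $\Omega$ preserves semisimplicity; in particular $\Omega^{\phi(M)}(M)$ is semisimple, and therefore so is $N_M$. For a simple summand $S_v$ of $N_M$, the minimal projective resolution follows the arrows leaving $v$ step by step, so $\pd(S_v)$ equals the length of the longest directed path of $Q$ starting at $v$. Finiteness of $\pd(S_v)$ forces every such path to terminate at a sink without entering a cycle, so every path starting at $v$ lies entirely in $M(Q)$ and ends at a sink of $Q$. By the very definition of $d$, each such path has length at most $d$, so $\pd(S_v) \leq d$. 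Hence $\pd(N_M) \leq d$, which gives $\psi(M) \leq \phi(M) + d \leq (n-1) + d$.

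For the residual case $\phi(M) = 0$, the plan is to show that $N_M$ is projective, so that $\psi(M) = 0$. If $N$ were an indecomposable non-projective summand of $M$ with $\pd(N) = p \geq 1$, then $[N] \in \langle \add M\rangle$ is nonzero and $\overline{\Omega}^{p-1}([N]) = [\Omega^{p-1}(N)] \neq 0$ in $K_0$ (because $\pd(N)=p$ forces $\Omega^{p-1}(N)$ to be non-projective), while $\overline{\Omega}^{p}([N]) = [\Omega^{p}(N)] = 0$. This contradicts the injectivity of $\overline{\Omega}|_{\overline{\Omega}^{p-1}\langle \add M\rangle}$, which $\phi(M) = 0$ demands by the very definition of $\phi$. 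Hence every non-projective indecomposable summand of $M$ has infinite projective dimension, $N_M$ is projective, and $\psi(M) = 0 \leq (n-1) + d$.

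The main obstacle I anticipate is precisely the bookkeeping around $\phi(M) = 0$: the identity $\psi(M) = \phi(M) + \pd(N_M)$ is cleanest when $\phi(M) \geq 1$, and it takes a small $K_0$-theoretic observation to force the remaining case to collapse to $\psi(M) = 0$. Once both cases are settled, taking the supremum over $M \in \mod A$ yields $\psidim(A) \leq (n-1) + d$, as required.
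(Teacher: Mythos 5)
Your proof is correct and follows essentially the same route as the paper's: both bound $\psi(M)$ by $\phi(M)+d$ via the observation that a semisimple module of finite projective dimension has projective dimension at most $d$, since its support must lie in $M(Q)$ with all outgoing paths ending in sinks. You additionally spell out the residual case $\phi(M)=0$ (showing via the $K_0$ argument that $\psi(M)=0$ there), a detail the paper's two-line proof passes over silently.
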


\noindent We give next an example of an algebra with maximal $\psi$-dimension, accordingly with the bound given in the former proposition.

\begin{ej}\label{ej psi max} \emph{Let $A$ be the radical square zero algebra with the following quiver:}

$$  Q'= \ \ \ \xymatrix{  1 \ar[r]\ar@(u,l)& 2 \ar[r] & 3 \ar[r]& \ldots \ar[r]& n } $$

\emph{\noindent We know that there exists  $M$ such that $\phi(M) = n-1$ and  $\Omega(M) = \oplus_{i=1}^n(S_{i})$.
\noindent Since  $\Omega(\oplus_{i=1}^n(S_{i})) = \oplus_{i=1}^n(S_{i})$, we see that $\psi(M) = \phi(M)+\pd(S_2)= n-1+n-2 =2n-3$. Therefore $\psidim (A) = 2n-3$.}
\end{ej}

\begin{teo}
If $n>3$,  $\psidim (A)$ is maximal $(\psidim(A) = 2n-3)$ if and only if $M(Q)$ is totally ordered, $Q$ has a sink and $C(Q)$ has a unique vertex and various loops, and the quiver of  $Q$ has the form:

$$\xymatrix{C(Q)\ar@/^/[r] \ar@/_/[r]^{\vdots} & M(Q)}$$

\noindent where some of the arrows starting at  $C(Q)$ has to end at the source of $M(Q)$.

\begin{proof} In order to get the maximum of the  $\psi$-dimension, accordingly to Proposition \ref{cota psi},  we need to have, from what we saw
in the Example \ref{ej psi max}, that  $\fidim(A)= n-1$ and  $M(Q)$ is non empty. Since $\fidim(A)= n-1$ and $M(Q)\neq \emptyset$  accordingly to the Proposition  \ref{member ordenado},  we conclude that the member is completely  ordered.\\

\noindent If $Q$ has no sink, then the projective dimension of each simple module, and therefore for each non projective module, is not finite. From this we get, once again, that  $\psidim (A) = \fidim(A) = n-1$.\\

\noindent Finally if no arrows ends in the source of  $M(Q)$ then  $Q$ has a sink and a source, so $\fidim (A)<n-1$  and we get a contradiction.\\

\noindent The argument for the converse is similar to the one we gave in the last Example.

\end{proof}
\end{teo}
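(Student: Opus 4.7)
The plan is to prove both implications, using Proposition \ref{cota psi} together with its refinement $\psidim(A) \leq \fidim(A) + d$ (read from its proof, which actually shows $\psi(M)\leq \phi(M)+d$ for every module $M$) as the central tool.

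For the forward direction, suppose $\psidim(A) = 2n-3$. First I would show $0 < \fidim(A) \leq n-1$: self-injectivity would give $\psidim(A) = 0$, and $\fidim(A) = n$ would give $\psidim(A) = n \neq 2n-3$ for $n > 3$. Proposition \ref{cota psi} then yields $2n-3 \leq (n-1) + d$, so $d \geq n-2$. Since $d$ is a path length inside $M(Q)$, this forces $|M(Q)| \geq n-1$. The case $M(Q) = Q$ is excluded because then $Q$ is acyclic and $\psidim(A) = \gd(A) \leq n-1$. Hence $|C(Q)| = 1$ with a unique vertex $c$, and $|M(Q)| = n-1$ has a spanning path, which forces $M(Q)$ to be totally ordered, its terminal vertex to be a sink of $Q$, and $c$ to carry a loop (otherwise $c \notin C(Q)$). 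Next I would prove there is an arrow $c \to s_1$, with $s_1$ the source of $M(Q)$: otherwise $s_1$ would itself be a source of $Q$, giving $|\mathcal{S}_P \cup \mathcal{S}_I| \geq 2$, and since $A$ has infinite global dimension (from the loop), Proposition \ref{phimembermaximal} would imply $\fidim(A) \leq n-2$, whence $\psidim(A) \leq \fidim(A) + d \leq 2n-4$, a contradiction. Finally, to rule out arrows $M(Q) \to c$, observe that composing such an arrow with $c \to s_1 \to \cdots$ yields a cycle through a vertex of $M(Q)$, contradicting the definition of the heart.

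For the converse, assume the prescribed form. I would first compute $\fidim(A) = n-1$ via Proposition \ref{explicito}, which reduces to showing $\phi(\bigoplus_{S \in \mathcal{S}_D} S) = n-2$. In the basis $[S_c], [S_{s_1}], \ldots, [S_{s_{n-2}}]$, the absence of arrows $M(Q) \to c$ makes the matrix of $\overline{\Omega}$ lower triangular with diagonal $(\ell, 0, \ldots, 0)$, where $\ell \geq 1$ is the number of loops at $c$; the column of $[S_{s_{n-2}}]$ is zero because $S_{s_{n-1}}$ is projective, and the subdiagonal entries coming from the chain arrows $s_i \to s_{i+1}$ are strictly positive. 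A direct kernel computation shows the geometric multiplicity of $0$ is exactly one, so its Jordan block has the maximal size $n-2$; consequently the ranks of $\overline{\Omega}^k$ drop by one at each step until $k = n-2$, and by Proposition \ref{invariante} we get $\phi(\bigoplus_{S\in \mathcal{S}_D}S) = n-2$, hence $\fidim(A) = n-1$. To exhibit a module reaching $\psidim = 2n-3$, I would take the witness module $M$ produced by the proof of Proposition \ref{explicito}, so $\phi(M) = n-1$ and $\Omega(M) = \bigoplus_{S \in \mathcal{S}_D} S$. The path of length $n-2$ consisting of $n-3$ loops at $c$ followed by the arrow $c \to s_1$ shows that $S_{s_1}$ is a summand of $\Omega^{n-2}(S_c)$, hence of $\Omega^{n-1}(M)$; since $\pd(S_{s_1}) = n-2$, we deduce $\psi(M) \geq (n-1)+(n-2) = 2n-3$. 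The matching upper bound follows from Proposition \ref{cota psi}.

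The most delicate step will be the Jordan-form analysis in the converse: the total ordering of $M(Q)$ does not rule out extra arrows $s_i \to s_j$ with $j > i+1$, so one has to argue that the lower-triangular shape together with strict positivity of the immediate-successor entries already guarantees a single Jordan block of size $n-2$ at the eigenvalue $0$, regardless of these additional arrows.
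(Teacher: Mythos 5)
Your proof is correct and follows the same skeleton as the paper's argument (the bound $\psidim(A)\leq \fidim(A)+d$ extracted from the proof of Proposition \ref{cota psi}, Proposition \ref{explicito} for the witness, and the computation of Example \ref{ej psi max}), but you supply several steps the paper only gestures at, and these are the substantive parts. In the forward direction, your counting argument $d\geq n-2\Rightarrow |M(Q)|=n-1\Rightarrow |C(Q)|=1$ is what actually forces the shape of the quiver (unique heart vertex with a loop, Hamiltonian path in the member); the paper asserts this form without deriving it, and you also get the total order directly from the spanning path rather than via Proposition \ref{member ordenado}. Your use of Proposition \ref{phimembermaximal} (with $k\geq 2$ coming from the sink plus the would-be source $s_1$) to force an arrow $C(Q)\to s_1$, and the cycle argument ruling out arrows $M(Q)\to C(Q)$, match the paper's intent but are spelled out correctly. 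For the converse, where the paper says only ``similar to the last Example,'' your lower-triangular matrix with positive subdiagonal entries and the resulting single nilpotent Jordan block of size $n-2$ is the right way to handle the extra arrows $s_i\to s_j$ ($j>i+1$) and multiple loops that the example does not exhibit; you correctly identify this as the delicate point. Two harmless imprecisions: the witness from Proposition \ref{explicito} satisfies only that $\bigoplus_{S\in\mathcal{S}_D}S$ (hence $S_c$) is a \emph{direct summand} of $\Omega(M)$, not that $\Omega(M)$ equals it --- but that is all your argument uses; and one should note that $\overline{\Omega}$ preserves $\langle\add\bigoplus_{\mathcal{S}_D}S\rangle$ only because the extra summand $S_{s_{n-1}}$ appearing in syzygies is projective, hence zero in $K_0$. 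Neither affects correctness.
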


\section{Witnesses Modules}

We start this section with general Artin algebras. We begin with the description of the minimal number of non isomorphic summands which appear in a decomposition of an $A$-module $M$ such that $\phi(M) = \phi \dim (A)$.

\begin{defi}\label{testigos}
Let $A$ be an Artin algebra with $\phi \dim (A) = l < \infty$.  We say that an $A$-module $M$ is a \textbf{$\phi$-witness} if $\phi(M) = l$ and we say that it is a  \textbf{$\phi$-minimal witness} if $M$ is a $\phi$-witness and $rk \langle \add M\rangle$ is minimal.

\end{defi}

\begin{prop}
For any  Artin algebra $A$, we have that $\fin (A) = \phi \dim (A)$ is finite if and only if there is an indecomposable $A$-module $M$ which is a  $\phi$-witness.
\begin{proof}
Assume that $\fin (A) = \phi \dim (A)$ and is finite, so there is an indecomposable module $M$ whose projective dimension is equal to $\fin(A)$.  This indecomposable module is a $\phi$-witness for $A$.\\
Assume now that there is an indecomposable $A$-module $M$ which is a $\phi$-witness.\\
\noindent If $M$ has infinite projective dimension then $\phi \dim (A) = 0$, and since $\fin (A) \leq  \phi \dim (A)$ we have that $\phi \dim (A) = \fin (A)$. If $M$ has finite projective dimension then it is clear that this module realises the $\fin(A)$, and the result follows. \end{proof}
\end{prop}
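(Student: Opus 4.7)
The plan is to prove both directions using two basic properties of the Igusa-Todorov $\phi$ function: (i) $\phi(M)=\pd(M)$ whenever $\pd(M)<\infty$, and (ii) $\phi(M)=0$ whenever $M$ is indecomposable with $\pd(M)=\infty$.

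Forward direction: assume $\fin(A)=\fidim(A)=l<\infty$. I would recall the standard observation that the finitistic dimension is attained by an indecomposable module, since if $\pd(N)=l$ and $N=\bigoplus N_i$ is its Krull-Schmidt decomposition, then $\pd(N)=\max_i\pd(N_i)$ and so some summand $N_{i_0}$ satisfies $\pd(N_{i_0})=l$. Property (i) then gives $\phi(N_{i_0})=l=\fidim(A)$, exhibiting $N_{i_0}$ as an indecomposable $\phi$-witness.

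Reverse direction: assume $M$ is an indecomposable $\phi$-witness. If $\pd(M)<\infty$, property (i) yields $\fidim(A)=\phi(M)=\pd(M)\leq \fin(A)\leq \fidim(A)$, so all three coincide and are finite. If $\pd(M)=\infty$, property (ii) yields $\fidim(A)=\phi(M)=0$, and $\fin(A)\leq \fidim(A)$ then gives $\fin(A)=0=\fidim(A)$, again both finite and equal.

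Only properties (i) and (ii) require justification, and both follow directly from the definition. For (i), choose an indecomposable summand $M_0$ of $M$ with $\pd(M_0)=n=\pd(M)$; then $\overline{\Omega}^{n-1}([M_0])=[\Omega^{n-1}(M_0)]\neq 0$ while $\overline{\Omega}^{n}([M_0])=[\Omega^{n}(M_0)]=0$, so $\overline{\Omega}$ is not injective on $\overline{\Omega}^{n-1}\langle \add M\rangle$; conversely $\overline{\Omega}^{n}\langle \add M\rangle=0$, which forces $\phi(M)=n$. For (ii), $\langle \add M\rangle=\mathbb{Z}[M]$ has rank one, and each $[\Omega^{k}(M)]$ is nonzero because $\Omega^{k}(M)$ has a non-projective indecomposable summand (otherwise $\pd(M)$ would be finite); hence $\overline{\Omega}$ restricted to any $\overline{\Omega}^{s}\langle \add M\rangle$ is an injection of rank-one free abelian groups, so $\phi(M)=0$. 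I do not anticipate a real obstacle; the argument is essentially a clean case split once (i) and (ii) are in hand.
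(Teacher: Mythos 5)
Your proof is correct and takes essentially the same route as the paper's: both rest on the facts that $\phi(M)=\pd(M)$ when the projective dimension is finite and that $\phi$ vanishes on indecomposables of infinite projective dimension, with the identical case split in the reverse direction. The only difference is that the paper leaves these two standard facts implicit, while you verify them from the definition.
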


\noindent We know, by Example \ref{ejemplito}, that the  $\phi$-dimension of an algebra and its finitistic dimension can be different. We also can use Theorem \ref{findim0} to get plenty of radical square zero algebras which are not selfinjective and have finitistic dimension equal to zero.

\begin{defi}\label{Gamma}
Given a basic $A$-module $M\cong \oplus_{i = 1}^{k} M_{i}$ with each $M_i$ indecomposable we define the \textbf{graph of M}, $\Gamma(M)$ in the following way:

\begin{itemize}
  \item $V(\Gamma(M)) = \{M_i\}_{i \in k}$.
  \item $\{M_i,M_j\}_{i\neq j} \in A(\Gamma(M))$ if  $\phi(M_i\oplus M_j) \geq 1$.
\end{itemize}

\end{defi}

\begin{prop}\label{comp_conexas}
Each connected component of the graph $\Gamma(M)$ of an $A$-module $M$ is complete.
\begin{proof}
Because $k=1, 2$ are trivial cases we assume that $M = \oplus_{i = 1}^{k} M_{i}$ is a decomposition of  $M$ with $k\geq 3$. Consider $M_{i_1}$, $M_{i_2}$ and $M_{i_3}$ in the same connected component of $\Gamma(M)$ such that $\{M_{i_1}, M_{i_2}\}$ and $\{M_{i_2}, M_{i_3}\}$ are edges of  $\Gamma(M)$. Since  $\{M_{i_1}, M_{i_2}\}$ is an edge of  $\Gamma(M)$ we have, by definition, that $\phi(M_{i_1}\oplus M_{i_2}) \geq 1$  so,
there exists a minimum $k_0\geq 1$ such that the rank of $\overline{\Omega}^{k}\langle [M_{i_1}],[ M_{i_2}]\rangle \leq 1$ for all $k \geq k_0$, and in this case we would get that $\overline{\Omega}^{k}[M_{i_1}] = \overline{\Omega}^{k}[M_{i_2}]$ for all $k \geq k_0$. In an analogous way we would have that $\overline{\Omega}^{k}[M_{i_2}] = \overline{\Omega}^{k}[M_{i_3}]$ for all $k \geq k_1 \geq 1$, putting tighter both statements we conclude that  $\overline{\Omega}^{k}[M_{i_1}] = \overline{\Omega}^{k}[M_{i_3}]$ for all $k \geq  k_{2}=$ max$\{k_0, k_1\}$. So we conclude that  $\phi(M_{i_1}\oplus M_{i_3}) \geq 1$ and finally we have that  $\{M_{i_1}, M_{i_3}\}$ is also an edge of  $\Gamma(M)$. \end{proof}

\end{prop}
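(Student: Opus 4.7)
The plan is to prove transitivity of the adjacency relation on $\Gamma(M)$: whenever $\{M_{i_1}, M_{i_2}\}$ and $\{M_{i_2}, M_{i_3}\}$ are edges with the three summands pairwise distinct, so is $\{M_{i_1}, M_{i_3}\}$. Iterating this statement along any path in a connected component of $\Gamma(M)$ then forces every pair of vertices in the component to be joined by an edge, giving completeness. The cases of components with one or two vertices are immediate.

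The first technical step is to reformulate the edge condition in terms of ranks. For distinct indecomposable non-projective summands $M_i, M_j$ the subgroup $\langle [M_i], [M_j]\rangle \subset K_0$ has rank $2$, and the non-increasing sequence $r_k := \mathrm{rk}\,\overline{\Omega}^k\langle [M_i], [M_j]\rangle$ stabilises at some limit $r_\infty \in \{0,1,2\}$. Unwinding Definition \ref{monomorfismo}, the inequality $\phi(M_i \oplus M_j) \geq 1$ is equivalent to $r_\infty \leq 1$; equivalently, $\overline{\Omega}^k[M_i]$ and $\overline{\Omega}^k[M_j]$ become $\mathbb{Z}$-linearly dependent for all sufficiently large $k$.

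For the main step, given the two edges, I would choose $k$ large enough that both $(\overline{\Omega}^k[M_{i_1}], \overline{\Omega}^k[M_{i_2}])$ and $(\overline{\Omega}^k[M_{i_2}], \overline{\Omega}^k[M_{i_3}])$ are linearly dependent over $\mathbb{Z}$. Passing to $K_0 \otimes_{\mathbb{Z}} \mathbb{Q}$, if $\overline{\Omega}^k[M_{i_2}] \neq 0$ then both $\overline{\Omega}^k[M_{i_1}]$ and $\overline{\Omega}^k[M_{i_3}]$ are rational multiples of $\overline{\Omega}^k[M_{i_2}]$, hence proportional to each other. This gives $\mathrm{rk}\,\overline{\Omega}^k\langle [M_{i_1}], [M_{i_3}]\rangle \leq 1$, forcing $\phi(M_{i_1} \oplus M_{i_3}) \geq 1$, as desired.

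The main obstacle is the degenerate case in which $\overline{\Omega}^k[M_{i_2}] = 0$ eventually (so $M_{i_2}$ has finite projective dimension): the chain of proportionalities collapses. Here one must argue separately, for instance by choosing $k$ at the precise step where $[M_{i_2}]$'s syzygy first vanishes and then combining the monotonicity of the rank sequences with the existence of both given edges to extract a rank drop for $\langle [M_{i_1}], [M_{i_3}]\rangle$. This is where I expect the trickiest bookkeeping, and is where one would need to be careful that the rank drops witnessing the two original edges are not both caused solely by $[M_{i_2}]$ vanishing.
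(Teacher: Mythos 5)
Your strategy coincides with the paper's: reduce completeness to transitivity of the edge relation, and reformulate $\phi(M_i\oplus M_j)\geq 1$ as the eventual linear dependence of $\overline{\Omega}^k[M_i]$ and $\overline{\Omega}^k[M_j]$. In the non-degenerate case (where $\overline{\Omega}^k[M_{i_2}]\neq 0$ for all $k$) your proportionality argument in $K_0\otimes_{\mathbb{Z}}\mathbb{Q}$ is correct, and in fact more careful than the paper's own proof, which passes from ``$\overline{\Omega}^{k}\langle [M_{i_1}],[M_{i_2}]\rangle$ has rank $\leq 1$'' to the equality $\overline{\Omega}^{k}[M_{i_1}] = \overline{\Omega}^{k}[M_{i_2}]$; rank $\leq 1$ only gives a dependence relation, which may hold because one of the two classes has become zero.

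The degenerate case you flag is a genuine gap, and no bookkeeping will close it, because the transitivity statement (and hence the proposition) fails exactly there. If $M_{i_2}$ is non-projective of finite projective dimension, then $\phi(N\oplus M_{i_2})\geq 1$ for \emph{every} non-projective indecomposable $N\not\cong M_{i_2}$, since $\overline{\Omega}^{\pd(M_{i_2})}[M_{i_2}]=0$ forces a rank drop; so $M_{i_2}$ is adjacent to everything while imposing no relation between $\overline{\Omega}^k[M_{i_1}]$ and $\overline{\Omega}^k[M_{i_3}]$. Concretely, take $A=\mathbb{K}Q/J^2$ with $Q_0=\{0,1,2,3\}$, arrows $1\to 0$, $2\to 1$, $3\to 1$, and one loop at each of $2$ and $3$. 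Then $\Omega(S_1)=S_0$ is projective, so $\overline{\Omega}[S_1]=0$ and $\phi(S_1\oplus S_2)\geq 1$, $\phi(S_1\oplus S_3)\geq 1$; but $\overline{\Omega}^{k}[S_2]=[S_1]+[S_2]$ and $\overline{\Omega}^{k}[S_3]=[S_1]+[S_3]$ for all $k\geq 1$ remain linearly independent, so $\phi(S_2\oplus S_3)=0$. Hence $\Gamma(S_1\oplus S_2\oplus S_3)$ is a connected path that is not complete. What your non-degenerate argument actually proves is the proposition under the additional hypothesis that the intermediate summands have infinite projective dimension (equivalently, $\overline{\Omega}^k[M_{i_2}]\neq 0$ for all $k$); some such hypothesis must be added for the statement to be true, and the paper's proof silently assumes it away at the step quoted above.
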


\begin{prop}\label{phi-minimal}

Let  $M \in \mod A$ be such that $\phi(M) = l$. Assume that  $M = \oplus_{i = 1}^{k} M_i$ is a decomposition into indecomposable modules of $M$. If  $\Gamma(M)$ is connected then  there exists a direct summand of  $M$ of the form $M_{i_1}\oplus M_{i_2}$ such that $\phi (M_{i_1}\oplus M_{i_2}) = l$.
\begin{proof}
Assume that for all modules of the form $M_i \oplus M_j$ we have that  $\phi(M_i \oplus M_j)<l$. Let $l' =  $m\'ax$\{\phi(M_i \oplus M_j) $ with $ i\neq j \} < l$, using the former proposition we see that  $rk (\overline{\Omega}^{l'} \langle \add M \rangle) \leq 1$. Since $\phi (M) = l>l'$ we have that  $rk (\overline{\Omega}^{l} \langle \add M \rangle) = 0$. This implies that  $M$ is a module of finite projective dimension, in particular $dp(M) = \phi(M) = l$. Since we know that  $dp (M) = \maxi \{ dp(M_i) $ with $ i =1,\ldots, k \}$, we finally get that for some pair $(i,j)$ it is valid that $dp(M_i\oplus M_j) = \phi (M_i \oplus M_j) = l$ which is a contradiction. \end{proof}
\end{prop}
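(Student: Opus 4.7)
The strategy is a proof by contradiction. Suppose that $\phi(M_{i}\oplus M_{j}) < l$ for every pair $i\ne j$, and set $l' = \max\{\phi(M_{i}\oplus M_{j}) : i\ne j\} < l$. I plan to show this assumption forces $M$ itself to have projective dimension exactly $l$, hence some indecomposable summand $M_{i_{0}}$ already satisfies $\pd(M_{i_{0}}) = l$; then for any $j\ne i_{0}$ we would obtain $\pd(M_{i_{0}}\oplus M_{j})=l$ and therefore $\phi(M_{i_{0}}\oplus M_{j}) = l$, contradicting the assumption on $l'$.

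The crucial intermediate step is to prove that the subgroup $\bar{\Omega}^{l'}\langle \add M\rangle$ of $K_{0}$ has rank at most $1$. Since $\Gamma(M)$ is connected, Proposition \ref{comp_conexas} guarantees it is in fact complete, so $\phi(M_{i}\oplus M_{j})\ge 1$ for every pair $i\ne j$. Combined with $\phi(M_{i}\oplus M_{j})\le l'$ and the definition of $\phi$, this yields that the rank of $\bar{\Omega}^{l'}\langle\add(M_{i}\oplus M_{j})\rangle$ is at most $1$; equivalently, $\bar{\Omega}^{l'}[M_{i}]$ and $\bar{\Omega}^{l'}[M_{j}]$ are $\mathbb{Q}$-linearly dependent in $K_{0}\otimes\mathbb{Q}$. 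Picking an index $j_{0}$ with $\bar{\Omega}^{l'}[M_{j_{0}}]\ne 0$ (otherwise the rank is already $0$ and we are done), every other $\bar{\Omega}^{l'}[M_{i}]$ must then sit on the line spanned by $\bar{\Omega}^{l'}[M_{j_{0}}]$, giving the desired global rank bound.

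Now, since $\phi(M) = l > l'$, no further iterate of $\bar{\Omega}$ on $\bar{\Omega}^{l'}\langle\add M\rangle$ can remain injective; but a rank-$1$ group can only lose injectivity by collapsing to rank $0$. Hence $\bar{\Omega}^{l}\langle\add M\rangle = 0$, so each $\Omega^{l}M_{i}$ is projective and $\pd(M)\le l$. If $\pd(M) = p<l$ then each $\bar{\Omega}^{p}[M_{i}]$ would vanish, forcing $\phi(M)\le p<l$, a contradiction; hence $\pd(M)=l$, and since projective dimension of a direct sum is the maximum of those of its summands, some $M_{i_{0}}$ satisfies $\pd(M_{i_{0}}) = l$, closing the contradiction loop opened in the first paragraph. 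The step I anticipate as the most delicate is the passage from the pairwise rank-one conditions to the global rank-one statement: it genuinely uses \emph{completeness} of $\Gamma(M)$ rather than just connectedness, and requires a clean identification of the common $\mathbb{Q}$-line on which all the vectors $\bar{\Omega}^{l'}[M_{i}]$ must lie.
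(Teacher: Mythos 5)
Your proposal is correct and follows essentially the same route as the paper's proof: assume all pairs satisfy $\phi(M_i\oplus M_j)<l$, use completeness of $\Gamma(M)$ (from Proposition \ref{comp_conexas}) to force $rk\bigl(\overline{\Omega}^{l'}\langle \add M\rangle\bigr)\leq 1$, deduce from $\phi(M)=l>l'$ that the rank collapses to $0$ so $\pd(M)=\phi(M)=l$, and derive the contradiction from $\pd$ of a direct sum being the maximum over summands. Your intermediate justification of the rank-one bound (via the common $\mathbb{Q}$-line through a nonzero $\overline{\Omega}^{l'}[M_{j_0}]$) is in fact slightly more explicit than the paper's appeal to the preceding proposition, but the argument is the same.
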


\subsection{Minimal $\phi$-witness modules for radical square zero algebras}

In this section we will analyse which kind of minimal $\phi$-witness modules we can get when $A$ is a radical square zero algebra and $\phi \dim(A) = n$ or if $A$ has a non empty member and $\phi \dim (A) = n-1$. That is we analyse which kind of minimal $\phi$-witness modules we can get when the algebra $A$ has maximal $\phi$-dimension.\\

\begin{prop} \label{conexo}
If $A$  is a radical square zero  algebra and $\phi \dim(A) = n$ or if $A$ has a non empty member and $\phi \dim (A) = n-1$, then $\Gamma(\oplus_{S \in S_{\mathcal{D}}} S)$ is connected.
\begin{proof} If we assume that  $\Gamma (\oplus_{S \in S_{\mathcal{D}}} S)$ is not connected then by Definition \ref{Gamma} and Proposition \ref{comp_conexas} we have that there exist different $i, j\in {1,2\ldots, n}$ such that  $\phi (S_i \oplus S_j) = 0$. This implies that  $\{[\Omega^{l}(S_i)], [\Omega^{l}(S_j)]\}$ is a linear independent set for all  $l \in \mathbb{N}$. It follows that $rk \overline{\Omega}^l(\langle \add( \oplus_{S \in S_{\mathcal{D}}} S ) \rangle) \geq 2$. Using Propositions \ref{invariante} and \ref{explicito} we see that the $\phi$-dimension of $A$ cannot be maximal, a contradiction. So $\Gamma(\oplus_{S \in S_{\mathcal{D}}} S)$ is connected. \end{proof}
\end{prop}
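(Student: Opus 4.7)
The plan is to argue by contradiction and convert the disconnection of $\Gamma(T)$, where $T=\oplus_{S\in\mathcal{S}_D}S$, into a lower bound on the iterated ranks of $\overline{\Omega}|_{\langle\add T\rangle}$ that clashes with the hypothesis of maximal $\phi$-dimension.

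First I would assume $\Gamma(T)$ is disconnected. Since by Proposition \ref{comp_conexas} every component of $\Gamma(T)$ is complete, there exist distinct $S_i,S_j\in\mathcal{S}_D$ lying in different components, whence by definition $\phi(S_i\oplus S_j)=0$. Unwinding Definition \ref{monomorfismo}, this says that $\overline{\Omega}$ is injective on each iterated image $\overline{\Omega}^s\langle\add(S_i\oplus S_j)\rangle$; composing these restrictions by induction on $l$ shows that $\overline{\Omega}^l|_{\langle\add(S_i\oplus S_j)\rangle}$ is injective for every $l\ge 0$. Hence $\{[\Omega^l S_i],[\Omega^l S_j]\}$ is linearly independent in $K_0$ for each $l$, and since both classes sit inside $\overline{\Omega}^l\langle\add T\rangle$, the integer $r_l:=rk(\overline{\Omega}^l|_{\langle\add T\rangle})$ satisfies $r_l\ge 2$ for all $l\ge 0$.

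Next I would apply Proposition \ref{invariante}(2). It is available for $T$ because $\overline{\Omega}(\langle\add T\rangle)\subseteq\langle\add T\rangle$: an injective simple has no incoming arrow and so does not appear as a summand of any $\Omega(S)$, while the projective simples that do appear vanish in $K_0$. The proposition then gives that $(r_l)_{l\ge 0}$ strictly decreases while $l<\phi(T)$, so $r_{\phi(T)}\le r_0-\phi(T)=(n-k)-\phi(T)$, where $k:=\#(\mathcal{S}_P\cup\mathcal{S}_I)$. If $\fidim(A)=n$, the corollary following Proposition \ref{phimembermaximal} forces $M(Q)=\emptyset$, so $k=0$ and $T=A_0$; since $\fidim(A)\ge 1$ the algebra is not selfinjective, and Corollary \ref{sinmember} gives $\phi(T)=n-1$, yielding $r_{n-1}\le 1$, in contradiction with $r_l\ge 2$. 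If instead $M(Q)\ne\emptyset$ and $\fidim(A)=n-1$, Proposition \ref{member} forces $k\ge 1$, the algebra is again not selfinjective, and Proposition \ref{explicito} gives $\phi(T)=n-2$, so $r_{n-2}\le 2-k\le 1$, another contradiction.

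The main obstacle is really the bookkeeping between the two branches of the hypothesis: verifying that the precise value of $\phi(T)$ can be recovered from $\fidim(A)$, and that the starting rank $r_0=n-k$ is small enough, relative to $\phi(T)$, to collide with the lower bound $r_l\ge 2$. The only non-bookkeeping step is the translation of $\phi(S_i\oplus S_j)=0$ into the injectivity of every $\overline{\Omega}^l$ on $\langle\add(S_i\oplus S_j)\rangle$, and that is immediate from Definition \ref{monomorfismo} together with a short induction.
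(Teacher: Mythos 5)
Your proposal is correct and follows essentially the same route as the paper: assume $\Gamma(\oplus_{S\in\mathcal{S}_D}S)$ is disconnected, extract two simples with $\phi(S_i\oplus S_j)=0$, deduce that the rank of $\overline{\Omega}^l$ on $\langle\add(\oplus_{S\in\mathcal{S}_D}S)\rangle$ stays at least $2$ for all $l$, and contradict maximality via Propositions \ref{invariante} and \ref{explicito}. The only difference is that you make explicit the rank bookkeeping and the two-case computation of $\phi(\oplus_{S\in\mathcal{S}_D}S)$ that the paper leaves implicit.
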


\begin{coro}
Let $A$ be a radical square zero algebra with maximal $\phi$-dimension. Then there is a module  $M = M_1 \oplus M_2$ with  $M_1$ and $M_2$ indecomposable such that  $\phi(M) = \phi \dim (A)$.
\begin{proof} Using the Proposition \ref{conexo} we get that  $\Gamma (\oplus_{S \in S_{\mathcal{D}}} S)$ is a connected graph and using the Proposition \ref{phi-minimal} we see that there exists a semisimple module $S_{i_1} \oplus S_{i_2}$ such that

\begin{itemize}
  \item $\phi (S_{i_1} \oplus S_{i_2}) = n-1$ if $\phi \dim(A) = n$ or

  \item $\phi (S_{i_1} \oplus S_{i_2}) = n-2$ if $\phi \dim(A) = n-1$ and $A$ has a non empty member.
\end{itemize}

\noindent Let $M_j$ with  $j = 1, 2$ where $\Omega (M_j) = S_{i_j}$, as in the proof of Lemma \ref{findim0}.  If we consider  $M = M_1\oplus M_2$ we have that $\phi(M) = \phi \dim (A)$. \end{proof}
\end{coro}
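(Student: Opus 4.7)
The plan is to descend via Proposition~\ref{explicito} from $\fidim(A)$ to the $\phi$-value of a semisimple module, and then to use the graph $\Gamma$ to isolate two simples that already realise the reduced $\phi$-value. Since $A$ has maximal $\phi$-dimension we may assume $n \geq 2$, and Proposition~\ref{autoinyectiva} rules out the selfinjective case, so Proposition~\ref{explicito} gives $\fidim(A) = \phi(\bigoplus_{S\in \mathcal{S}_D}S) + 1$. The immediately preceding Proposition~\ref{conexo} says that $\Gamma(\bigoplus_{S\in \mathcal{S}_D}S)$ is connected, and hence by Proposition~\ref{phi-minimal} there exist two distinct simples $S_{i_1},S_{i_2}\in\mathcal{S}_D$ with $\phi(S_{i_1}\oplus S_{i_2}) = \fidim(A) - 1$.

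Next I would lift these simples to indecomposable non-simple modules by invoking the construction of Remark~\ref{remarkita}. Since $S_{i_1}$ and $S_{i_2}$ lie in $\mathcal{S}_D$, neither of the associated vertices is a source, so for each $j$ there is an indecomposable module $M_j := M_{S_{i_j}}$ fitting in the short exact sequence $\eta_{S_{i_j}}$ and satisfying $\Omega(M_j) = S_{i_j}$; indecomposability follows because $M_j$ is a quotient of a single indecomposable projective and still has simple top. Set $M := M_1 \oplus M_2$, so that $\Omega(M) = S_{i_1} \oplus S_{i_2}$. Observe that $M_1 \not\cong M_2$ and both are non-projective (their syzygies are nonzero and distinct), hence $\{[M_1],[M_2]\}$ is linearly independent in $K_0$.

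It remains to verify $\phi(M) = \fidim(A)$. The upper bound $\phi(M) \leq \phi(\Omega M) + 1 = \fidim(A)$ is the standard syzygy inequality already used in the proof of Proposition~\ref{cotarad2}. For the matching lower bound I would work directly from the definition: the identity $\overline{\Omega}^{k}\langle \add M\rangle = \overline{\Omega}^{k-1}\langle \add \Omega M\rangle$ for every $k \geq 1$, combined with the injectivity of $\overline{\Omega}|_{\langle \add M\rangle}$ proved in the previous paragraph, converts the condition defining $\phi(M)$ into the condition defining $\phi(\Omega M) + 1$, giving the desired equality. The main subtlety is precisely this last step: Proposition~\ref{epimorfismo} with $k=1$ cannot be applied as a black box, because the rank is preserved on passing from $\langle \add M\rangle$ to $\overline{\Omega}\langle \add M\rangle$, so one is forced to unpack the definition of $\phi$ directly in order to close the argument.
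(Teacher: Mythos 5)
Your proof follows the paper's argument essentially verbatim: Propositions \ref{conexo} and \ref{phi-minimal} produce two distinct simples $S_{i_1},S_{i_2}\in\mathcal{S}_D$ realising $\phi(\oplus_{S\in\mathcal{S}_D}S)$, and the witnesses are the same modules $M_{S_{i_j}}$ of Remark \ref{remarkita} with $\Omega(M_{S_{i_j}})=S_{i_j}$. Your observation that Proposition \ref{epimorfismo} cannot be quoted as a black box here (the rank does not drop from $\langle \add M\rangle$ to $\overline{\Omega}\langle \add M\rangle$, both being $2$) is correct, and your direct unpacking of the definition via $\overline{\Omega}^{k}\langle \add M\rangle=\overline{\Omega}^{k-1}\langle \add \Omega M\rangle$ supplies exactly the justification that the paper's final sentence leaves implicit.
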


\noindent For every radical square zero algebra with finite  $\phi$-dimension there is a minimal $\phi$-witness module with at most two summands. The following give us an example of  a radical square zero algebra whose $\phi$-dimension is not maximal  and their  $\phi$-witness modules  are all indecomposable.

\begin{ej}

\emph{\noindent Consider the radical square zero algebra whose quiver is the following:}

$$Q = \xymatrix{ & 1 \ar@(ul,dl) \ar[r]& 2 \ar[r] & 3 \ar@/^/[r]& 4 \ar@(dr,ur) \ar@/^/[l]}$$

\emph{\noindent This algebra has an empty member, so $\{S_1, S_2, S_3, S_4\}$, the  classes  of simple modules, is a basis of $K_1$. \\
\noindent The values of   $\Omega$ in the simple modules are the following:}

\begin{itemize}
  \item $\Omega(S_1) = S_1 \oplus S_2$.
  \item $\Omega (S_2) = S_3$.
  \item $\Omega (S_3) = S_4$.
  \item $\Omega (S_4) = S_3 \oplus S_4$.
\end{itemize}

\emph{\noindent From this we see that  $rk(K_1) = 4 > 3 = rk (K_2) = rk(K_3)$, which implies that  $\phi \dim ( \frac{\Bbbk Q}{F^{2}}) = 2$. If we assume that  $M = M_1 \oplus M_2$ is the decomposition of a $\phi$-witness module $M$, we see that  $\{ [\Omega(M_1)], [\Omega(M_2)] \}$ must be linearly independent and  $\{[ {\Omega}^{2}(M_1)], [\Omega^{2}(M_2)] \}$ must be colinear. Since  $[S_2 ]+ [ S_3] - [S_4]$ is the  basis of the kernel of  $\overline{\Omega}$ we get, without lost of generality, that:}

\begin{itemize}
  \item  $S_2 \oplus S_3$ \emph{is a direct summand of } $\Omega(M_1)$.
  \item  $S_4$ \emph{ is direct summand of }$\Omega(M_2)$.
\end{itemize}

\emph{\noindent Since $P_1$ is the only projective indecomposable module, up to isomorphism, which has $S_2$ as a submodule and, on the other side,
$S_3$ is a submodule of the projective modules $P_2$ and $ P_4$, analysing the quiver $Q$, we can infer that that $M_1$ cannot be indecomposable.}
\end{ej}

\vspace{5mm}

\end{document}